\newtheorem{theorem}{Theorem}[section]
\newtheorem*{theorem*}{Theorem}
\newtheorem{lemma}[theorem]{Lemma}
\newtheorem{corollary}[theorem]{Corollary}
\newtheorem{conjecture}[theorem]{Conjecture}
\theoremstyle{definition}
\newtheorem{definition}[theorem]{Definition}
\newcommand{\R}{\mathbb{R}}
\newcommand{\Z}{\mathbb{Z}}
\newcommand{\Q}{\mathbb{Q}}
\newcommand{\namedref}[2]{\hyperref[#2]{#1~\ref*{#2}}}
\newcommand{\figref}[1]{\namedref{Figure}{#1}}
\newcommand{\thmref}[1]{\namedref{Theorem}{#1}}
\newcommand{\lemref}[1]{\namedref{Lemma}{#1}}
\newcommand{\corref}[1]{\namedref{Corollary}{#1}}
\newcommand{\conjref}[1]{\namedref{Conjecture}{#1}}
\newcommand{\defref}[1]{\namedref{Definition}{#1}}
\title[Failure of context-freeness of quasigeodesic languages]{Quasigeodesic languages are not context-free in some non-hyperbolic groups}
\author{Arya Saranathan}
\begin{document}

\begin{abstract}

    We study the full language of quasigeodesics in Cayley graphs with fixed error constants. We show that, given a non-virtually-cyclic nilpotent group or Baumslag--Solitar group, and any finite generating set, such languages fail to be context-free for sufficiently large error constants. In fact, this conclusion holds for any finitely generated group which contains one of these groups as an undistorted subgroup. This strengthens a recent theorem of Hughes, Nairne, and Spriano, who showed that such languages fail to be regular in any non-hyperbolic group, for sufficiently large error constants.
\end{abstract}
\maketitle

\section{Introduction}
Given a group $G$ and finite generating set $S$, we can form its Cayley graph $\Gamma(G,S)$, which has a vertex for every group element, and edges labelled by generators. 
In this paper, we are interested in the quasigeodesics of $\Gamma(G,S)$, which are shortest paths in $\Gamma(G,S)$ up to some fixed additive and multiplicative error. In particular, since edges are labelled by generators from $S$, every path corresponds to a word in the generators $S$, and we consider the formal language of words described by the quasigeodesics (with fixed error constants). Geodesic and quasigeodesic languages have been studied extensively. For example, 
Groves showed that there is no regular set of geodesic normal forms 
for words in $BS(1,n)$ \cite{groves}, and Elder has shown that the 
geodesic language of $BS(1,2)$ in the standard generating set is not a counter language \cite{ElderCounterBS}.
Moreover, it was shown by Cleary, Elder, and Taback that the full language of geodesics of the lamplighter group 
is context-free, with respect to the standard wreath-product generating set \cite{ClearyElderTaback}. It happens that the algorithmic properties of these quasigeodesic languages are closely connected to the geometry of the group in a striking way. The bridge is the fundamentally geometric notion of a hyperbolic group, originally introduced by Gromov in \cite{Gromov1987}; a recurring theme is that the class of hyperbolic groups seems to draw a boundary between `tame' groups with simple quasigeodesic languages, and `wild' groups with complex quasigeodesic languages. To make this precise, the following definition was introduced in \cite{HoltRees2003} and is of particular interest to this paper.

\begin{definition}\label{def:qreg}
    A finitely generated group $G$ is $\Q$\textbf{REG} if for all rational
    $\lambda \ge 1$, real $\epsilon \ge 0$ and finite generating sets $S$, the $(\lambda, \epsilon)$-quasigeodesics in
    the Cayley graph $\Gamma(G, S)$ form a regular language.
\end{definition} 

Holt and Rees showed in \cite{HoltRees2003} that hyperbolic groups are $\Q$\textbf{REG}. Some intuition for this result comes from the Morse lemma for hyperbolic groups. Quasigeodesics in the Cayley graph between two elements of a hyperbolic group fellow-travel, so an automaton recognizing the quasigeodesic language need only check that an input word does not deviate from a given quasigeodesic by too much.
Remarkably, Hughes, Nairne, and Spriano proved the converse in \cite{Hughes_2025},
establishing that the regularity of rational quasigeodesic languages
\textit{characterizes} hyperbolicity.

\begin{theorem}[\cite{Hughes_2025, HoltRees2003}]\label{thm:HNS}
    A finitely generated group is hyperbolic if and only if it is $\Q$\emph{\textbf{REG}}.
\end{theorem}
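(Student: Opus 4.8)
Theorem~\ref{thm:HNS} is the conjunction of two implications, established in \cite{HoltRees2003} and \cite{Hughes_2025}, and I would attack each separately. For \emph{hyperbolic $\Rightarrow$ $\Q$\textbf{REG}}, fix $\lambda$, $\epsilon$ and a finite generating set $S$, and let $\delta$ bound the thinness of geodesic triangles in $\Gamma(G,S)$. The plan is to build a finite-state automaton accepting exactly the $(\lambda,\epsilon)$-quasigeodesic words $w = s_1\cdots s_n$. The geometric inputs are the Morse lemma, which gives $R = R(\lambda,\epsilon,\delta)$ so that every $(\lambda,\epsilon)$-quasigeodesic segment lies within Hausdorff distance $R$ of a geodesic with the same endpoints; Cannon's finiteness of cone types, which makes the geodesic language of $G$ regular and equips it with the fellow-traveller property; and the local-to-global stability of quasigeodesics in $\delta$-hyperbolic spaces. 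Reading $w$ from left to right, the automaton would carry a bounded window of recent letters together with a coarse summary of the initial portion of a geodesic from the origin that the path has so far fellow-travelled, and check the linear-progress inequality as it goes; hyperbolicity is precisely what allows the global quasigeodesic condition to be certified from this bounded amount of state. The one delicate point is recognising the \emph{exact} constants $(\lambda,\epsilon)$, not merely comparable ones, which I would handle by additionally tracking, up to a fixed threshold, the running deficit between the length of a prefix and the distance it has travelled.

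For \emph{$\Q$\textbf{REG} $\Rightarrow$ hyperbolic} I would argue contrapositively. If $G$ is not hyperbolic then, by Papasoglu's thin-bigon criterion, the geodesic bigons of $\Gamma(G,S)$ are not uniformly thin: for every $D$ there are two geodesics with common endpoints, one of which reaches distance greater than $D$ from the other. Since a geodesic is in particular a $(\lambda,\epsilon)$-quasigeodesic for every admissible pair of constants, this yields, for each $(\lambda,\epsilon)$, quasigeodesics straying arbitrarily far from geodesics with the same endpoints. The plan is then to convert this family of ``fat'' configurations into a proof that, for a suitable $\lambda$ and all sufficiently large $\epsilon$, the $(\lambda,\epsilon)$-quasigeodesic language is not regular --- hence $G$ is not $\Q$\textbf{REG}. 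The mechanism would be a counting argument at the level of automaton states (equivalently, a pumping-lemma or Myhill--Nerode argument): one arranges a scale-indexed family of ever fatter quasigeodesic words, designed to record how deeply into the fat region the path has travelled, in such a way that any automaton accepting exactly the quasigeodesics would be forced to distinguish unboundedly many prefixes, which no finite automaton can do.

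The hard part is this second implication, and within it the passage from a purely metric defect --- failure of the uniform thinness of bigons, equivalently failure of uniform stability of quasigeodesics --- to a genuinely non-regular combinatorial pattern. The subtlety is that any single fat configuration contributes only finitely many offending words, so the heart of the matter is a uniform, scale-by-scale construction that defeats every finite automaton simultaneously. It is exactly this half of the equivalence that the present paper sharpens, by showing that in non-virtually-cyclic nilpotent groups and in Baumslag--Solitar groups the relevant configurations obstruct not merely regularity but even context-freeness of the quasigeodesic language.
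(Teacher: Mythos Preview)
The paper does not prove Theorem~\ref{thm:HNS} at all: it is stated with citations to \cite{HoltRees2003} (for the forward direction) and \cite{Hughes_2025} (for the converse), and then used as motivation for the $\Q$\textbf{CF} conjecture. There is therefore nothing in the paper to compare your proposal against; the only proof content the paper offers toward this theorem is the informal sentence preceding it about the Morse lemma and fellow-travelling, which matches the intuition you give for the forward implication.

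As for your sketch on its own merits: the outline for hyperbolic $\Rightarrow$ $\Q$\textbf{REG} is in the right spirit and broadly matches how Holt--Rees proceed, though the ``delicate point'' you flag about recognising the \emph{exact} constants $(\lambda,\epsilon)$ rather than merely some uniform slack is indeed where the work lies, and your one-line gesture at ``tracking the running deficit up to a fixed threshold'' does not by itself explain why this bounded state suffices. For the converse, your plan is much vaguer. Invoking Papasoglu's bigon criterion and then asserting that fat bigons can be assembled into a family of words violating regularity is the right shape, but the passage from ``arbitrarily fat bigons exist'' to ``no finite automaton can accept exactly the $(\lambda,\epsilon)$-quasigeodesics'' is precisely the content of \cite{Hughes_2025}, and your sketch does not supply the key construction: one needs quasigeodesic words whose prefixes are pairwise inequivalent under the Myhill--Nerode relation, and producing these uniformly from the metric defect is the whole difficulty. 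You acknowledge this yourself in the final paragraph, so the proposal is best read as an honest outline of where the hard step lies rather than a proof.
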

 
 \thmref{thm:HNS}  lends credence to the idea that hyperbolic groups form a very natural class that can be defined in many different ways. Indeed, the result adds to 
a growing list of diverse characterizations of hyperbolicity: 
thin triangles condition \cite{Gromov1987},
vanishing of $\ell^{\infty}$-cohomology \cite{Gersten1998}, 
all asymptotic cones being $\R$-trees \cite{Gromov1987},
and as strongly geodesically automatic groups \cite{Papasoglu1995}.

In this paper we explore the question of whether a stronger characterization is possible in the same vein. Define the 
analogous property $\Q$\textbf{CF} by replacing `regular' with `context-free' in \defref{def:qreg}. While context-freeness is weaker than regularity, it still places
limits on the memory available to an automaton recognizing the quasigeodesic languages, and would thus impose constraints on the geometry of the group. 
This suggests the following audacious conjecture.

\begin{conjecture}\label{conj:main}
    A finitely generated group is hyperbolic if and only if it is $\Q$\emph{\textbf{CF}}.
\end{conjecture}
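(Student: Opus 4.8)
\emph{The forward direction of \Cref{conj:main} is immediate.} By \Cref{thm:HNS} every hyperbolic group is $\Q$\textbf{REG}, and every regular language is context-free, so hyperbolic $\Rightarrow$ $\Q$\textbf{CF}. All the content is in the converse, equivalently: \emph{a non-hyperbolic group is not $\Q$\textbf{CF}}. I do not expect to prove this in full; the realistic plan is to establish it for a substantial family of the "obstructing" groups — non-virtually-cyclic nilpotent groups, Baumslag--Solitar groups, and any finitely generated group containing one of these as an undistorted subgroup — which already gives strong evidence for the conjecture. Concretely, for each such group I would exhibit a finite generating set $S$ and constants $\lambda,\epsilon$ for which the $(\lambda,\epsilon)$-quasigeodesic language is not context-free; non-context-freeness I would detect via the pumping/Ogden lemma for context-free languages, or equivalently (after intersecting with a bounded regular language) via Ginsburg's criterion that a bounded context-free language has a \emph{stratified}-semilinear Parikh image.

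\emph{The transfer step.} If $H\le G$ is undistorted, choose a generating set of $G$ extending one of $H$; then a word over the $H$-generators is a $G$-quasigeodesic if and only if it is an $H$-quasigeodesic, up to replacing $(\lambda,\epsilon)$ by comparable constants (the multiplicative constant changing by the distortion factor). Hence $L^{G}_{\lambda,\epsilon}\cap S_H^{*}$ is sandwiched between two $H$-quasigeodesic languages with comparable constants. To push non-context-freeness through this sandwich — and to absorb the change of constants and of generating set — I would prove the conclusion for the base groups $H$ in a \emph{robust} form: there is a fixed generating set $S_H$ and a fixed regular language $R\subseteq S_H^{*}$ such that $L^{H}_{\lambda,\epsilon}\cap R$ is, for all $\lambda$ and $\epsilon$ beyond some thresholds, one and the same non-context-free language $\mathcal{L}_0$. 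Then $L^{G}_{\lambda,\epsilon}\cap(S_H^{*}\cap R)$ is forced to equal $\mathcal{L}_0$ for $\lambda,\epsilon$ large, and since $S_H^{*}\cap R$ is regular and context-free languages are closed under intersection with regular languages, $L^{G}_{\lambda,\epsilon}$ cannot be context-free.

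\emph{Producing $\mathcal{L}_0$ for the base groups.} For $BS(m,n)$ the engine is the exponential distortion of the cyclic subgroup $\langle a\rangle$ (for $BS(1,n)$ one has $t^{k}at^{-k}=a^{n^{k}}$, so $\langle a\rangle$ is exponentially distorted and the group fails to be hyperbolic). One builds quasigeodesic words that "spell out integers" along the distorted horocyclic direction while descending the Bass--Serre tree, and designs $R$ so that \emph{being a genuine quasigeodesic within $R$} forces an equality between three unbounded parameters — a non-stratified counting pattern of the type $\{x^{p}y^{q}z^{p}\}$ — which no context-free language supports. For a non-virtually-cyclic nilpotent group the engine is its polynomial-growth geometry: it either contains an undistorted $\Z^{2}$ or, as in the Heisenberg group, has a quadratically distorted central direction, and in either case quasigeodesics can traverse a flat (or the Carnot-cone asymptotics supplied by Pansu's theorem) along a "staircase" whose shape records counting data; intersecting with a suitable $R$ again isolates a non-stratified bounded language. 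In both families the verification has two halves: (i) every word of the chosen family is a $(\lambda,\epsilon)$-quasigeodesic — checked subword by subword using the relevant distance estimates (distortion formulas for $BS$, Pansu/Carnot estimates for nilpotent groups) — and (ii) within $R$ no other words are quasigeodesics, via matching lower bounds on those distances.

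\emph{The main obstacle.} The difficulty is the tension between \emph{richness} and \emph{rigidity}: we need an infinite family of words that genuinely lie in the quasigeodesic language for unboundedly many parameter values, yet are so tightly constrained that no pumping decomposition stays inside the language. In flat (abelian, and more generally nilpotent) Cayley graphs the endpoint efficiency condition alone is hopelessly permissive once $\lambda$ is large — $\ell^{1}$-type metrics admit a wealth of "L-shaped" and oscillating geodesics, and the naive families survive pumping — so the rigidity must be extracted from the \emph{full} requirement that \emph{every} subword is efficient (which forces local efficiency at all scales) together with a carefully engineered regular overlay $R$, and possibly a non-standard generating set. Making the subword-by-subword distance estimates sharp enough to cut out exactly the intended non-stratified pattern, and doing so uniformly over a whole band of error constants so that the transfer step applies, is the delicate heart of the argument; for Baumslag--Solitar groups the distortion supplies rigidity more readily, but one must still control, on every subword, the interaction between the tree/height coordinate and the distorted horocyclic coordinate. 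I expect these uniform estimates, rather than the language-theoretic bookkeeping, to be where the real work lies — and extending the argument past these families (e.g. to arbitrary non-hyperbolic groups) to be genuinely open.
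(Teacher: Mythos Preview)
Your overall plan matches the paper's scope exactly---prove the forward direction trivially, and for the converse establish the partial result for non-virtually-cyclic nilpotent groups, Baumslag--Solitar groups, and groups containing these undistorted. The paper also uses Ogden's lemma as the non-context-freeness engine and a transfer-to-undistorted-subgroups step. But your proposed ``robust form'' for the transfer step is impossible as written, and this is a genuine gap.

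You ask for a regular $R\subseteq S_H^{*}$ such that $L^{H}_{\lambda,\epsilon}\cap R$ equals a \emph{fixed} non-context-free $\mathcal L_0$ for all $\lambda,\epsilon$ beyond some threshold. But every word $w$ is $(\lambda,\epsilon)$-quasigeodesic once $\epsilon\ge|w|$, so $L^{H}_{\lambda,\epsilon}\cap R\to R$ as $\epsilon\to\infty$; hence $\mathcal L_0=R$, which is regular and therefore context-free. Your sandwich $L^{H}_{\lambda_1,\epsilon_1}\subseteq L^{G}_{\lambda,\epsilon}\cap S_H^{*}\subseteq L^{H}_{\lambda_2,\epsilon_2}$ forces you to control arbitrarily large $\epsilon_2$ (determined by the unknown QI constants of $H\hookrightarrow G$), so you cannot escape this by restricting to a bounded range of parameters. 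The paper resolves exactly this difficulty by introducing the \emph{chaotic} property: rather than pinning down $L_{\lambda,\epsilon}\cap R$, one shows that \emph{every} context-free language containing all $(\lambda_0,\epsilon_0)$-quasigeodesics must also contain, for every $(\lambda,\epsilon)$, some word that is \emph{not} $(\lambda,\epsilon)$-quasigeodesic. This reformulation is what makes the inheritance from undistorted subgroups go through (via inverse homomorphisms and intersection with a regular language, using closure properties of CFLs), and it is the key conceptual move you are missing. Concretely, for $\Z^2$ the paper builds a single spiral word $w_q=(b^{-q}a^{2q}b^{q})a^{-q}(b^{q}a^{2q}b^{-q})$ that is $(5,0)$-quasigeodesic, applies Ogden to the central $a^{-q}$ block, and pumps to produce a prefix representing the identity of length $\ge 6q$---so pumping lands \emph{outside} every quasigeodesic language, rather than inside a fixed target $\mathcal L_0$. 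The nilpotent case is handled not via Pansu/Carnot asymptotics but by projecting to the abelianization (always of free rank $\ge 2$ when $G$ is not virtually cyclic), running the $\Z^2$ spiral there, and then bounding $d_S([a^n,b^{n^2}])$ in $G$ via Hall's collecting process and Osin's distortion theorem.
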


Such a strengthening of \thmref{thm:HNS} would be an enthralling surprise, as a linguistic gap would appear: \conjref{conj:main} implies that a finitely generated group that is $\Q$\textbf{CF} is actually $\Q$\textbf{REG}. 
This would be an interesting parallel to the behaviour of the Dehn function, where a similar gap appears: hyperbolic groups 
are precisely those finitely presented groups with linear Dehn function, and any group with subquadratic Dehn function must
be hyperbolic and therefore have linear Dehn function \cite{Olshanskii1991Subquadratic}. 
To show the strength of the conjecture we remark that the behaviour of context-free languages is really not well understood under a change of generating sets. Indeed, even the following significantly weaker statement is currently unknown: if a group admits a finite generating set for which rational quasigeodesic languages in the Cayley graph are context-free, must rational quasigeodesic languages be context-free for any choice of finite generating set? Nevertheless, motivated by these considerations, in this paper we verify \conjref{conj:main} for various classes of groups. First, we prove the following.

\begin{theorem}\label{thm:polygrowth}
    A finitely generated nilpotent group is $\Q$\emph{\textbf{CF}} if and only if it is virtually cyclic.
\end{theorem}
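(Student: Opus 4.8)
\emph{The ``if'' direction} is immediate from \Cref{thm:HNS}: a finitely generated virtually cyclic group is hyperbolic, hence $\Q$\textbf{REG}, and regular languages are context-free, so it is $\Q$\textbf{CF}.

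\emph{The ``only if'' direction.} Suppose $G$ is finitely generated nilpotent but not virtually cyclic; I must exhibit a finite generating set $S$, a rational $\lambda\ge 1$ and a real $\epsilon\ge 0$ making the $(\lambda,\epsilon)$-quasigeodesic language of $\Gamma(G,S)$ non-context-free. First I reduce to the torsion-free case: since $G$ is nilpotent and not virtually cyclic, the free rank of $G^{\mathrm{ab}}$ is at least $2$ (otherwise every lower-central quotient above $\gamma_2$ would be finite and $G$ would be finite-by-cyclic), and a finite-index torsion-free subgroup $G_0$ is again nilpotent, not virtually cyclic, and undistorted in $G$; since failure of $\Q$\textbf{CF} passes to any overgroup containing a group as an undistorted subgroup (the mechanism behind the last sentence of the abstract), it suffices to treat $G_0$. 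So assume $G$ is torsion-free nilpotent, fix a Mal'cev basis adapted to the lower central series, and take $S$ to be that basis, so that $|g|_S \asymp \sum_i |x_i(g)|^{1/w_i}$, where $w_i\ge 1$ is the weight of the $i$-th coordinate and at least two coordinates occur. Take $\lambda$ larger than a constant depending only on the $w_i$ and $\epsilon$ a suitable constant.

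\emph{Non-context-freeness.} The plan is to produce a regular language $R$ for which $L\cap R$ — and hence the quasigeodesic language $L$ — fails to be context-free. The test words are ``commutator/staircase'' words in the lowest-weight generators: when $G$ is virtually abelian these trace out a large two-dimensional flat and match two independent coordinate counts (an ``$a^{i}b^{j}a^{i}b^{j}$''-type pattern); when $G$ has class $\ge 2$ they are rectangular words such as $b^{-n}a^{-n}b^{n}a^{n}$, representing an element of weight $\ge 2$ and therefore of $S$-length only $\asymp n$ although the word has length $4n$, so that the quasigeodesic shape ``records'' a superlinear quantity (the exponent $n^{k}$, some $k\ge 2$). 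Using the weighted length estimate one checks that, for $\lambda$ large, all sub-path quasigeodesic inequalities hold, so these words lie in $L$. Choosing $R$ to expose this structure, one then contradicts context-freeness either by Ogden's lemma — marking positions so that any context-free pump is forced into a rigid, distortion-sensitive block and thereby destroys the superlinear (resp.\ double-matching) relation — or, equivalently, by exhibiting a set of letter-count tuples attached to $L\cap R$ that is not semilinear, contrary to Parikh's theorem.

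\emph{The main obstacle.} I expect the decisive difficulty to be this last step. Quasigeodesic languages with large error constants are ``fat'': an element has many quasigeodesic representatives and nearby elements share families of them, so a naive pump or a naive Parikh computation is defeated — the ambient language absorbs the pumped word, or the count-set fills out to a semilinear one. The real work lies in choosing $R$ (and the marked positions) so that the pump cannot escape the rigid, superlinearly distorted portion, and in verifying that every spanning sub-path inequality survives; this is precisely where non-virtual-cyclicity of $G$ — a genuine flat, or a genuinely superlinear distortion function — must enter, and where the virtually-abelian and class-$\ge 2$ cases must either be unified or run in parallel.
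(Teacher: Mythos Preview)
Your ``if'' direction is fine and matches the paper. The ``only if'' direction is an honest outline that names the right ingredients (Ogden's lemma, commutator words, distortion in the lower central series), and you are candid that the pumping step is the crux. But the specific test words you propose do not carry the argument, and the missing idea is not just a matter of bookkeeping.

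Take the class~$\ge 2$ case with your word $[b^n,a^n]=b^{-n}a^{-n}b^{n}a^{n}$. In the Heisenberg group this represents $t^{n^2}$, so indeed a ``superlinear quantity'' is hidden inside---but its $S$-length is $\asymp n$ while the word length is $4n$, so the length/displacement ratio is \emph{bounded}. Any Ogden pump lands you in another word of the same shape $b^{-i}a^{-j}b^{k}a^{l}$, whose ratio is again bounded; nothing forces a violation of the $(\lambda,\epsilon)$-quasigeodesic inequality. The Parikh route has the same problem: on $R=b^{-*}a^{-*}b^{*}a^{*}$ the quasigeodesic constraint is very slack for large $\lambda$, and there is no evident non-semilinear set to point to. Similarly, your abelian test pattern $a^{i}b^{j}a^{i}b^{j}$ is actually geodesic in $\Z^2$ for all $i,j\ge 0$, and remains so after pumping, so $L\cap R$ is all of $R$.

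The paper's resolution is a genuine construction you are missing. One uses a \emph{spiral} word with \emph{asymmetric} exponents,
\[
  w_n=(b^{-n}a^{n^{2}}b^{n})\,a^{-n}\,(b^{n}a^{n^{2}}b^{-n}),
\]
which is $(\lambda_0,0)$-quasigeodesic for a fixed $\lambda_0$ (checked by projecting to $G_{\mathrm{ab}}$, where it traces a spiral in a $\Z^2$). Marking the central $a^{-n}$ block and applying Ogden forces one pump piece to lie there; pumping then produces a word containing the \emph{lopsided} commutator $[a^{n},b^{n^{2}}]$ as a subword. The point of the asymmetry is that this subword has length $\Theta(n^{2})$ but displacement only $o(n^{2})$: one expands $[b^{n^{2}},a^{n}]$ by Hall's collecting process into basic commutators $u_i^{f_i}$, bounds each exponent $f_i=O(n^{2W_i-1})$ where $W_i$ is the weight, and then applies Osin's distortion theorem to get $d(u_i^{f_i})=O(n^{2-1/W_i})$. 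Summing gives $d([a^{n},b^{n^{2}}])=O(n^{2-1/c})$ for $c$ the nilpotency class. Thus the ratio $|\cdot|/d(\cdot)\to\infty$, and the pumped word fails every fixed $(\lambda,\epsilon)$ bound. No case split between virtually abelian and higher class is needed; the abelianization handles the quasigeodesicity check uniformly, and the $o(n^2)$ bound subsumes the $\Z^2$ case (where the commutator is literally trivial). The asymmetry $n$ versus $n^{2}$ is exactly the device that turns your ``recorded superlinear quantity'' into an \emph{unbounded} length/displacement ratio, which is what Ogden can exploit.
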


This fits with \conjref{conj:main} because the hyperbolic nilpotent groups are exactly the virtually cyclic ones. Secondly, we also verify \conjref{conj:main} for Baumslag--Solitar groups, which are \emph{not} hyperbolic. 

\begin{theorem}
    Baumslag--Solitar groups $BS(m,n)$ are not $\Q$\emph{\textbf{CF}}. 
\end{theorem}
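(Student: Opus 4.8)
The plan is to fix one finite generating set $S$, one rational $\lambda\ge 1$ and one real $\epsilon\ge 0$ for which the language $L$ of $(\lambda,\epsilon)$-quasigeodesics of $\Gamma(BS(m,n),S)$ fails to be context-free; since context-free languages are closed under intersection with regular languages, it is enough to produce a regular $R$ with $L\cap R$ not context-free. I would take $R$ of \emph{bounded} form $u_1^{*}u_2^{*}\cdots u_k^{*}$ and then apply Ginsburg's theorem, by which such a language is context-free precisely when its index set $\{(e_1,\dots,e_k)\in\N^{k}:u_1^{e_1}\cdots u_k^{e_k}\in L\}$ is a finite union of \emph{stratified} linear sets; the target is to arrange that this index set is of the interleaved type $\{(i,j,i,j)\}$ that makes $\{a^{i}b^{j}c^{i}d^{j}\}$ fail to be context-free even though its index set is semilinear.

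First I would clear the degenerate cases. From the isomorphisms $BS(m,n)\cong BS(n,m)\cong BS(-m,-n)$ one may assume $1\le m\le |n|$. If $|m|=|n|$ the group is $\Z^{2}=BS(1,1)$, the Klein bottle group $BS(1,-1)$, or $BS(k,\pm k)$ with $k\ge 2$, each of which contains an undistorted copy of $\Z^{2}$ (itself, a finite-index subgroup, or $\langle a^{k},t\rangle$ resp.\ $\langle a^{k},t^{2}\rangle$, with undistortedness verified against the two retractions $a\mapsto 1,\,t\mapsto 0$ and $a\mapsto 0,\,t\mapsto 1$ onto $\Z$); so \Cref{thm:polygrowth} applied to $\Z^{2}$, together with the passage of failure of $\Q$\textbf{CF} to finitely generated overgroups containing the group undistortedly, disposes of them. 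When $d:=\gcd(m,n)\ge 2$ the subgroup $\langle a^{d},t\rangle$ is isomorphic to $BS(m/d,n/d)$ and has index $d$ in $BS(m,n)$, and when $m\mid n$ the subgroup $\langle a^{m},t\rangle\cong BS(1,n/m)$ sits undistortedly inside; so it remains to treat $BS(1,k)$ with $|k|\ge 2$ and $BS(m,n)$ with $2\le m<|n|$, $\gcd(m,n)=1$.

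For these the driving fact is the superlinear distortion of the horocyclic subgroup $\langle a\rangle$: the identity $t^{\ell}a^{m^{\ell}}t^{-\ell}=a^{n^{\ell}}$ together with base-$|n|$ Horner rewriting of positive powers of $a$ gives $|a^{n^{\ell}}|_{S}$ of size only $O(\ell)$ when $m=1$, and $O\bigl(|n|^{\,\ell\log m/\log|n|}\bigr)=o(|n|^{\ell})$ in general, while on the other hand $|a^{N}|_{S}\gtrsim\log|N|$ for every $S$ (from the affine action of $BS(m,n)$ on $\R$, in which $a$ translates and $t$ scales by $n/m$). Inside a bounded regular $R$ I would place candidate quasigeodesics that ascend $p$ steps in the $t$-direction, deposit a controlled pattern of $a$-letters near the top, descend, and then run a ``collecting'' phase, engineered so that the represented element is $a^{N}$ for an $N$ whose only short representatives must traverse the $t$-direction and so that being a $(\lambda,\epsilon)$-quasigeodesic becomes equivalent to a simultaneous matching of the ascent and descent counts \emph{and} of the deposited and the collected $a$-content — two matchings that interleave rather than nest because the two phases lie on opposite sides of the deposited pattern. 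Then $L\cap R$ has an index set of the non-stratified ``crossing'' type, so $L$ is not context-free. For $2\le m<|n|$, where $\langle a\rangle$ is no longer normal, the ascent/descent and deposit/collect bookkeeping is read off the action of $BS(m,n)$ on its Bass--Serre tree (vertex stabilisers $\cong\Z$) in place of a semidirect-product normal form; the distortion bounds are unchanged.

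The genuine content — and the main obstacle — is the construction and verification of such a witness. One must design $R$ so that $L\cap R$ contains \emph{only} the intended well-matched words, which forces spurious quasigeodesics of other shapes to be excluded and so requires the lower bound $|a^{N}|_{S}\gtrsim\log|N|$ to be sharp exactly where needed; and one must certify that the intended words really are $(\lambda,\epsilon)$-quasigeodesics by checking \emph{every} subpath of each against the geometry of $\Gamma(BS(m,n),S)$, the subpaths that ascend, deposit and then partially descend being the delicate ones. As in the non-regularity theorem of Hughes, Nairne and Spriano, making the argument close forces $\lambda$ and $\epsilon$ into the regime of sufficiently large error constants.
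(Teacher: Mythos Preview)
Your plan diverges from the paper's in two ways. First, the paper does not argue about the actual quasigeodesic language at all: it proves the stronger ``chaotic'' property, which says that \emph{every} context-free language containing the $(\lambda_0,\epsilon_0)$-quasigeodesics must also contain words that are not $(\lambda,\epsilon)$-quasigeodesic for any $(\lambda,\epsilon)$. This is established by an explicit family $w_q$ of spiral-like words (Definition~\ref{def:BSwords}) together with a direct pumping via the corollary of Ogden's lemma, not via Ginsburg's theorem on bounded languages. Second, the paper's hard work is entirely in showing that the $w_q$ are uniformly $(\lambda_0,\epsilon_0)$-quasigeodesic; this is done through the Burillo--Elder normal form and the displacement bound of Lemma~\ref{lem:distbound}, with a case analysis on subwords (Lemma~\ref{lem:constants}). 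Your ascent/deposit/descend/collect pattern is morally the same geometric picture, but you never construct it, and you yourself identify this as ``the genuine content --- and the main obstacle''. As written, the proposal is therefore a strategy rather than a proof.

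There is also a logical gap in your reductions. You dispose of $|m|=|n|$ and of the non-coprime cases by passing to an undistorted (or finite-index) subgroup and invoking ``the passage of failure of $\Q\mathbf{CF}$ to finitely generated overgroups containing the group undistortedly''. But this passage is precisely what is \emph{not} known for the bare not-$\Q\mathbf{CF}$ property: context-free languages do not behave well under change of generating set or block replacement in any obvious way, and the paper explicitly flags this (beginning of \S3) as the reason for introducing chaotic groups. The paper's Theorem~\ref{thm:inherit} gives exactly the inheritance you want, but only for the chaotic property, not for not-$\Q\mathbf{CF}$; so either you must prove a direct inheritance result for not-$\Q\mathbf{CF}$ (which would be of independent interest), or work within the chaotic framework, or abandon the reductions and treat all $BS(m,n)$ uniformly as the paper does. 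Finally, your Ginsburg approach would require showing that $L\cap R$ is \emph{exactly} the crossing set, i.e.\ that no unintended words of $R$ are $(\lambda,\epsilon)$-quasigeodesic; the paper's Ogden approach avoids this, needing only the one-sided inclusion that the $w_q$ are quasigeodesic and that pumping eventually produces a subword representing the identity.
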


In this paper, we do not work directly with the definition of $\Q$\textbf{CF}, as it poses some challenges. 
For example, as mentioned, \emph{a priori} 
there could be a group which has context-free rational quasigeodesic languages for one choice of generating set, but not another. It is also not clear how this property interacts with undistorted subgroups.
To remedy this, we opt to introduce the notion of a \emph{chaotic} group, a direct strengthening of the not-$\Q$\textbf{CF} property. This reframing is important to the proofs of the above theorems, and it ultimately allows us to lift to overgroups in which these groups sit as undistorted subgroups,
significantly expanding the class of groups for which we can 
confirm \conjref{conj:main}. Our key findings can thus be summarised as follows. 

\begin{theorem}
    Let $G$ be a finitely generated group which contains an undistorted 
    subgroup which is either non-virtually-cyclic and nilpotent, or a Baumslag--Solitar group. 
    Then $G$ is chaotic, and in particular not $\Q$\emph{\textbf{CF}}.
\end{theorem}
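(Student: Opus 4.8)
The plan is to isolate a purely structural \textbf{lifting lemma} and then invoke the special cases already established. The lifting lemma asserts: if $H$ is a chaotic group and $H$ embeds as an undistorted subgroup of a finitely generated group $G$, then $G$ is chaotic. Granting this, the theorem is immediate: by \Cref{thm:polygrowth} and its Baumslag--Solitar counterpart stated above -- both of which are proved by exhibiting chaoticity rather than by directly refuting $\Q$\textbf{CF} -- the hypothesised subgroup $H$ is chaotic, so $G$ is chaotic, and chaoticity is by construction a strengthening of the not-$\Q$\textbf{CF} property. So the content is the lifting lemma.

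To prove the lifting lemma, fix any finite generating sets $S_H\subseteq H$ and $S_G\subseteq G$; since chaoticity and undistortedness are both independent of the chosen generating sets, there is no loss. For each $s\in S_H$ choose a word $\phi(s)$ over $S_G$ representing $s$ in $G$, and extend multiplicatively to a monoid homomorphism $\phi\colon S_H^{\ast}\to S_G^{\ast}$. The geometric heart of the argument is a \emph{two-sided quasigeodesic correspondence}:
\begin{enumerate}[label=(\roman*)]
  \item every word over $S_H$ labelling a $(\lambda,\epsilon)$-quasigeodesic of $\Gamma(H,S_H)$ is mapped by $\phi$ to a word labelling a $(C_1\lambda,\ C_1\epsilon+C_2\lambda)$-quasigeodesic of $\Gamma(G,S_G)$, for constants $C_1,C_2$ depending only on the two generating sets and the distortion constants of $H$ in $G$;
  \item conversely, if $w$ is a word over $S_H$ and $\phi(w)$ labels a $(\mu,\delta)$-quasigeodesic of $\Gamma(G,S_G)$, then $w$ itself labels a $(C_1\mu,\delta)$-quasigeodesic of $\Gamma(H,S_H)$.
\end{enumerate}
Both halves are bounded bookkeeping. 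Under $\phi$ the length of any subword changes by at most the multiplicative factor $\max_{s\in S_H}|\phi(s)|_{S_G}$, while the distance between the corresponding endpoints changes by the distortion function, which is linear by hypothesis (for (ii) only the cheap inequality $d_G\le C_1 d_H$ is used; undistortedness enters only in (i)). The one subtlety in (i) is that a subpath of $\phi(w)$ need not be $\phi$ of a subpath of $w$, but differs from one by at most a partial $\phi$-block at each end, costing only a bounded additive correction.

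Now suppose for contradiction that $G$ is $\Q$\textbf{CF}. Fix large $\lambda\ge 1$ and $\epsilon\ge 0$, put $(\mu,\delta)=(C_1\lambda,\ C_1\epsilon+C_2\lambda)$, and let $M=\phi^{-1}\bigl(\mathrm{QG}_{\mu,\delta}(G,S_G)\bigr)$, where $\mathrm{QG}_{\mu,\delta}$ denotes the $(\mu,\delta)$-quasigeodesic language. Since $\mathrm{QG}_{\mu,\delta}(G,S_G)$ is context-free and context-free languages are closed under inverse homomorphism, $M$ is context-free. By (i), $\mathrm{QG}_{\lambda,\epsilon}(H,S_H)\subseteq M$; by (ii), $M\subseteq\mathrm{QG}_{C_1\mu,\delta}(H,S_H)=\mathrm{QG}_{C_1^{2}\lambda,\ C_1\epsilon+C_2\lambda}(H,S_H)$. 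Thus $M$ is a context-free language trapped between two quasigeodesic languages of $H$ whose constants are comparable (bounded ratio) and can be made arbitrarily large. But the defining property of a chaotic group is precisely that no such sandwiched context-free language exists once the constants are large enough -- this is the whole point of the reframing, since the pullback $M$ is only sandwiched between quasigeodesic languages of $H$ rather than equal to one of them, so not-$\Q$\textbf{CF} alone would not close the argument. This contradiction proves the lifting lemma, and hence the theorem.

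The main obstacle is not any single computation but ensuring the notion of chaotic is engineered robustly enough for this argument to run: it must be insensitive both to replacing the quasigeodesic constants by comparable ones (which an undistorted embedding forces on us) and to weakening equality with a quasigeodesic language to a two-sided inclusion. Pinning down a definition that is strong enough to be preserved by the lifting construction, yet still provably satisfied by non-virtually-cyclic nilpotent groups and by Baumslag--Solitar groups, is where the real work lies; once that framework is in place, the lifting lemma is, as sketched, a matter of tracking constants through the substitution homomorphism $\phi$ and invoking closure of the context-free languages under inverse homomorphism.
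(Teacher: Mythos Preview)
Your proposal follows essentially the same architecture as the paper: isolate a lifting lemma (chaoticity passes from an undistorted subgroup to the overgroup) via a block-replacement homomorphism $\phi$ and closure of CFLs under inverse homomorphism, then invoke the nilpotent and Baumslag--Solitar special cases. Your two-sided correspondence (i)--(ii) is exactly the paper's \Cref{lem:block}.

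There is, however, a genuine gap in your proof of the lifting lemma. You state the lemma as ``$G$ is chaotic'', but the argument you give only establishes ``$G$ is not $\Q\textbf{CF}$''. You assume for contradiction that $G$ is $\Q\textbf{CF}$, take $L=\mathrm{QG}_{\mu,\delta}(G,S_G)$ itself, pull back, and sandwich $M=\phi^{-1}(L)$ between two quasigeodesic languages of $H$. The upper inclusion $M\subseteq \mathrm{QG}_{C_1\mu,\delta}(H,S_H)$ depends on $L$ being \emph{contained in} a quasigeodesic language of $G$, which holds for the specific $L$ supplied by your contradiction hypothesis but fails for an \emph{arbitrary} CFL $L$ containing the $(\Lambda_0,E_0)$-quasigeodesics of $G$. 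And it is precisely the arbitrary $L$ that the definition of chaotic obliges you to handle. Deriving a contradiction with ``$G$ is $\Q\textbf{CF}$'' does not yield ``$G$ is chaotic''.

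The repair uses only your own ingredients, but rewired as the paper does in \Cref{thm:inherit}. Given an arbitrary CFL $L$ containing all $(\Lambda_0,E_0)$-quasigeodesics of $G$, set $M=\phi^{-1}(L)$; by (i), $M$ contains all $(\lambda_0,\epsilon_0)$-quasigeodesics of $H$. Now fix arbitrary $(\Lambda,E)$ and let $(\lambda,\epsilon)=(C_1\Lambda,E)$ be the constants produced by (ii). Chaoticity of $H$ gives some $w\in M$ that is not $(\lambda,\epsilon)$-quasigeodesic in $H$; by the \emph{contrapositive} of (ii), $\phi(w)\in L$ is not $(\Lambda,E)$-quasigeodesic in $G$. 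Since $(\Lambda,E)$ was arbitrary, $G$ is chaotic. The sandwich packaging you sketched is a valid shortcut to not-$\Q\textbf{CF}$, but to get the full conclusion you must push a bad word \emph{forward} through $\phi$ rather than rely on an upper inclusion for the pullback.
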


An important tool used to obtain our results is Ogden's lemma from \cite{ogden68}, which provides a strong necessary condition for a language to be context free, and hence a way to prove that a given language is not context free. Then, the general proof strategy is to construct a family of words in a $(\lambda, \epsilon)$-quasigeodesic language of a group, and to argue that this family of words has properties violating the necessary condition given by Ogden's lemma, showing that the $(\lambda, \epsilon)$-quasigeodesic language is not context-free. Along the way we rely on Osin's theorem on subgroup distortion in nilpotent groups \cite{osin}, and later use results by Burillo and Elder \cite{BurilloElder2014} on normal forms and lower bounds on displacement for words in Baumslag–Solitar groups.

\subsection*{Outline}
In \S2 we set up notation and establish a useful corollary of Ogden's lemma, which will be used several times to prove that certain languages are not context-free.
In \S3 we introduce the notion of a \textit{chaotic} group, 
which is a stronger property than not-$\Q$\textbf{CF}, and
is easier to work with. In \S4 we prove that $\Z^2$ is chaotic. We then extend 
the methods of \S4 to show that non-virtually-cyclic virtually nilpotent groups are chaotic, 
in \S5. Finally, in \S6 we use a slightly more involved construction to
show that Baumslag--Solitar groups are chaotic.

\subsection*{Acknowledgments}
This was completed as part of a summer project supervised by 
Joseph Paul MacManus (main supervisor) and Davide Spriano. I am very grateful
for the weekly discussions and 
email exchanges that were crucial to guiding my progress and 
that made this work possible. 
Their role in suggesting problems, elucidating the relevant literature, 
and clarifying my thinking was invaluable. I would like to thank them also for their careful reading of drafts, and feedback that helped improve this paper. The project
was supported by a summer research grant from 
the Mathematical Institute, Oxford.

\section{Preliminaries}

\subsection{Notation and terminology}
Consider a group $G$ with finite generating set $S$. 
In this paper we 
deal exclusively with finitely generated groups and finite generating sets, 
which are also assumed symmetric (closed under inverses).
We may then form the 
\emph{Cayley graph} $\Gamma(G,S)$, which is a 
geometric picture of the group's combinatorial 
structure. It has vertex set $G$, with directed edges $\{g, gs\}$
for all $g\in G$ and $s\in S$. By considering edges to all have unit length, distances in $\Gamma(G,S)$ 
induce a metric $d_S$ on $G$, and in this way $G$ is a discrete metric space.
Left-multiplication by an element of $G$ is an isometry of $\Gamma(G,S)$, 
so $d_S(h,g) = d_S(e, h^{-1}g)$ (here $e\in G$ is the identity element). Therefore, it will be convenient
to treat $d_S$ as a unary operator, and we will write $d_S(g)$ to mean $d_S(e, g)$.
When the generating set $S$ is clear we may write $d = d_S$.

A \emph{word} $w$ in $S$ is an element of $S^*$, i.e. a finite sequence of generators, 
whose length is denoted by $|w|$. We will frequently abuse notation 
and use $w$ to refer also to the corresponding element of $G$ (yielded by 
multiplying together its letters in $G$). A word is understood to trace a discrete path in 
the Cayley graph $\Gamma(G,S)$ starting from the identity $e\in G$, 
which gives rise to the following geometric notions.
A word $w$ is \emph{geodesic} if it describes a shortest path 
in $\Gamma(G,S)$, or equivalently $|w| = d(w)$. We will be 
especially interested in a coarse version of this property. The word 
$w$ is \emph{$(\lambda, \epsilon)$-quasigeodesic} if for any subword $u$ of $w$, 
we have 
\[ |u| \le \lambda d(u) + \epsilon .\]
By `subword' we mean any contiguous subsequence of letters of $w$.
Geodesics and quasigeodesics can be defined in much greater generality, 
but this discrete viewpoint will suffice for our purposes.
The collection of all geodesic words for $(G,S)$ is a subset of $S^*$ 
known as the \emph{geodesic language}, and the collection of all 
$(\lambda, \epsilon)$-quasigeodesics is the \emph{$(\lambda, \epsilon)$-quasigeodesic language}.
Note that these languages depend on the generating set $S$.

We will also need the notion of a quasiisometrically embedded (QI-embedded, or undistorted) subgroup. 
The subgroup $H \le G$ is QI-embedded roughly when distances in $H$ are distorted 
only by a uniform constant compared to distances in $G$. That is, if $T, S$ are generating sets for $H,G$ respectively, 
then we require constants $(A, B)$ such that 
for all $h\in H$,
\[  \tfrac{1}{A}d_T(h)-B \le d_S(h)  \le  Ad_T(h)+B .\]
However, it is straightforward to see that 
changing generating sets only affects
distances by a uniform constant, so the property of QI-embeddedness 
is independent of generating sets. 

\subsection{Context-free languages}

A language $L \subseteq S^*$ is context-free (or is a CFL) if it is the 
language recognized by a nondeterministic pushdown automaton, 
or equivalently if it is the language of a context-free grammar 
(see any textbook on formal languages, for example \cite{SipserTOC3}).

A particularly useful necessary condition for a language to be context-free 
is given by the following theorem of Ogden from \cite{ogden68}.

\begin{theorem}[Ogden's lemma]\label{thm:ogden}
    Let $L$ be a context-free language. Then there exists a constant $p$ 
    such that for any word $w \in L$ with at least $p$ marked positions, 
    there is a decomposition $w = uxzyv$ such that:
    \begin{itemize}
        \item $xzy$ contains at most $p$ marked positions,
        \item $z$ contains at least one marked position,
        \item $u,x$ both contain marked positions, or $y,v$ both contain marked positions, and
        \item for all $n \ge 0$, the word $ux^nzy^nv \in L$.
    \end{itemize}
\end{theorem}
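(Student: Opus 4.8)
The statement is Ogden's lemma, a classical refinement of the Bar-Hillel pumping lemma; the plan is to run the usual pumping argument on a parse tree while following a root-to-leaf path that chases the marked positions. Fix a context-free grammar $G$ for $L$ in Chomsky normal form with $k$ variables, and set $p = 2^{2k+3}$ (the case $\epsilon \in L$ is vacuous here, since any $w$ with at least $p \ge 1$ marked positions is nonempty). Given $w \in L$ with a designated set of at least $p$ marked positions, fix a parse tree $T$ for $w$ and build a root-to-leaf path $\pi$ \emph{greedily}: at each internal node, continue into the child whose subtree contains the most marked leaves, breaking ties arbitrarily. Since the root has at least $p \ge 1$ marked leaves below it, this path ends at a leaf $\ell$ that is itself a marked position. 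Call an internal node of $\pi$ a \emph{branch point} if the child of it that does \emph{not} lie on $\pi$ has a marked leaf in its subtree.

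The first step is a counting bound: the number of marked leaves of $T$ is at most $2^{b}$, where $b$ is the number of branch points on $\pi$. This is an induction down $\pi$ — passing through a non-branch-point does not decrease the count of marked leaves below the current node, while passing through a branch point decreases it by at most a factor of $2$, because the greedy rule keeps the larger of the two halves. Hence $b \ge 2k+3$. Now classify each branch point as \emph{left} or \emph{right} according to whether its off-path child is its left or right child (equivalently, whether its off-path marked leaves sit to the left or to the right of $\ell$). Among the lowest $2k+3$ branch points of $\pi$, at least $k+2$ share a type; say they are left, and list them $L_1, \dots, L_{k+2}$ from top to bottom (the right case is symmetric). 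Since $L_2, \dots, L_{k+2}$ are $k+1$ nodes while $G$ has only $k$ variables, two of them — say $N_1 = L_i$ above $N_2 = L_j$ with $2 \le i < j$ — carry the same variable $A$; set $N_0 = L_1$.

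Finally, read off the decomposition $w = u\,x\,z\,y\,v$ in which $z$ is the yield of the subtree at $N_2$, $xzy$ is the yield of the subtree at $N_1$, and $u$, $v$ are the parts of $w$ lying to the left and right of that subtree. Then $xzy$ contains at most $2^{2k+3} = p$ marked positions, because at most $2k+3$ branch points of $\pi$ lie at or below $N_1$ and the counting bound applies to the portion of $\pi$ from $N_1$ down to $\ell$; the factor $z$ contains the marked position $\ell$; the (off-path, left) child of $N_1$ has a marked leaf, which falls inside $x$; and the (off-path, left) child of $N_0$ has a marked leaf, which lies strictly to the left of the subtree at $N_1$, hence inside $u$ — so $u$ and $x$ both contain marked positions. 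Pumping is then the standard substitution: from $S \Rightarrow^* uAv$, $A \Rightarrow^* xAy$, and $A \Rightarrow^* z$ one obtains $u x^{n} z y^{n} v \in L$ for every $n \ge 0$. The main obstacle is the combinatorial bookkeeping — choosing $p$ (a sufficiently large power of $2$) and selecting $N_0, N_1, N_2$ so that the bound on marked positions in $xzy$, the presence of marked positions in both $u$ and $x$ (or both $y$ and $v$), and the repeated variable label all hold at once; reconciling the left/right typing with the pigeonhole on variables is the delicate point.
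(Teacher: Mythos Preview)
The paper does not prove Ogden's lemma at all: it is stated as \Cref{thm:ogden} with a citation to \cite{ogden68} and then immediately used. So there is no ``paper's own proof'' to compare against; you have supplied a full argument where the paper simply invokes the literature.

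Your argument is correct and is essentially the classical proof. Putting the grammar in Chomsky normal form, following a greedy root-to-leaf path that chases marked positions, bounding the number of marked leaves by $2^{b}$ with $b$ the number of branch points, and then pigeonholing twice (first on left/right type to get $k+2$ same-type branch points among the lowest $2k+3$, then on the $k$ variable labels among $L_{2},\dots,L_{k+2}$ to find the repeated variable) is exactly Ogden's original strategy. The extra node $N_{0}=L_{1}$ is precisely what forces the marked position into $u$ (resp.\ $v$), and your check that the off-path subtree of $N_{1}$ lies entirely inside $x$ because $N_{2}$ sits below the on-path child of $N_{1}$ is the right justification for the marked position in $x$. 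The only cosmetic point is that your bound on $xzy$ is slightly loose: since $L_{1}$ lies strictly above $N_{1}$ and is itself among the lowest $2k+3$ branch points, at most $2k+2$ branch points lie at or below $N_{1}$, giving $\le 2^{2k+2} < p$ marked positions in $xzy$; but $\le p$ is all the statement asks for.
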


We call this $p$ the Ogden constant of $L$.
We will make use of this theorem several times to prove that certain languages are 
not context-free. We record a corollary tailored to our use.

\begin{corollary}\label{cor:ogden}
    Let $L$ be context free with Ogden constant $p$, and 
    $\alpha \beta \gamma \in L$ with $|\beta| \ge p$.
    Then there is a decomposition $\alpha \beta \gamma = uxzyv$ such that:
    \begin{itemize}
        \item At least one of $x$ or $y$ is contained within $\beta$, and
        \item for all $n \ge 0$, the word $ux^nzy^nv \in L$.
    \end{itemize}
\end{corollary}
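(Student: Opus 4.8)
The plan is to feed Ogden's lemma a single natural choice of marked positions, namely all the positions of $\alpha\beta\gamma$ that lie inside the middle block $\beta$. Since $|\beta| \ge p$, this is a set of at least $p$ marked positions, so Ogden's lemma produces a factorisation $\alpha\beta\gamma = uxzyv$ with: $xzy$ containing at most $p$ marked positions, $z$ containing at least one marked position, either $u$ and $x$ both containing a marked position or else $y$ and $v$ both containing one, and $ux^nzy^nv \in L$ for all $n \ge 0$. The pumping conclusion of the corollary is then exactly the last of these, so the only real work is to show that whichever of $x$ or $y$ the lemma hands us (in whichever case applies) lies entirely within $\beta$. Throughout I will identify a factor of $\alpha\beta\gamma$ with the interval of positions it occupies, and recall that the marked positions form precisely the interval $[\,|\alpha|+1,\ |\alpha|+|\beta|\,]$.

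I would then split into the two cases of the third bullet, which are mirror images of each other. Suppose first that $u$ and $x$ both contain a marked position. Since $u$ is a prefix and contains a position of $\beta$, we get $|u| \ge |\alpha|+1$, so $u$ swallows all of $\alpha$ and $x$ begins at a position $\ge |\alpha|+1$; as $x$ itself contains a marked position, its first position is also $\le |\alpha|+|\beta|$. For the right end, note that $z$ lies to the right of $x$ and contains a marked position, say at position $k \le |\alpha|+|\beta|$; since $z$ starts at the position immediately after $x$ ends, the last position of $x$ is $\le k-1 \le |\alpha|+|\beta|$. Hence every position of $x$ lies in $[\,|\alpha|+1,\ |\alpha|+|\beta|\,]$, i.e.\ $x$ is contained in $\beta$. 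The second case — $y$ and $v$ both contain a marked position — is symmetric: $v$ being a suffix that meets $\beta$ forces $v$ to contain all of $\gamma$, so $y$ ends at a position $\le |\alpha|+|\beta|$, while $z$ lying to the left of $y$ and containing a marked position forces $y$ to begin at a position $\ge |\alpha|+1$; so $y$ is contained in $\beta$. Either way one of $x, y$ lies inside $\beta$, completing the proof.

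I do not expect a genuine obstacle here — the argument is entirely index bookkeeping — but the point that needs care is getting the inequalities and their strictness right, so that "contains a marked position" really does pin the relevant factor inside the block $[\,|\alpha|+1,\ |\alpha|+|\beta|\,]$ rather than merely overlapping it; in particular one should check that an endpoint of $x$ (resp.\ $y$) cannot slip one position past the boundary into $\alpha$ or $\gamma$. The degenerate possibilities (some of $u, x, y, v$ empty) cause no trouble: in the applicable case the factor we claim lies in $\beta$ contains a marked position, hence is nonempty, and in any event the empty word lies vacuously inside $\beta$.
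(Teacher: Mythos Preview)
Your proposal is correct and follows essentially the same approach as the paper: mark all positions of $\beta$, apply Ogden's lemma, and in the case where $u,x$ both contain marked positions deduce that $\alpha\subseteq u$ (from the marked position in $u$) and that $ux\subseteq\alpha\beta$ (from the marked position in $z$), forcing $x\subseteq\beta$; the other case is symmetric. The paper's proof is simply a terser version of your index bookkeeping.
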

\begin{proof}
  Apply Ogden's lemma to $\alpha \beta \gamma$, marking all positions of $\beta$.
  There is a decomposition $\alpha \beta \gamma = uxzyv$ such that either both $u,x$ or both $y,v$ contain marked positions.
  Without loss of generality, suppose $u,x$ contain marked positions. Then $\alpha$ must be contained entirely in $u$. But $z$ also must 
  contain a marked position, so $ux$ is contained in $\alpha \beta $. It follows that $x$ is contained in $\beta$.
\end{proof}

\section{Chaotic groups}

The property of a group $G$ with generating set $S$ having non-context-free 
quasigeodesic languages is not necessarily well-behaved. For example, it may \textit{a priori} be 
sensitive to the choice of generating set, and may not be inherited by overgroups in which 
$G$ is QI-embedded. This makes it hard to prove not-$\Q$\textbf{CF} type results which generalize to large classes of groups. We therefore introduce a \emph{stronger} version of the not-$\Q$\textbf{CF} property that is 
stable with respect to the above operations.

\begin{definition}[Chaotic group]
A group $G$ with finite generating set $S$ is \textit{chaotic} if there 
exist $\lambda_0\ge 1$ and $\epsilon_0\ge 0$ such that 
for every context--free language 
$L\subseteq S^*$ that contains all $(\lambda_0,\epsilon_0)$--quasigeodesic 
words of $\Gamma(G,S)$, the following holds: for all $\lambda\ge 1$ and 
$\epsilon\ge 0$, the language $L$ contains a word that 
is not $(\lambda,\epsilon)$--quasigeodesic in $\Gamma(G,S)$. 
Then we say $(G,S)$ is chaotic, or $(G,S)$ is chaotic at $(\lambda_0,\epsilon_0)$. 
\end{definition}

This definition seems to depend both on $G$ and $S$. 
But as we shall see, the chaotic nature of a group is preserved 
when changing generating sets, so we may speak of a group $G$ itself being 
chaotic. We emphasize that if $G$ is chaotic, then it is not $\Q$\textbf{CF}.

Let $H\le G$ be finitely generated with 
finite generating sets $T$ for $H$ and $S$ for $G$. 
Assume $H\hookrightarrow G$ is a $(A,B)$--quasiisometric embedding: 
for all $h\in H$,
\begin{equation}\label{eq:QI}
 \tfrac{1}{A}d_T(h)-B \le d_S(h)  \le  Ad_T(h)+B.
\end{equation}
For each $t\in T$, choose a word $w_t\in S^*$, geodesic in $\Gamma(G,S)$,
representing $t$ in $H\le G$, and define 
the monoid homomorphism $h:T^*\to S^*$ by extending $h(t)=w_t$ for $t\in T$. 
Let $\ell_{\max}:=\max_{t\in T}|w_t|$ and 
write 
$R:=(w_{t_1}\mid\cdots\mid w_{t_m})^*\subseteq S^*$ 
where $T=\{t_1,\dots,t_m\}$.

\begin{lemma}[Block replacement preserves quasigeodesicity]\label{lem:block}
$ $
\begin{enumerate}[label=\rm(\alph*)]
 \item  If $w\in T^*$ labels a $(\lambda,\epsilon)$-quasigeodesic 
 in $\Gamma(H,T)$, then $h(w)$ labels a 
 $(\Lambda,\mathsf E)$--quasigeodesic in $\Gamma(G,S)$ with
 \begin{equation}\label{eq:forward}
    \Lambda \;=\; A\,\lambda \, \ell_{\max},\qquad \mathsf E \;=\; 2(\Lambda + 1)\ell_{\max} + \Lambda B + \ell_{\max}\epsilon.
 \end{equation}
 
 \item If $h(w)$ labels a $(\Lambda,\mathsf E)$--quasigeodesic in $\Gamma(G,S)$, then $w$ labels a $(\lambda,\epsilon)$--quasigeodesic in $\Gamma(H,T)$ with
 \begin{equation}\label{eq:reverse}
 \lambda \;=\; A\,\Lambda,\qquad \epsilon \;=\;  \Lambda \, B  + \mathsf E.
 \end{equation}
\end{enumerate}
\end{lemma}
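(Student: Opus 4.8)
The plan is to prove the two directions separately, in each case translating the defining inequality for quasigeodesics in one Cayley graph into the corresponding inequality in the other, paying careful attention to how subwords behave under the block-replacement map $h$. The fundamental tension is that an arbitrary subword of $h(w)$ need not itself be of the form $h(u)$ for a subword $u$ of $w$: it may begin and end in the middle of some block $w_t$. So the first thing I would set up is a ``rounding'' estimate: any subword $v'$ of $h(w)$ is contained in $h(u)$ for some subword $u$ of $w$, where $h(u)$ overhangs $v'$ by at most one block on each side, hence $\bigl||v'| - |h(u)|\bigr| \le 2\ell_{\max}$ and, since each block has length at most $\ell_{\max}$ and represents a generator (so has $d_S$-distance at most $\ell_{\max}$ as well), $\bigl|d_S(v') - d_S(h(u))\bigr| \le 2\ell_{\max}$.

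For part (a), take an arbitrary subword $v'$ of $h(w)$ and let $u$ be the subword of $w$ with $v' \sqsubseteq h(u)$ as above. I would bound $|v'| \le |h(u)| \le \ell_{\max}|u|$ and then use that $u$ is $(\lambda,\epsilon)$-quasigeodesic in $\Gamma(H,T)$ to get $|u| \le \lambda d_T(u) + \epsilon$. Now I need to pass from $d_T(u)$ back to something visible in $\Gamma(G,S)$: the QI-embedding inequality \eqref{eq:QI} gives $d_T(u) \le A\, d_S(u) + AB$ (rearranging the left inequality), and $u$ and $h(u)$ represent the same element of $G$, so $d_S(u) = d_S(h(u)) \le d_S(v') + 2\ell_{\max}$. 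Chaining these yields $|v'| \le \ell_{\max}\lambda\bigl(A\,d_S(v') + AB + 2\lambda^{-1}\cdots\bigr) + \ell_{\max}\epsilon$; collecting terms and being generous with the additive constant gives $|v'| \le \Lambda\, d_S(v') + \mathsf E$ with $\Lambda = A\lambda\ell_{\max}$ and $\mathsf E$ as in \eqref{eq:forward}. The $2(\Lambda+1)\ell_{\max}$ term in $\mathsf E$ is exactly the slack needed to absorb the two-block overhang on both the length and the distance side, plus the ``$+1$'' guards against the degenerate edge cases where $u$ is empty.

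For part (b), the direction is reversed and actually cleaner, because now I start with a genuine subword $u$ of $w$ and want to bound $|u|$; here $h(u)$ \emph{is} literally a subword of $h(w)$, so no rounding is needed on that side. Since $h(w)$ is $(\Lambda,\mathsf E)$-quasigeodesic, $|h(u)| \le \Lambda\, d_S(h(u)) + \mathsf E$. On the left, $|u| \le |h(u)|$ is false in general (blocks can be longer than one letter), so instead I use $|h(u)| \ge |u|$ fails too — rather, I note $d_S(h(u)) = d_S(u)$ and apply the \emph{right} inequality of \eqref{eq:QI} in the form $d_S(u) \le A\, d_T(u) + B$, giving $|h(u)| \le \Lambda(A\,d_T(u)+B) + \mathsf E$; and since $|u| \le |h(u)|$ only when blocks have length $\ge 1$ — which they do, as each $w_t$ is a nonempty geodesic word for a generator, so $|h(u)| \ge |u|$ indeed holds — we conclude $|u| \le A\Lambda\, d_T(u) + (\Lambda B + \mathsf E)$, i.e. $\lambda = A\Lambda$, $\epsilon = \Lambda B + \mathsf E$ as in \eqref{eq:reverse}.

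The main obstacle, and the place requiring real care rather than mechanical estimation, is part (a)'s treatment of subwords that straddle block boundaries: one must be honest that the offending subword $v'$ need not project to any subword $u$ of $w$ under $h$, set up the enclosing $h(u)$ correctly, and then verify that the one-block overhangs on each end contribute only a bounded additive error to \emph{both} the word-length and the $d_S$-distance — the latter because each block, representing a single generator of $H$ inside $G$, has $d_S$-length at most $\ell_{\max}$. Once that bookkeeping is done, everything else is a routine chain of inequalities, and the slightly baroque-looking constants in \eqref{eq:forward} are precisely what falls out; I would not belabor optimizing them.
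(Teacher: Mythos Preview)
Your proposal is correct and follows essentially the same route as the paper's proof: round an arbitrary subword of $h(w)$ to a block-aligned one at cost $\le 2\ell_{\max}$ in both length and $d_S$, then chain the quasigeodesic inequality with the QI-embedding bound \eqref{eq:QI}. The only cosmetic difference is that in part (a) you round \emph{outward} (enclose $v'$ in $h(u)$), whereas the paper rounds \emph{inward} (writes $v = a\,h(u)\,b$ with $a,b$ partial blocks); both yield the same estimates, and your part (b) is identical to the paper's.
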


\begin{proof}
  To prove (a) we need to verify the $(\Lambda,\mathsf E)$-quasigeodesic inequality for all subwords of $h(w)$.
  Subwords of $h(w)$ can be decomposed into blocks $w_{t_i}$, up to error at the 
  start and end. Concretely, any subword $v$ of $h(w)$ is of the form $v = ah(u)b$, where $u$ is a subword of $w$,
  $a$ is a proper suffix of some $w_{t_i}$, and $b$ is a proper prefix of some $w_{t_j}$.

  We have $d_S(h(u)) \ge \tfrac{1}{A}d_T(u) - B$
  from \eqref{eq:QI}, since $h(u)$ represents the same element of $H$ as $u$.
  Also, $|h(u)| \le |u|\ell_{\max}$ since each of the $|u|$ blocks in $h(u)$ has 
  length at most $\ell_{\max}$. Further, since $w$ is $(\lambda,\epsilon)$--quasigeodesic,
  and $u$ is a subword of $w$,
  we have $|u| \le \lambda d_T(u) + \epsilon$.  
  We account for the terms $a,b$ via estimates $|v| \le |h(u)| + 2\ell_{\max}$ 
  and $d_S(v) \ge d_S(h(u)) - 2\ell_{\max}$.
  Combining these shows that 
  \begin{align*} |v| &\le |h(u)| + 2\ell_{\max}  \\ &\le |u|\ell_{\max} + 2\ell_{\max} \\ 
  &\le (\lambda d_T(u) +\epsilon)\ell_{\max} + 2\ell_{\max} \\
  &= A\lambda \ell_{\max} \left( \frac{1}{A}d_T(u) - B - 2\ell_{\max}\right) + 2(\Lambda + 1)\ell_{\max} + \Lambda B + \ell_{\max}\epsilon  \\
  &\le \Lambda (d_S(h(u))-2\ell_{\max} ) + \mathsf E \\
  &\le \Lambda d_S(v) + \mathsf E.
  \end{align*}

  The proof of (b) is similar. Let 
  $u$ be a subword of $w$. Then $h(u)$ is a subword of $h(w)$, so we have 
  $|h(u)| \le \Lambda d_S(h(u)) + \mathsf E$. From \eqref{eq:QI} we 
  have $d_S(h)  \le  Ad_T(h)+B$. Lastly $|u| \le |h(u)|$ since all 
  blocks $w_{t_i}$ have positive length. Combining these gives
  \begin{align*}
    |u| &\le |h(u)| \le \Lambda d_S(h(u)) + \mathsf E \\
    &\le \Lambda (A d_T(u) + B) + \mathsf E \\
    &= A\Lambda d_T(u) + \Lambda B + \mathsf E \\
    &= \lambda d_T(u) + \epsilon,
  \end{align*}
  which shows that $w$ is $(\lambda, \varepsilon)$-quasigeodesic.
\end{proof}

\begin{theorem}[Inheriting from an undistorted subgroup]\label{thm:inherit}
    If $(H,T)$ is chaotic, where $T$ is some finite generating set, then $(G,S)$ is chaotic for all finite 
    generating sets $S$ of $G$.
\end{theorem}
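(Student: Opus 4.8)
The plan is to transfer the chaos of $(H,T)$ to $(G,S)$ through the block-replacement homomorphism $h\colon T^*\to S^*$, using \Cref{lem:block} in both directions together with the closure of context-free languages under inverse homomorphism. Since QI-embeddedness does not depend on the generating sets, $H$ is undistorted in $(G,S)$ for every finite generating set $S$, so it suffices to fix one such $S$ along with embedding constants $(A,B)$ and the homomorphism $h$; enlarging $A$ if necessary I assume $A\ge 1$. Supposing $(H,T)$ is chaotic at $(\lambda_0,\epsilon_0)$, I let $\Lambda_0,\mathsf E_0$ be the constants that \eqref{eq:forward} produces from $(\lambda_0,\epsilon_0)$ — namely $\Lambda_0=A\lambda_0\ell_{\max}$ and $\mathsf E_0 = 2(\Lambda_0+1)\ell_{\max}+\Lambda_0 B+\ell_{\max}\epsilon_0$ — and aim to show that $(G,S)$ is chaotic at $(\Lambda_0,\mathsf E_0)$.

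To that end, let $L\subseteq S^*$ be context-free and contain every $(\Lambda_0,\mathsf E_0)$-quasigeodesic word of $\Gamma(G,S)$. The key move is to pull $L$ back along $h$: set $L':=h^{-1}(L)\subseteq T^*$. Because context-free languages are closed under inverse monoid homomorphism, $L'$ is context-free. I then verify that $L'$ contains every $(\lambda_0,\epsilon_0)$-quasigeodesic word of $\Gamma(H,T)$: if $w\in T^*$ is one, then \Cref{lem:block}(a) makes $h(w)$ a $(\Lambda_0,\mathsf E_0)$-quasigeodesic in $\Gamma(G,S)$, so $h(w)\in L$, hence $w\in L'$. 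Thus $L'$ is a legitimate input to the chaos hypothesis for $(H,T)$.

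Finally, to produce the bad words required in $L$, I fix arbitrary $\Lambda\ge 1$ and $\mathsf E\ge 0$ and set $\lambda:=A\Lambda$ and $\epsilon:=\Lambda B+\mathsf E$, which are precisely the constants in \eqref{eq:reverse} (and $\lambda\ge1$). Chaos of $(H,T)$ applied to $L'$ yields a word $w\in L'$ that is not $(\lambda,\epsilon)$-quasigeodesic in $\Gamma(H,T)$. By the contrapositive of \Cref{lem:block}(b), $h(w)$ is not $(\Lambda,\mathsf E)$-quasigeodesic in $\Gamma(G,S)$; and $h(w)\in L$ since $w\in h^{-1}(L)$. As $\Lambda$ and $\mathsf E$ were arbitrary, this is exactly the assertion that $(G,S)$ is chaotic at $(\Lambda_0,\mathsf E_0)$.

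I do not expect a serious obstacle: once \Cref{lem:block} is available the argument is essentially bookkeeping, and the only ingredient from outside the paper is the standard closure of context-free languages under inverse homomorphism. The one point that needs care is keeping the two pairs of constants straight — the witnessing pair $(\Lambda_0,\mathsf E_0)$ for $(G,S)$ must be the output of the forward estimate \eqref{eq:forward}, while the reverse estimate \eqref{eq:reverse} is what is applied, in contrapositive form, to pull non-quasigeodesicity in $H$ down to non-quasigeodesicity in $G$. No injectivity or non-erasing property of $h$ is used anywhere.
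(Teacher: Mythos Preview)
Your proof is correct and follows essentially the same approach as the paper. The only cosmetic difference is that the paper defines $L':=h^{-1}(L\cap R)$ and invokes closure under intersection with regular languages as well; but since $h(T^*)\subseteq R$ by construction, one has $h^{-1}(L)=h^{-1}(L\cap R)$, so your simpler definition of $L'$ yields the same language and the extra step is redundant.
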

\begin{proof}
    Let $(H,T)$ be chaotic at $(\lambda_0, \epsilon_0)$. Choose 
    constants $(\Lambda_0,\mathsf E_0)$ according to \eqref{eq:forward} with 
    $(\lambda_0, \epsilon_0)$ in place of $(\lambda, \epsilon)$, 
    and let $L\subset S^*$ be some CFL containing all $(\Lambda_0,\mathsf E_0)$-quasigeodesics in $\Gamma(G,S)$.
    Define $L':=h^{-1}(L\cap R)\subseteq T^*$.
    CFLs are closed under intersection with regular languages and inverse homomorphisms (see for example \cite{SipserTOC3}), so $L'$ is a CFL.
    By construction, $L'$ contains all $(\lambda_0,\epsilon_0)$-quasigeodesics in $\Gamma(H,T)$.
    Let $\Lambda\ge 1, \mathsf E \ge 0$ be arbitrary. Using \eqref{eq:reverse}, 
    choose $(\lambda, \epsilon)$ so that $(\Lambda, \mathsf E)$-quasigeodesics in $\Gamma(G,S)$
    are sent to $(\lambda, \epsilon)$-quasigeodesics in $\Gamma(H,T)$ by $h^{-1}$.
    Since $(H,T)$ is chaotic at $(\lambda_0, \epsilon_0)$, 
    $L'$ contains a word $w \in T^*$ that is not 
    $(\lambda,\epsilon)$-quasigeodesic
    in $\Gamma(H,T)$. Then $h(w) \in L$ is not $(\Lambda, \mathsf E)$-quasigeodesic in $\Gamma(G,S)$, 
    by the contrapositive of the reverse direction of \lemref{lem:block}. Since $\Lambda, \mathsf E$ were arbitrary, $(G,S)$ is chaotic at $(\Lambda_0, \mathsf E_0)$.
\end{proof}

\begin{corollary}\label{cor:gensets}
    If $(H,T)$ is chaotic, then so is $(H,T')$ for any other 
    finite generating set $T'$.
\end{corollary}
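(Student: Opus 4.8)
The plan is to obtain this immediately from \Cref{thm:inherit} by taking $H$ to be an undistorted subgroup of \emph{itself}. Concretely, set $G := H$, let $S := T'$ be the target generating set, and regard $H \le G$ via the identity inclusion. The only hypothesis needed to invoke \Cref{thm:inherit} is that this inclusion is a quasiisometric embedding, i.e.\ that there are constants $(A,B)$ satisfying \eqref{eq:QI} with $d_T$ on the source and $d_{T'} = d_S$ on the target. This is precisely the elementary fact, already recorded in the preliminaries, that changing generating sets distorts distances by at most a uniform multiplicative constant: writing each $t \in T$ as a word in $T'$ and each $t' \in T'$ as a word in $T$, one may take $A := \max\{\max_{t\in T} d_{T'}(t),\ \max_{t'\in T'} d_T(t')\}$, so that $\tfrac{1}{A} d_T(h) \le d_{T'}(h) \le A\, d_T(h)$ for all $h \in H$; thus $B = 0$ works.

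With this in hand, \Cref{thm:inherit} applied to the chaotic pair $(H,T)$ sitting undistortedly inside $(H = G,\, S = T')$ yields directly that $(G,S) = (H,T')$ is chaotic. Since $T'$ was an arbitrary finite generating set of $H$, this is the claim. If one wishes to see explicitly what the theorem does in this special case: the monoid homomorphism $h\colon T^*\to (T')^*$ sends each $t\in T$ to a chosen $T'$-geodesic word representing $t$, the regular language $R$ is the set of concatenations of these blocks, and \Cref{lem:block} transports quasigeodesicity back and forth between the two word metrics on $H$ --- but no new argument is required beyond what was already proved.

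Accordingly, I expect no real obstacle here: the corollary is a direct specialization of \Cref{thm:inherit}, and the single point to check --- that passing between two finite generating sets of the same group is a quasiisometry --- is standard and was in fact implicitly the only new ingredient. The genuine content resides in \Cref{thm:inherit}, and ultimately in the two-sided block-replacement estimate of \Cref{lem:block}.
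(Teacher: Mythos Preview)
Your proof is correct and follows exactly the paper's approach: apply \Cref{thm:inherit} with $G=H$ and $S=T'$, using the standard fact that $H$ is QI-embedded in itself under any change of finite generating set. The paper's proof is just a terser version of what you wrote.
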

\begin{proof}
    Clearly $H$ is QI-embedded in itself. Then this follows immediately from \thmref{thm:inherit}.
\end{proof}

This confirms that we may speak of a group $G$ itself being chaotic,
and also that if $G$ is chaotic, then it is \textit{not} $\Q$\textbf{CF}.

\section{Abelian groups}
We will show that $\Z^2$ is chaotic, which by \thmref{thm:inherit} will then extend to all
groups containing an undistorted $\Z^2$. For example, 
it will apply to abelian groups of free rank at least $2$, which are precisely the non-virtually-cyclic abelian groups. The results of this section are all subsumed by the more general theorem (for groups containing an undistorted non-virtually-cyclic nilpotent group) which follows in \S5. However, the $\Z^2$ case provides a good illustration of the basic proof mechanism.

\begin{theorem}\label{thm:z2}
    $\Z^2$ is chaotic.
\end{theorem}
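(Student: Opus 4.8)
The plan is to exhibit, for a suitable choice of base constants $(\lambda_0, \epsilon_0)$, a family of words which must belong to any context-free language $L$ containing the $(\lambda_0,\epsilon_0)$-quasigeodesics of $\Z^2$, but which — after pumping via Ogden's lemma — produces words that fail to be $(\lambda,\epsilon)$-quasigeodesic for any prescribed $(\lambda,\epsilon)$. Write $S = \{a^{\pm 1}, b^{\pm 1}\}$ for the standard generators. The key geometric feature of $\Z^2$ we exploit is that a word like $a^N b^N a^{-N} b^{-N}$ is a genuine geodesic (indeed every subword is geodesic, since it only ever moves monotonically in each coordinate before reversing), so it lies in \emph{every} $L$ as above; yet it represents the identity, so after traversing it we are very far (distance $\approx 4N$) along the word but at distance $0$ in the group. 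More useful still: take a word of the form $w_N = (a^N b^N a^{-N} b^{-N})^k$ or a word that weaves back and forth so that a long prefix returns near the origin. The point of using $\Z^2$ rather than $\Z$ is that in $\Z$ one cannot return to the origin without backtracking (which immediately destroys quasigeodesicity of subwords), whereas in $\Z^2$ one genuinely can build arbitrarily long geodesic subwords that close up into loops.

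Concretely, I would take $\alpha\beta\gamma$ where $\beta$ is a long run of a single generator, say $\beta = a^N$, flanked by material chosen so that the whole word is a $(\lambda_0,\epsilon_0)$-quasigeodesic — for instance $\alpha = b^M$, $\gamma = a^{-N}b^{-M} \cdots$ arranged so that each subword stays monotone in each of the two coordinates over any contiguous stretch, making $\alpha\beta\gamma$ honestly geodesic, hence in $L$. Apply \Cref{cor:ogden} with the marked positions being all of $\beta$: we get a decomposition $\alpha\beta\gamma = uxzyv$ with (WLOG) $x$ contained in $\beta$, so $x = a^r$ for some $r \ge 1$, and all powers $ux^nzy^nv \in L$. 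Now $y$ is a (possibly empty) factor somewhere to the right of $x$. If $y$ is also a power of $a$ (or empty, or a power of $a^{-1}$ of different length), then pumping changes the total $a$-displacement linearly in $n$ while the word length also grows linearly in $n$; by choosing the flanking words so that the $a$-exponents of $x$ and $y$ do \emph{not} cancel — or by a second application to kill the other case — the pumped word contains a subword whose length grows like $n$ but whose endpoint displacement is bounded, or grows strictly slower, forcing $|{\cdot}| > \lambda d(\cdot) + \epsilon$ once $n$ is large. The only subtlety is that $x$ and $y$ might pump in a compensating way; this is handled exactly as in the proof of \Cref{cor:ogden}'s intended use: because $x \subseteq \beta$ is a block of pure $a$'s, while the structure of $\gamma$ can be pre-engineered so that $y$ lies in a region where it is forced to be either empty or a monochromatic block that cannot cancel $x$'s contribution to the relevant subword.

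The main obstacle I anticipate is the bookkeeping needed to make the "flanking" words $\alpha, \gamma$ simultaneously (i) force $\alpha\beta\gamma$ to be a bona fide $(\lambda_0,\epsilon_0)$-quasigeodesic (so membership in $L$ is automatic), (ii) constrain the shape of any Ogden decomposition enough that the pumped factor $x$ (and ideally $y$) is a predictable monochromatic block, and (iii) guarantee that pumping breaks the $(\lambda,\epsilon)$-inequality for \emph{all} $(\lambda,\epsilon)$ — which requires the word-length to outgrow $\lambda$ times the displacement of \emph{some} subword no matter how large $\lambda$ and $\epsilon$ are, i.e.\ the displacement of the relevant subword must stay bounded (or grow sublinearly) while its length grows linearly in $n$. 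The cleanest way to secure (iii) is to arrange that the pumped-up word contains a closed subword (a loop) of length $\sim cn$ returning to within bounded distance of its start: then that subword has $|{\cdot}| \sim cn$ but $d(\cdot) = O(1)$, defeating any fixed $(\lambda,\epsilon)$. I would therefore choose $\alpha\beta\gamma$ so that $\beta$ sits inside a segment which is already "half a loop," e.g.\ the full word is $a^N b^N a^{-N} b^{-N}$ (geodesic, in $L$), mark the first block $\beta = a^N$, and observe that Ogden forces $x \subseteq a^N$, so $ux^nzy^nv$ is either $a^{N'}b^{N''}a^{-N'''}b^{-N''''}$ with the $a$-blocks out of balance (then an $a$-subword witnesses failure) or stays balanced only if $y$ also sits in an $a$-block, in which case the two $a$-blocks $a^{N'}$ and $a^{-N'''}$ with $N' \ne N'''$ (since $x,y$ are on the same side after the \Cref{cor:ogden} analysis) again break balance; pushing this through for large $n$ gives a subword of length $\Theta(n)$ and bounded displacement, completing the proof. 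I will also need \Cref{thm:inherit} only implicitly here — the statement to prove is just that $\Z^2$ itself is chaotic — so no distortion estimates enter; the entire argument is the Ogden-pumping computation above.
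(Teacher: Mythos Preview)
There is a genuine gap, and it lies exactly where you suspected the ``main obstacle'' would be: the construction of the witness word. Your proposed word $a^N b^N a^{-N} b^{-N}$ is \emph{not} a geodesic --- it represents the identity, so it has length $4N$ and displacement $0$. Your parenthetical justification (``every subword is geodesic, since it only ever moves monotonically in each coordinate before reversing'') is also false: the subword $a^N b^N a^{-N}$ has length $3N$ and displacement $N$, hence is not geodesic; indeed no nontrivial word representing the identity can be $(\lambda_0,\epsilon_0)$-quasigeodesic for \emph{any} fixed $(\lambda_0,\epsilon_0)$ once $N$ is large. Consequently there is no reason for your word to lie in $L$, and the pumping argument never gets started. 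The same objection applies to the variant $b^M a^N a^{-N} b^{-M}\cdots$ you sketch: any word that closes into a loop, or contains a long loop as a subword, is already disqualified from being uniformly quasigeodesic \emph{before} any pumping.

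The paper resolves this with a ``spiral'' rather than a loop: the word
\[
w \;=\; (b^{-q}a^{2q}b^{q})\,a^{-q}\,(b^{q}a^{2q}b^{-q})
\]
is genuinely $(5,0)$-quasigeodesic (in particular it ends at $(3q,0)$, not at the origin, and no subword has ratio worse than $5$), so it must lie in $L$. The marked block $\beta = a^{-q}$ is the middle connector. After pumping, \Cref{cor:ogden} guarantees (up to symmetry) that $x = a^{-k}$ lies inside $\beta$ while the left spiral arm $b^{-q}a^{2q}b^{q}$ is untouched; pumping $x$ enough times extends the middle run to length at least $2q$, so the pumped word acquires the \emph{prefix} $b^{-q}a^{2q}b^{q}a^{-2q}$, which is a closed loop of length $6q$ and displacement $0$. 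That prefix is the bad subword. The point you were missing is this two-stage structure: the original word must be open (a spiral) so that it is quasigeodesic and hence in $L$; only \emph{after} pumping does a closed-loop subword appear. Your construction tried to start with the loop already present, which is why it cannot work.
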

\begin{proof}
    We choose to work with the standard generating set $\{a,b\}$ where $a = (1,0)$ and $b = (0,1)$.
    For this generating set, we show that $\Z^2$ is chaotic at $(5,0)$. Let $L$ be a 
    CFL containing all $(5,0)$-quasigeodesic words in $\Z^2$, with Ogden constant $p$.
    Let $\lambda \ge 1$, $\epsilon \ge 0$ be arbitrary.
    Let $q = \lceil\max(p, \epsilon)\rceil$ and consider the word 
    \[ w = (b^{-q}a^{2q}b^{q})a^{-q}(b^qa^{2q}b^{-q}) . \]
    It is straightforward to check that $w$ is $(5,0)$-quasigeodesic (see \figref{fig:spiral}), so $w\in L$.
    We apply \corref{cor:ogden}, with $\alpha = (b^{-q}a^{2q}b^{q})$, $\beta = a^{-q}$, and $\gamma = (b^qa^{2q}b^{-q})$. 
    This gives us a decomposition $w = uxzyv$, where either $x$ or $y$ is fully contained in 
    $\beta = a^{-q}$. Without loss of generality suppose $x$ is fully contained in $a^{-q}$, so that $x = a^{-k}$ for some $1 \le k \le q$.
    Defining $w_n:= ux^nzy^nv \in L$, we then have
    \[ w_n = (b^{-q}a^{2q}b^{q})a^{-q -kn}X_n \] 
    for some words $X_n$; importantly, the prefix $(b^{-q}a^{2q}b^{q})$ is not affected by the pumping. Choose 
    $n \ge q/k$, so that $w_n$ has a prefix $w' = b^{-q}a^{2q}b^{q}a^{-2q}$. But this subword 
    is not $(\lambda, \epsilon)$-quasigeodesic, because $|w'| = 6q$ and $d(w') = 0$, so in particular 
    $|w'| \ge 6\epsilon > \lambda \cdot 0 + \epsilon$. Since $w_n \in L$, we have shown that 
    $L$ contains a word that is not $(\lambda, \epsilon)$-quasigeodesic.
\end{proof}

\begin{figure}[h]
    \begin{tikzpicture}[
    x=2cm,y=2cm,
    >=Latex,
    semithick,
    line cap=round,line join=round,
    baseline={(current bounding box.center)}
    ]
    \tikzset{
        seglabel/.style={pos=0.5,font=\scriptsize,text=gray!70,inner sep=0.6pt},
        hintarrow/.style={dashed,-{Latex},shorten <=2pt,shorten >=2pt},
        >={Latex[length=2.2mm,width=1.6mm]}
    }

    \draw (0,0)
        -- ++(0,-1) node[seglabel,left=2pt]  {$b^{-q}$}
        -- ++(2,0)  node[seglabel,below=2pt] {$a^{2q}$}
        -- ++(0,1)  node[seglabel,right=2pt] {$b^{q}$}
        -- ++(-1,0) node[seglabel,above=2pt] {$a^{-q}$}
        -- ++(0,1)  node[seglabel,right=2pt] {$b^{q}$}
        -- ++(2,0)  node[seglabel,above=2pt] {$a^{2q}$}
        -- ++(0,-1) node[seglabel,right=2pt] {$b^{-q}$};

    \fill (0,0) circle (1pt) node[above left] {start};
    \fill (3,0) circle (1pt) node[below right] {end};

    \draw[hintarrow] (1,0) -- (0,0); 
    \draw[hintarrow] (2,0) -- (3,0); 
    \end{tikzpicture}
    \caption{The word $w$, with pumping along dashed lines}
    \label{fig:spiral}
\end{figure}
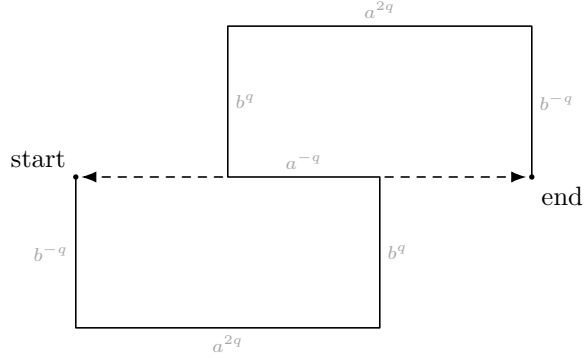

\begin{corollary}
    Non-virtually-cyclic finitely generated abelian groups are chaotic.
\end{corollary}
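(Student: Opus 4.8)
The plan is to reduce the statement to \Cref{thm:z2} via \Cref{thm:inherit}; concretely, it suffices to exhibit an undistorted copy of $\Z^2$ inside an arbitrary non-virtually-cyclic finitely generated abelian group $A$. First I would apply the structure theorem for finitely generated abelian groups to write $A\cong\Z^r\oplus F$ with $F$ finite, and note that $A$ is virtually cyclic exactly when $r\le 1$ (if $r\ge 2$ then $A$ contains $\Z^2$, which is not virtually cyclic, so neither is $A$). Our hypothesis therefore forces $r\ge 2$, and the candidate subgroup is the coordinate $\Z^2\le\Z^r$ spanned by the first two standard basis vectors.

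Next I would verify that this $\Z^2$ is undistorted in $A$. Since QI-embeddedness is independent of generating sets and is preserved under composition of inclusions, I can argue in two steps: (i) $\Z^r$ has finite index in $A$, so the inclusion $\Z^r\hookrightarrow A$ is a quasi-isometry, in particular a QI-embedding; and (ii) with standard generators the word metric on $\Z^r$ is the $\ell^1$ metric, whose restriction to the first-two-coordinates subgroup is precisely the $\ell^1$ (= standard word) metric on $\Z^2$, so $\Z^2$ is isometrically embedded, in particular undistorted, in $\Z^r$. Composing (i) and (ii) shows that $\Z^2$ is undistorted in $A$.

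Finally, \Cref{thm:z2} gives that $(\Z^2,\{a,b\})$ is chaotic, so \Cref{thm:inherit} applies with $H=\Z^2$ and $G=A$ and yields that $(A,S)$ is chaotic for every finite generating set $S$ of $A$, which is exactly the claim. The only point demanding any care is the undistortedness in step (ii), and that is immediate from the explicit $\ell^1$ description of the word metric on $\Z^r$ (one could instead cite Osin's theorem on subgroup distortion in nilpotent groups, but that is unnecessary here); everything else is bookkeeping with the structure theorem and the inheritance machinery already set up, so I do not anticipate a genuine obstacle.
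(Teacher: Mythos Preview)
Your argument is correct and matches the paper's proof essentially verbatim: the paper also observes that a non-virtually-cyclic finitely generated abelian group has free rank at least $2$, hence contains a QI-embedded $\Z^2$, and then invokes \Cref{thm:z2} together with \Cref{thm:inherit}. You have simply supplied more detail on the undistortedness step than the paper does.
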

\begin{proof}
    Let $G$ be non-virtually-cyclic, finitely generated, and abelian. Then $G$ has free rank at least $2$, and it follows that $G$ contains a QI-embedded $\Z^2$. 
    Now the result follows by \thmref{thm:inherit}.
\end{proof}

The next technical lemma allows us to be more flexible with the generating set, which will be useful later.

\begin{lemma}\label{lem:tech}
    Let $S$ be a finite generating set for $\Z^2$, 
    and $a, b\in S$ be any two independent generators.
    For some $\lambda\ge 1$, and for $q\ge 1$,
    the words $w_q = (b^{-q}a^{2q}b^{q})a^{-q}(b^qa^{2q}b^{-q})$
    are uniformly $(\lambda,0)$-quasigeodesic in $\Gamma(\Z^2, S)$. 
    
\end{lemma}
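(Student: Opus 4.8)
The plan is to reduce the claim to the combinatorial estimate already used in the proof of \Cref{thm:z2}. The key observation is that $w_q$ is a word in $\{a^{\pm 1}, b^{\pm 1}\}$ only, so it — and every one of its subwords — traces a path inside the rank-two free abelian subgroup $H := \langle a, b \rangle \le \Z^2$, whose word metric with respect to $\{a,b\}$ is exactly the $\ell^1$-metric in the $(a,b)$-basis: $d_{\{a,b\}}(ma+nb) = |m|+|n|$.

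First I would establish that the inclusion $(H, \{a,b\}) \hookrightarrow (\Z^2, S)$ is a quasi-isometric embedding with \emph{no} additive constant, i.e.\ there is $C \ge 1$, depending only on $S$ and the chosen generators $a, b$ (and not on $q$), with $d_{\{a,b\}}(h) \le C\, d_S(h)$ for all $h \in H$. This follows by combining two elementary facts, each free of additive error: for every $v \in \Z^2$ one has $\|v\|_2 \le (\max_{s \in S} \|s\|_2)\, d_S(v)$ by the triangle inequality, and the linear isomorphism $\R^2 \to \R^2,\ (m,n) \mapsto ma + nb$ (invertible since $a,b$ are independent) is bi-Lipschitz for $\|\cdot\|_2$, so $|m|+|n|$ is comparable to $\|ma+nb\|_2$. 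Composing these two comparisons yields the constant $C$.

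Next I would invoke the combinatorial content of \Cref{thm:z2}. Since the path of $w_q$ in $\Gamma(H, \{a,b\})$ is, after the relabelling $a \leftrightarrow (1,0)$, $b \leftrightarrow (0,1)$, precisely the spiral word analysed there, the same straightforward verification — splitting $w_q$ into its maximal runs of $a$'s and $b$'s and bounding $|v|$ against $d_{\{a,b\}}(v) = |m|+|n|$ for a subword $v = ma + nb$ — gives $|v| \le 5\, d_{\{a,b\}}(v)$ for every subword $v$, a bound that is uniform in $q$. (A cruder constant would also do, since we only need \emph{some} $\lambda$.) Combining with the previous step, every subword $v$ of $w_q$ satisfies $|v| \le 5\, d_{\{a,b\}}(v) \le 5C\, d_S(v)$, so $w_q$ is $(5C, 0)$-quasigeodesic in $\Gamma(\Z^2, S)$ for all $q \ge 1$, and $\lambda := 5C$ proves the lemma.

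I do not expect a substantial obstacle here; the one point that needs genuine care is the first step, namely checking that passing from the $\{a,b\}$-metric on $H$ to the $S$-metric on $\Z^2$ distorts distances only multiplicatively, with no additive term, so that the error constant $\epsilon = 0$ is preserved. This is routine linear algebra — equivalence of norms on $\R^2$, together with the fact that word metrics on $\Z^2$ are bi-Lipschitz to the Euclidean metric with no additive error — but it is the crux of why the lemma's conclusion retains $\epsilon = 0$ rather than picking up an additive error.
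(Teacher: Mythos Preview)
Your proposal is correct and follows essentially the same approach as the paper's proof: both reduce to the $(5,0)$-quasigeodesic estimate from \Cref{thm:z2} via a purely multiplicative comparison $d_{\{a,b\}} \le C\, d_S$ (no additive term), concluding $\lambda = 5C$. You supply more detail than the paper on \emph{why} this comparison holds---routing through the Euclidean norm and equivalence of norms on $\R^2$---whereas the paper simply asserts it as a consequence of $S$ being finite.
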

\begin{proof}
    The proof of \thmref{thm:z2} almost carries over, since the specific choice of $a = (1,0)$ and $b = (0,1)$ made there is not strictly necessary. The only additional complication is that the additional generators in $S$ 
    apart from $a,b$ may allow for shorter paths, thereby reducing the displacement 
    in $d_S$ norm of subwords. However, since $S$ is finite, there is a constant $C$ depending on $S$ such that
    for all $x\in \Z^2$, $d_S(x) \ge Cd_{\{a,b\}}(x)$. In particular, since 
    the $w_q$ are $(5,0)$-quasigeodesic in $\Gamma(\Z^2, \{a,b\})$ (as in \thmref{thm:z2}),
    the $w_q$ are $(5C,0)$-quasigeodesic in $\Gamma(\Z^2, S)$. So we can take $\lambda = 5C$.
\end{proof}

\section{Nilpotent groups}

The goal is to prove that non-virtually-cyclic virtually 
nilpotent groups are chaotic. This class of groups 
is much bigger than the class for which the results of \S4 apply: many nilpotent groups contain no undistorted $\Z^2$.
For example,
the discrete Heisenberg group of 
matrices of the form $\begin{psmallmatrix}
1 & x & z \\
0 & 1 & y \\
0 & 0 & 1
\end{psmallmatrix}$ for 
integer $x,y,z$ is nilpotent but nonabelian, and contains no undistorted $\Z^2$. Our basic tool will be similar: 
a spiral word that can be pumped using Ogden's lemma 
to give arbitrarily `bad' quasigeodesics. We will 
however need some results on the structure and metric distortion of nilpotent groups.

We will use the convention $[a,b] = a^{-1}b^{-1}ab$. If $G$ is a group, 
and $A, B \subseteq G$ are subsets, then $[A,B]$ denotes the subgroup generated 
by $\{[a,b]: a\in A, b \in B \}$. 
Let $G$ be a group, set $G_1 = G$, 
and recursively define $G_{i+1} = [G_i, G]$. Then we have
$G = G_1 \triangleright G_2 \triangleright \cdots$
which is the \textit{lower central series} for $G$. 
If eventually the $G_i$ are all trivial, $G$ is \textit{nilpotent}. 
In that case, the \textit{nilpotency class} of $G$ is the least $c$ such that 
$G_{c+1}$ is trivial.
The following is a standard fact: 
if $H\le G$ is a finite index subgroup, then $H$ is QI-embedded in $G$.
This implies that it suffices to consider nilpotent groups, and chaoticity 
will immediately extend to virtually nilpotent groups.
Denote $G' = [G,G]$ and write $G_{\text{ab}} = G/G'$ for the abelianization of $G$.
The next technical lemma arises from a MathOverflow post of Yves Cornulier \cite{Cornulier:MO:Abelianization:2020}, 
and is proven in more detail in \cite{auffinger2021asymptoticshapesstationarypassage}; 
see Appendix B, Lemma 7 and discussion just preceding it.
\begin{lemma}\label{lem:cut}
    Let $G$ be nilpotent, where $G_{\text{ab}} \cong T \oplus \Z^r$ for 
    some torsion abelian group $T$ and $r \ge 0$.
    Then there is a nilpotent, finite-index subgroup $H\le G$ with 
    $H_{\text{ab}} \cong \Z^r$.
\end{lemma}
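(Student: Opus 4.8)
The plan is to induct on the nilpotency class $c$ of $G$ (we work throughout with finitely generated groups, as elsewhere in the paper). If $c\le 1$ then $G$ is abelian, $G = T\oplus\Z^{r}$, and $H := 0\oplus\Z^{r}$ is a finite-index subgroup with $H_{\text{ab}} = H\cong\Z^{r}$. For the inductive step, suppose $c\ge 2$ and the result is known for smaller class. Write $Z := G_{c}$ for the last nontrivial term of the lower central series; it is a nontrivial central subgroup of $G$, and $\bar G := G/Z$ is nilpotent of class at most $c-1$. Since $Z\le [G,G]$, we have $\bar G_{\text{ab}} = G/[G,G] = G_{\text{ab}}\cong T\oplus\Z^{r}$, so by the inductive hypothesis there is a finite-index subgroup $\bar H\le\bar G$ with $\bar H_{\text{ab}}\cong\Z^{r}$.

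Let $H_{0}\le G$ be the full preimage of $\bar H$, so $Z\le H_{0}$, $H_{0}/Z\cong\bar H$, and $[G:H_{0}] = [\bar G:\bar H]<\infty$. Abelianizing the central extension $1\to Z\to H_{0}\to\bar H\to 1$ yields an exact sequence $0\to \bar Z\to(H_{0})_{\text{ab}}\to\bar H_{\text{ab}}\to 0$, where $\bar Z$ is the image of $Z$ in $(H_{0})_{\text{ab}}$, i.e.\ $\bar Z\cong Z/(Z\cap[H_{0},H_{0}])$. Next I would observe that $\bar Z$ is finite: a finite-index subgroup of a finitely generated nilpotent group has finite-index derived subgroup (a standard fact, extractable from the fact that isolators of subgroups of nilpotent groups are again subgroups), so $[H_{0},H_{0}]$ has finite index in $[G,G]$, and hence $Z\cap[H_{0},H_{0}]$ has finite index in $Z\le[G,G]$. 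As $\bar H_{\text{ab}}\cong\Z^{r}$ is free the sequence splits, $(H_{0})_{\text{ab}}\cong\bar Z\oplus\Z^{r}$ with $\bar Z$ finite, so $\bar Z$ is exactly the torsion subgroup of $(H_{0})_{\text{ab}}$.

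It remains to kill the residual torsion $\bar Z$. Let $\phi\colon H_{0}\to\bar Z$ be abelianization followed by the projection $(H_{0})_{\text{ab}} = \bar Z\oplus\Z^{r}\to\bar Z$, and set $H := \ker\phi$, of index $|\bar Z|$ in $H_{0}$. Since $\bar Z$ is precisely the image of $Z$, the restriction $\phi|_{Z}$ is onto, so $H_{0} = HZ$ and $H$ surjects onto $H_{0}/Z = \bar H$. Hence I can pick a finite generating set $\bar h_{1},\dots,\bar h_{p}$ of $\bar H$ and lift it to elements $h_{1},\dots,h_{p}\in H$. Then $H_{0} = \langle h_{1},\dots,h_{p}\rangle Z$ and $H = \langle h_{1},\dots,h_{p}\rangle(H\cap Z)$, and since $[AC,AC] = [A,A]$ for a subgroup $A$ and a central subgroup $C$, both $[H_{0},H_{0}]$ and $[H,H]$ equal $[\langle h_{1},\dots,h_{p}\rangle,\langle h_{1},\dots,h_{p}\rangle]$; in particular $[H,H] = [H_{0},H_{0}]$. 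Consequently $H_{\text{ab}} = H/[H_{0},H_{0}]$ embeds in $(H_{0})_{\text{ab}}$ with image $\ker\bigl((H_{0})_{\text{ab}}\to\bar Z\bigr) = 0\oplus\Z^{r}$ (surjectivity onto $\Z^{r}$ coming from $H\twoheadrightarrow\bar H$), so $H_{\text{ab}}\cong\Z^{r}$. Being a subgroup of $G$, $H$ is nilpotent, and $[G:H] = [G:H_{0}]\,|\bar Z|<\infty$; this completes the induction.

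The main difficulty is concentrated in this last step. The obvious attempt — replace $H_{0}$ by the kernel of $H_{0}\to\operatorname{tors}\bigl((H_{0})_{\text{ab}}\bigr)$ and hope the abelianization becomes torsion-free — fails in general, because the commutator subgroup of a proper finite-index subgroup is typically properly contained in $[H_{0},H_{0}]$, so torsion reappears, and iterating is not visibly terminating. What makes it work in one step is that the torsion $\bar Z$ of $(H_{0})_{\text{ab}}$ is the image of the central subgroup $Z$, which allows the generators of $\bar H$ to be lifted into $H$ itself and thereby forces $[H,H] = [H_{0},H_{0}]$. A more structural alternative, which one could use instead, is to first reduce to the torsion-free case, realize $G$ as a lattice in its Malcev completion $N$, and invoke the existence of a finite-index subgroup of $G$ adapted to the lower central series of $N$ (a strong Malcev basis lattice); its abelianization is then a lattice in $N/[N,N]\cong\R^{r}$, hence isomorphic to $\Z^{r}$. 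The inductive argument above has the advantage of being elementary, relying only on the standard fact that finite-index subgroups of finitely generated nilpotent groups have finite-index derived subgroups.
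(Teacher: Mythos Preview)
The paper does not actually prove this lemma: it merely attributes it to a MathOverflow post of Cornulier and refers to Appendix~B of \cite{auffinger2021asymptoticshapesstationarypassage} for details. Your argument, by contrast, is a complete and correct self-contained proof.

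Your induction on the nilpotency class is sound. The two nontrivial ingredients are both handled correctly. First, the finiteness of $\bar Z = Z/(Z\cap[H_0,H_0])$: since $Z=G_c$ is central and generated by $c$-fold commutators, multilinearity of the bracket modulo $G_{c+1}=1$ gives $[g_1,\dots,g_c]^{m^c}=[g_1^m,\dots,g_c^m]\in\gamma_c(H_0)\le[H_0,H_0]$ whenever $g_i^m\in H_0$, so $G_c/(G_c\cap[H_0,H_0])$ has bounded exponent and is finite. Second, and more delicate, is the step ensuring $[H,H]=[H_0,H_0]$: your observation that the torsion of $(H_0)_{\text{ab}}$ is exactly the image of the central $Z$ lets you lift generators of $\bar H$ into $H$ itself, forcing the derived subgroups to coincide. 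This is precisely the point at which the ``obvious'' iteration you describe would stall, and your resolution is clean.

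The alternative route you sketch via the Mal'cev completion is closer in spirit to how the cited sources argue (passing to a lattice adapted to the lower central series of the ambient simply connected nilpotent Lie group). Your inductive proof has the merit of staying entirely within elementary group theory, avoiding any appeal to Lie theory or real Mal'cev completions, and would be a welcome addition to the paper in place of a bare citation.
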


\begin{theorem}
    If $G$ is a nilpotent group, either it is virtually cyclic, 
    or the abelianization $G_{\text{ab}}$ has free rank at least $2$.
\end{theorem}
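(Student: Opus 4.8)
The plan is to establish the contrapositive: if the free rank $r$ of $G_{\text{ab}}$ satisfies $r\le 1$, then $G$ is virtually cyclic. Since $G$ is finitely generated, $G_{\text{ab}}$ is a finitely generated abelian group, so $G_{\text{ab}}\cong T\oplus\Z^r$ for some finite abelian group $T$. Applying \Cref{lem:cut} yields a finite-index nilpotent subgroup $H\le G$ with $H_{\text{ab}}\cong\Z^r$, and since a cyclic subgroup of finite index in $H$ has finite index in $G$, it suffices to show that $H$ is virtually cyclic. It is worth noting that this reduction is precisely where the torsion part $T$ is stripped away: without \Cref{lem:cut}, an abelianization such as $\Z\oplus\Z/2$ would still have free rank $1$ but would not be cyclic, and the argument below would have to be replaced by a messier finiteness estimate on the terms of the lower central series of $G$.

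The core of the proof is the sub-claim that a finitely generated nilpotent group $H$ with $H_{\text{ab}}$ cyclic is itself cyclic. I would prove this using the standard fact that the commutator map descends to a well-defined, biadditive, alternating pairing
\[ H/H_2 \times H/H_2 \longrightarrow H_2/H_3, \qquad (\bar a,\bar b)\mapsto [a,b]H_3, \]
whose image generates $H_2/H_3$; well-definedness and biadditivity follow from the commutator identity $[xy,z]=[x,z]^y[y,z]$ together with $[H_2,H]=H_3$, and the alternating property is just $[a,a]=1$. If $H/H_2$ is cyclic, generated by $\tau$, then every value of the pairing is an integer multiple of its value on $(\tau,\tau)$, which is the image of $[t,t]=1$ and hence trivial; so $H_2/H_3$ is trivial, that is, $H_2=H_3$. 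This forces $H_i=H_2$ for all $i\ge 2$, and since $H$ is nilpotent the terms $H_i$ are eventually trivial, so $H_2=1$. Therefore $H$ is abelian, whence $H=H_{\text{ab}}$ is cyclic.

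Applying the sub-claim to $H$ with $H_{\text{ab}}\cong\Z^r$ and $r\le 1$ shows $H$ is cyclic (infinite cyclic if $r=1$, trivial if $r=0$), so $G$ contains $H$ as a cyclic subgroup of finite index and is virtually cyclic, as required. I do not anticipate a genuine obstacle here; the only point demanding care is the bookkeeping behind the commutator pairing on lower central quotients, which is classical. One could instead bypass it, for instance by appealing to the Bass--Guivarc'h growth formula, or by using that $[H,H]$ lies in the Frattini subgroup of the finitely generated nilpotent group $H$, so that a one-element generating set of $H_{\text{ab}}$ lifts to a one-element generating set of $H$.
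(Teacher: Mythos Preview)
Your proof is correct and takes essentially the same approach as the paper: both reduce via \Cref{lem:cut} to a finite-index $H$ with $H_{\text{ab}}\cong\Z^r$ ($r\le 1$), then argue that cyclicity of $H/H_2$ forces $H_2=H_3$ and hence $H_2=1$ by nilpotency. The only cosmetic difference is packaging: the paper phrases the key step as ``$H/H_3$ is a central extension of a group by $\Z$, hence abelian,'' while you phrase it via the alternating commutator pairing $H/H_2\times H/H_2\to H_2/H_3$; these are the same observation.
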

\begin{proof}
    If $G_{\text{ab}}$ is torsion, then by \lemref{lem:cut} there is a finite-index nilpotent subgroup $H\le G$
    with trivial abelianization, i.e. $G$ is virtually trivial and hence finite.
    If $G_{\text{ab}}$ has free rank $1$, then $G_{\text{ab}} = T \oplus \Z$ for some torsion group $T$.
    Then \lemref{lem:cut} gives a finite-index nilpotent subgroup $H\le G$ with 
    $H_{\text{ab}} \cong \Z$. We will show that $H$ is abelian. 
    Consider $K = H/H_3$, which is nilpotent of class at most $2$. 
    It is easy to check that $K_{\text{ab}} \cong H_{\text{ab}} \cong \Z$, 
    and since $K$ is class at most $2$, we have $K' \le Z(K)$,
    Thus $K$ is a central extension of $K'$ by $\Z$, 
    so in fact $K$ is abelian. Since $H/H'$ is the largest abelian quotient of $H$, 
    this implies $H' = H_2 \le H_3$, and so $H_2 = H_3 = \dots$. Since 
    $H$ is nilpotent and the lower central series must terminate, 
    this shows $H' = H_2$ is trivial, so $H \cong H_{\text{ab}} \cong \Z$. Then 
    $H$ is virtually $\Z$.
    In other cases $G_{\text{ab}}$ has free rank at least $2$, so we are done.
\end{proof}

So from now on we assume that $G$ is a finitely generated nilpotent group
where $G_{\text{ab}}$ has free rank at least $2$.
Let $\pi: G \to G_{\text{ab}}$ be the abelianization map
and write $\pi(x) = \overline x$ for $x\in G$.
Since $G_{\text{ab}}$ has free rank at least $2$,
we may choose $a,b\in G \setminus G'$ such that $\overline a, \overline b$
generate a rank-2 free abelian subgroup of $G_{\text{ab}}$, that is a $\Z^2$ in $G_{\text{ab}}$.
We work with some arbitrary finite generating set $S$ that contains $a$ and $b$, and treat $S$ as fixed.
Define the family of words $w_n = (b^{-n}a^{n^2}b^n)a^{-n}(b^na^{n^2}b^{-n})$. Note the similarity to the words used in the proof of \thmref{thm:z2}; the idea is that in the Cayley graph of the abelianization $G_{\text{ab}}$, the words $w_n$ look very similar to the spiral in \figref{fig:spiral}. Therefore the proof of \thmref{thm:z2} can of course show that $G_{\text{ab}}$ is chaotic. But we will argue that upon projecting back up to the nilpotent group $G$, the distances of subwords of $w_n$ cannot change too much, and here we will leverage important facts about nilpotent groups. As a result, we can show that even in $G$, the $w_n$ are quasigeodesic for some suitable constants, and the argument using Ogden's lemma goes through. The use of $n$ and $n^2$ is an important detail, as will become clear.

The proof now proceeds in two parts. First, we show that the $w_n$ 
are uniformly quasigeodesic in $G$. Then we pump using Ogden's lemma, 
and show that the resulting word can be made arbitrarily inefficient.
This will then show that $G$ is chaotic.

\begin{lemma}\label{lem:uniform}
    There exists $\lambda_0$ such that $w_n$ is $(\lambda_0,0)$-quasigeodesic for all $n\ge 2$.
\end{lemma}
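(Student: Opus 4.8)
The plan is to reduce the quasigeodesic inequality in $G$ to the corresponding statement in the abelianization $G_{\text{ab}}$, where we already understand the spiral words via \Cref{lem:tech}. The key point is that the quotient map $\pi\colon G\to G_{\text{ab}}$ is $1$-Lipschitz for the word metrics (since $\pi(S)$ generates $G_{\text{ab}}$), so for any word $v$ we have $d_S(v)\ge d_{\pi(S)}(\overline v)$. Thus a lower bound on displacement in $G_{\text{ab}}$ immediately gives a lower bound on displacement in $G$. Since the words $w_n=(b^{-n}a^{n^2}b^n)a^{-n}(b^na^{n^2}b^{-n})$ project to spiral words in $\overline a,\overline b$ of exactly the form handled by \Cref{lem:tech} (with $q=n$), there is a constant $\lambda_1$ such that every subword $v$ of $w_n$ satisfies $|v|\le\lambda_1 d_{\pi(S)}(\overline v)+0$, \emph{provided} $\overline v\ne 0$, i.e.\ provided the subword is not null-homotopic in the abelianization.

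The remaining difficulty — and the main obstacle — is handling subwords $v$ of $w_n$ with $\overline v=0$, i.e.\ subwords that vanish in $G_{\text{ab}}$ but may represent a nontrivial element of $G'$. For these the abelianization gives no lower bound at all, and we must instead bound $|v|$ against $d_S(v)$ directly using the geometry of the nilpotent group. Here is where the choice of exponents $n$ and $n^2$ matters. A subword of $w_n$ with trivial abelianization must be a product of a bounded number of "syllable runs" whose $a$- and $b$-exponents cancel; a short analysis of the structure of $w_n$ shows such a subword has length $O(n^2)$ and lies in $G'\cap\langle a,b\rangle$ with "commutator-coordinates" bounded by $O(n^2)$ as well (roughly: the enclosed area of the spiral is $O(n^2)$, reflecting the $n\cdot n^2$ shape). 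By Osin's theorem on distortion of subgroups of nilpotent groups (cited in the introduction), or more elementarily by the standard fact that $G'$ is at most quadratically distorted in a nilpotent group of class $2$ — and more generally that the relevant commutator elements of "size $N$" have word length $\gtrsim N^{1/c}$ — an element of $G'$ with commutator-coordinates of size $\Theta(n^2)$ has $d_S$-norm bounded below by a fixed power of $n$. The exponent arithmetic is arranged precisely so that this lower bound still dominates the $O(n^2)$ upper bound on $|v|$ up to a multiplicative constant; this is why $n^2$ (rather than, say, $n^3$) is used alongside $n$.

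Concretely, I would proceed as follows. First, classify the subwords $v$ of $w_n$ into the "generic" case $\overline v\ne 0$ and the "degenerate" case $\overline v=0$; in the generic case apply \Cref{lem:tech} composed with the $1$-Lipschitz projection to get $|v|\le\lambda_1 d_S(v)$. Second, in the degenerate case, enumerate the finitely many "shapes" of balanced subwords of $w_n$ (they are determined by which syllable boundaries $v$ starts and ends at, together with how far into the first and last syllable it reaches), and for each shape compute the element of $G'$ it represents as an explicit word in iterated commutators of $a$ and $b$ with coefficients that are polynomials in $n$ of degree at most $2$. Third, invoke the distortion estimate for $G'$ inside $G$ (Osin / the polynomial growth of nilpotent groups) to bound $d_S(v)$ from below by $c\,n^{2/c'}$ for appropriate constants, where $c'$ is controlled by the nilpotency class; combined with $|v|\le C n^2$ this yields $|v|\le\lambda_2 d_S(v)+\epsilon_0$ for a suitable additive constant absorbing the finitely many small $n$. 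Finally, set $\lambda_0=\max(\lambda_1,\lambda_2)$ (enlarging to swallow the finitely many cases $n<2$ and any bounded additive slack), which gives the claimed uniform $(\lambda_0,0)$-quasigeodesic bound. The one subtlety to be careful about is that the additive constant must genuinely be $0$; this is ensured because, for $n\ge 2$, any nonempty subword $v$ has $d_S(v)\ge 1$, so any bounded additive error can be converted into a multiplicative one by inflating $\lambda_0$.
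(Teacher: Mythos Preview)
Your first paragraph is exactly the paper's proof, and in fact it is the \emph{entire} proof: pass to $G_{\text{ab}}$, use that $\pi$ is $1$-Lipschitz so $d_S(v)\ge d_{\overline S}(\overline v)$, and then invoke \Cref{lem:tech} to get $|u|\le\lambda_0\, d_{\overline S}(\overline u)\le\lambda_0\, d_S(u)$ for every subword $u$. (A small quibble: the $w_n$ here carry $a^{n^2}$ rather than $a^{2q}$, so \Cref{lem:tech} is being applied in spirit, not literally ``with $q=n$''; the same argument works verbatim for this shape.)

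The ``main obstacle'' you identify --- subwords $v$ with $\overline v=0$ --- is a phantom. A $(\lambda,0)$-quasigeodesic word, by definition, has $|u|\le\lambda\, d(u)$ for every subword $u$; in particular $d(u)=0$ forces $|u|=0$. So once you know (via \Cref{lem:tech}) that the image of $w_n$ in $G_{\text{ab}}$ is $(\lambda_0,0)$-quasigeodesic, you already know that no nonempty subword has trivial abelianization. Geometrically this is just the observation that the spiral in $\Z^2$ is a simple path for $n\ge 2$ (and this is precisely why the lemma excludes $n=1$, where the path does revisit the origin). Your entire second and third paragraphs are therefore unnecessary.

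Worse, the argument you sketch for that nonexistent case would not work as stated. You want $|v|=O(n^2)$ and $d_S(v)\gtrsim n^{2/c'}$ to give a bounded ratio, but for nilpotency class $c'>1$ the ratio $|v|/d_S(v)$ would grow like $n^{2(1-1/c')}\to\infty$, destroying uniformity in $n$. (Indeed, later in the paper this exact blow-up --- for the commutator subword $[a^n,b^{n^2}]$ --- is the mechanism that shows $G$ is chaotic; see \Cref{lem:upperbound}.) So the distortion heuristic you invoke points in the wrong direction here. Fortunately you never need it: drop everything after the first paragraph and you have the paper's proof.
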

\begin{proof}
    Pass to the abelianization $G_{\text{ab}}$, with generating set 
    $\overline S = \{ \overline s \mid s\in S \}$.
    Then clearly $d_{\overline S}(\overline z) \le d_S(z)$ for all $z\in G$, 
    because if $z = s_1 s_2 \cdots s_k$ then $\overline z = \overline s_1 \overline s_2 \cdots \overline s_k$.
    We know $\langle \overline a, \overline b \rangle$ is a subgroup isomorphic to $\Z^2$ 
    in the abelian group $G_{\text{ab}}$. It follows from \lemref{lem:tech} that there exists
    $\lambda_0$ such that 
    $w_n$ is $(\lambda_0,0)$-quasigeodesic in $G_{\text{ab}}$ for $n\ge 2$.
    Therefore, for any subword $u$ of $w_n(x,y)$, we have
    \[ \frac{|u|}{d_S(u)} \le \frac{|\overline u|}{d_{\overline S}(\overline u)} \le \lambda_0 \]
    so $w_n$ is also $(\lambda_0,0)$-quasigeodesic in $G$ for $n\ge 2$.
\end{proof}

\begin{lemma} \label{lem:commutator}
  Let $L$ be a CFL containing all $(\lambda_0,0)$-quasigeodesics in $G$.
  Then there exists $N$ such that for all $n\ge N$,
  $L$ contains a word that has the commutator $[a^n,b^{n^2}]$ as a subword.
\end{lemma}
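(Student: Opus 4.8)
The plan is to apply \Cref{cor:ogden} to the word $w_n$ in a way parallel to the proof of \Cref{thm:z2}, but with the two ``spiral arms'' $b^{-n}a^{n^2}b^n$ and $b^na^{n^2}b^{-n}$ playing the role of the rigid prefix, and then to argue that a pumped word must contain the commutator $[a^n,b^{n^2}] = a^{-n}b^{-n^2}a^nb^{n^2}$ as a subword. First I would fix $n$ large (the threshold $N$ to be determined) and set $q = n$; by \Cref{lem:uniform}, $w_n \in L$ since $L$ contains all $(\lambda_0,0)$-quasigeodesics. I apply \Cref{cor:ogden} with $\alpha = b^{-n}a^{n^2}b^n$, $\beta = a^{-n}$, $\gamma = b^na^{n^2}b^{-n}$, provided $|\beta| = n \ge p$ where $p$ is the Ogden constant of $L$; so the first constraint on $N$ is $N \ge p$. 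This yields a decomposition $w_n = uxzyv$ with $ux^kzy^kv \in L$ for all $k \ge 0$, and with at least one of $x$, $y$ contained entirely inside the central block $\beta = a^{-n}$.

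The next step is to track what pumping does to the word. Suppose without loss of generality $x \subseteq \beta$, so $x = a^{-j}$ for some $1 \le j \le n$, and the prefix $\alpha = b^{-n}a^{n^2}b^n$ lies entirely within $u$ (unaffected by pumping). Then $w_k := ux^kzy^kv$ has the form $b^{-n}a^{n^2}b^n a^{-n - j(k-1)} X_k$ for some suffix words $X_k$. Choosing $k-1 \ge n^2/j$, i.e. $k$ large enough, the exponent of $a$ after the $b^n$ block reaches at least $n^2$ in absolute value, so $w_k$ has a prefix of the form $b^{-n} a^{n^2} b^n a^{-n^2} \cdots$. This prefix contains $b^n a^{n^2} b^n a^{-n^2}$... wait, I need to be careful: the commutator I want is $[a^n, b^{n^2}] = a^{-n} b^{-n^2} a^n b^{n^2}$, whereas what naturally appears here is a word in $a$ and $b$ looking like $b^n a^{n^2} b^{-n}$ (from the right arm) conjugated pieces. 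So the cleaner approach is to instead arrange, via the choice of which arm to pump and how to cut $w_n$, that the pumped word literally contains $a^{-n} b^{-n^2} a^n b^{n^2}$. Concretely: applying \Cref{cor:ogden} gives $x$ (say) inside the central $a^{-n}$; pumping $x$ up enough times produces a long run $a^{-M}$ with $M \ge n$, and the letters immediately surrounding this run in $w_n$ are (on the left) the $b^n$ ending the first arm and (on the right) the $b^n$ beginning the second arm. Reading across the central region of $w_k$ we see $\cdots a^{n^2} b^n a^{-M} b^n a^{n^2} \cdots$; taking the subword $b^n a^{-M} b^n$ and noting $M \ge n$, this contains $a^{-n} b^n a^{-n} \cdot$ — hmm, this still is not quite the commutator.

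Let me restructure: the honest claim is that after pumping we obtain a word in $L$ containing a \emph{long spiral} subword of the form $b^{\pm n} a^{\mp M} b^{\pm n} a^{\pm n^2}$ with $M$ arbitrarily large, and the commutator $[a^n, b^{n^2}]$ can be \emph{produced} from such data by a further, separate pumping, OR — more likely what the paper intends — one applies \Cref{cor:ogden} a second time to a different marked region. So the plan is: (1) by \Cref{lem:uniform}, $w_n \in L$; (2) first apply \Cref{cor:ogden} marking $\beta = a^{-n}$ to get pumpability of the central $a$-block, producing $w_k \in L$ whose prefix is $b^{-n} a^{n^2} b^n a^{-M}$ with $M$ as large as desired while $b^{-n}a^{n^2}b^n$ is frozen; (3) choose $M$ so this prefix, read as a path in $G$, returns near the identity or to a controlled point, making the relevant subword contain $[a^n, b^{n^2}]$ after reassociating — specifically the subword $a^{n^2} b^n a^{-n^2} b^{-n}$ of $b^{-n} a^{n^2} b^n a^{-n^2}(b^{-n} \cdots)$, which is $[a^{n^2}, b^{-n}]^{-1}$-ish; and then absorb the sign/order discrepancy by observing $[a^n, b^{n^2}]$ and $[a^{n^2}, b^n]^{\pm}$ differ only by conjugation and inversion, both of which preserve ``being a subword up to the constants we care about'' — or, cleanest, just restate the target of the lemma as ``a conjugate of $[a^{n^2},b^n]^{\pm 1}$'' which is what the downstream argument actually uses.

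\textbf{Main obstacle.} The genuine difficulty is the bookkeeping in step (2)–(3): Ogden's lemma only guarantees that \emph{one} of $x,y$ lies in $\beta$ and does not control the other pumping block $y$, which could sit anywhere in the right arm $\gamma$ and could be a power of $b$ or of $a$; so when I pump I simultaneously distort the right arm, and I must check that the resulting word still contains an \emph{unmodified} copy of the commutator-like subword $a^{n^2} b^n a^{-M} b^{n}$ (or its mirror) — i.e. that the frozen prefix genuinely survives and that $y$ cannot interfere with it. The corollary's conclusion that ``$ux$ is contained in $\alpha\beta$'' is exactly what rescues this: it forces $y$ (and $v$) into the right arm $\gamma$, so the left arm plus the pumped central $a$-run is untouched, and pumping $y$ downward to $y^0$ if necessary only shortens $\gamma$ without affecting the prefix. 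Making this precise — and pinning down the exact algebraic identity relating the surviving subword to $[a^n,b^{n^2}]$ so that the exponents match ($n$ versus $n^2$) — is where the care goes; everything else is the same spiral-pumping mechanism as in \Cref{thm:z2}.
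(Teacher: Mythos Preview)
Your approach is exactly the paper's: its entire proof of this lemma is the single sentence ``This is the same proof as for $\Z^2$ in \Cref{thm:z2}; $N$ will have to be chosen greater than the Ogden constant of $L$.'' So apply \Cref{cor:ogden} to $w_n$ with $\alpha = b^{-n}a^{n^2}b^n$, $\beta=a^{-n}$, $\gamma=b^na^{n^2}b^{-n}$ (valid once $n\ge p$), pump the block inside $\beta$, and read off a long subword.

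Your confusion about the exact commutator is warranted, and you are being more careful than the paper. With the convention $[a,b]=a^{-1}b^{-1}ab$, the word $[a^n,b^{n^2}]=a^{-n}b^{-n^2}a^nb^{n^2}$ has sign pattern $a^-,b^-,a^+,b^+$, which simply does not occur in $w_n$ or in any pump of it (the only $a^-$ block is flanked by $b^+$ on both sides). What the thm:z2 argument actually produces, in the case $x\subseteq\beta$, is the prefix $b^{-n}a^{n^2}b^na^{-n^2}=[b^n,a^{-n^2}]$, and in the symmetric case the mirror commutator. So the lemma as literally stated is imprecise; the honest conclusion is ``$L$ contains a word with subword $[b^n,a^{-n^2}]$ (or its palindromic mirror).'' This is harmless downstream: \Cref{lem:upperbound} is proved via the collecting process for $[b^{n^2},a^n]$ and the bound $d(\,\cdot\,)=o(n^2)$ holds equally for any of these variants (same word length $\Theta(n^2)$, same weight structure, and $d(g)=d(g^{-1})$). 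So do not try to manufacture $[a^n,b^{n^2}]$ exactly; just record the commutator that actually appears and note the discrepancy.

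Your worry about the second pumping block $y$ is also already resolved by your own observation: in the case where $x\subseteq\beta$, \Cref{cor:ogden} guarantees $\alpha\subseteq u$, so $ux^k=\alpha\,a^{-(i+jk)}$ is a \emph{prefix} of $ux^kzy^kv$ regardless of where $y$ sits or what pumping does to it. Taking $k$ large enough that $i+jk\ge n^2$ gives the prefix $b^{-n}a^{n^2}b^na^{-n^2}$ and you are done; no second application of Ogden and no control on $y$ is needed.
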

\begin{proof}
  This is the same proof as for $\Z^2$ in \thmref{thm:z2}. $N$ will have to 
  be chosen greater than the Ogden constant of $L$.
\end{proof}

\begin{definition}
  A basic commutator over a set $S$ is defined inductively as follows: 
  \begin{itemize}
    \item All elements of $S$ are basic commutators of weight 1.
    \item If $u,v$ are basic commutators of weights $r,s$ respectively, 
    then $[u,v]$ is a basic commutator of weight $r+s$
  \end{itemize}
\end{definition}

\begin{corollary}\label{cor:weight}
    If a basic commutator $c$ has weight $k$, then $c\in G_k$ 
    (here we abuse notation by identifying $c$ with its value in $G$).
\end{corollary}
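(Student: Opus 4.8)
The plan is to induct on the weight $k$ of the basic commutator $c$, with the inductive step resting on the classical containment $[G_i, G_j] \subseteq G_{i+j}$ between terms of the lower central series.

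First I would recall this containment. Fix $i \ge 1$ and argue by induction on $j$ that $[G_i, G_j] \subseteq G_{i+j}$ for every $i$. The case $j=1$ is immediate from the definition $G_{i+1} = [G_i, G]$. For $j \ge 2$, write $G_j = [G, G_{j-1}]$ (using that $[A,B]=[B,A]$ as subgroups, which holds since $[a,b]^{-1} = [b,a]$) and apply the Three Subgroups Lemma to the subgroups $G_i$, $G$, $G_{j-1}$ with the normal subgroup $N = G_{i+j}$. Two of the three relevant iterated commutators lie in $N$ by the inductive hypothesis (which applies since $j-1 < j$): indeed $[[G_{j-1}, G_i], G] \subseteq [G_{i+j-1}, G] = G_{i+j}$, and $[[G_i, G], G_{j-1}] = [G_{i+1}, G_{j-1}] \subseteq G_{(i+1)+(j-1)} = G_{i+j}$. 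Hence the third, $[[G, G_{j-1}], G_i] = [G_i, G_j]$, also lies in $N$, as desired. (This is a standard fact about nilpotent groups and may alternatively be cited from any group theory reference.)

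With this in hand, the corollary follows by a one-line induction on weight. If $c$ has weight $k=1$, then $c \in S \subseteq G = G_1$. If $c$ has weight $k \ge 2$, then by definition $c = [u,v]$ for basic commutators $u, v$ of weights $r, s$ with $r+s = k$ and $r, s \ge 1$; the inductive hypothesis gives $u \in G_r$ and $v \in G_s$, so $c = [u,v] \in [G_r, G_s] \subseteq G_{r+s} = G_k$.

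The only genuine work is the containment $[G_i, G_j] \subseteq G_{i+j}$, which requires the Three Subgroups Lemma (equivalently the Hall--Witt identity) together with the double induction above; once that is granted, the statement is immediate. I expect this to be the main, though entirely standard, obstacle.
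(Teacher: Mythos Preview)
Your proof is correct and follows the same approach as the paper: induct on weight using the standard containment $[G_i,G_j]\subseteq G_{i+j}$, with base case $G_1=G$. The only difference is that you supply a proof of $[G_i,G_j]\subseteq G_{i+j}$ via the Three Subgroups Lemma, whereas the paper simply cites it as a standard identity.
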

\begin{proof}
    This follows by induction from the standard identity $[G_i, G_j] \le G_{i+j}$, 
    and the base case $G_1 = G$.
\end{proof}

The following theorem is an application of a symbolic rearrangement
process described by Hall in \cite{hall_edmonton_notes_2025}. 
The form we use is due to Struik \cite{struik}.

\begin{theorem}[Hall's collecting process \cite{struik}]\label{thm:collecting}
    Let $a,b$ be elements of a nilpotent group $G$.
    Then for any $m,n \ge 0$ we have the identity
    \[ b^m a^n = a^n b^m  [b,a]^{nm} [[b,a], a]^{\binom{n}{2}m} [[b,a], b]^{n\binom{m}{2}} \dots u_i^{f_i} \dots \]
    where the $u_i$ are fixed basic commutators in $\{a,b\}$ independent of $m,n$. 
    Further, if $u_i$ is a basic commutator involving $r$ $a$'s and $s$ $b$'s, then 
    \[ f_i = \sum_{j\le r, k \le s} c_{i,j,k} \binom{n}{j} \binom{m}{k} \]
    for some nonnegative integers $c_{i,j,k}$. In particular, the exponent of 
    such a $u_i$ is a polynomial in $n,m$ of highest degree term $Cn^{r}m^s$ for some $C>0$.
    \label{struik}
\end{theorem}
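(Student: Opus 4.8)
The plan is to prove the identity inside the \emph{free} nilpotent group, where the basic commutators form a genuine basis, and then specialise to $G$. Let $c$ be the nilpotency class of $G$ and let $F$ be the free nilpotent group of class $c$ on two generators, which we also call $a,b$; the homomorphism $F\to G$ sending generators to $a,b$ takes any word identity of $F$ to one of $G$ (basic commutators of weight $>c$ being trivial in $G$ by \Cref{cor:weight}), so it suffices to argue in $F$. Enumerate the basic commutators in $\{a,b\}$ of weight at most $c$ as $u_1<u_2<\dots<u_N$ in a fixed \emph{collecting order} with weights non-decreasing, so $u_1=a$, $u_2=b$, $u_3=[b,a]$, $u_4=[[b,a],a]$, $u_5=[[b,a],b]$, and so on. By the Hall basis theorem every element of $F$ has a unique normal form $u_1^{e_1}\cdots u_N^{e_N}$ with $e_i\in\Z$, and each $F_k$ is free abelian on the weight-$k$ basic commutators; applying this to $b^ma^n$ and reading off the abelianization $F/F_2$ gives $e_1=n$, $e_2=m$. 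The theorem is then the assertion that, for $u_i$ of weight $\ge 2$ with $r$ occurrences of $a$ and $s$ of $b$, the exponent $e_i=e_i(n,m)$ is a nonnegative-integer combination $\sum_{j\le r,\,k\le s}c_{i,j,k}\binom{n}{j}\binom{m}{k}$ with $c_{i,r,s}>0$.

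To compute the $e_i$ I would run Hall's collecting process on $\underbrace{b\cdots b}_{m}\underbrace{a\cdots a}_{n}$: repeatedly locate the leftmost adjacent out-of-order pair $u_ju_i$ ($j>i$) and replace it by $u_iu_j[u_j,u_i]$, where $[u_j,u_i]$ is itself rewritten once and for all as a fixed word in the $u_k^{\pm1}$ with $\mathrm{wt}(u_k)\ge\mathrm{wt}(u_i)+\mathrm{wt}(u_j)$ (obtained by applying the basis theorem rung by rung up the lower central series). Each move replaces two letters by letters of strictly larger weight placed further right, and weights are capped at $c$, so the process terminates---Hall's termination argument \cite{hall_edmonton_notes_2025}---at the normal form, and $e_i(n,m)$ is the net number of copies of $u_i$ created during the run; in particular it depends only on $u_i$, not on $G$.

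The main obstacle is the combinatorial bookkeeping identifying this count with the stated polynomial. I would organise it as an induction on the class $c$: the case $c=1$ is $b^ma^n=a^nb^m$, and for the step one reduces modulo the central subgroup $F_c$, where by induction all exponents of weight-$\le c-1$ commutators already have the asserted form, leaving a residue $z\in F_c$ whose coordinates (in the free-abelian basis of weight-$c$ basic commutators) are analysed by inspecting the finitely many collection moves that produce weight-$c$ letters. The essential point, which is exactly Struik's contribution \cite{struik} refining Hall's process, is that every copy of a basic commutator $u_i$ created during collection can be traced back to a definite choice of at most $r$ of the original $a$-letters and at most $s$ of the original $b$-letters absorbed into it (strict inequality being possible because rewriting a bracket $[u_j,u_i]$ into basis elements can raise its content); because in $b^ma^n$ every $b$ precedes every $a$, any such selection of letters sits in the same relative order, so grouping copies of $u_i$ by the set of letters they come from yields $e_i(n,m)=\sum_{j\le r,\,k\le s}c_{i,j,k}\binom{n}{j}\binom{m}{k}$ with each $c_{i,j,k}$ a nonnegative integer (a count of ``shapes''). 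One checks separately that some shape consumes all $r$ and all $s$ available letters, forcing $c_{i,r,s}\ge 1$ and hence the highest-degree term $Cn^rm^s$ with $C=c_{i,r,s}/(r!\,s!)>0$; the displayed cases $[b,a]^{nm}$, $[[b,a],a]^{\binom n2 m}$, $[[b,a],b]^{n\binom m2}$ are the first instances of this pattern. The closure needed to run the induction---that sums and products of such polynomials stay of the same type---follows from the elementary identity expressing $\binom{n}{j}\binom{n}{j'}$ as a nonnegative-integer combination of $\binom{n}{\ell}$ with $\ell\le j+j'$ (count ordered pairs of subsets of an $n$-set by the size of their union), similarly in $m$, together with the additivity of $a$- and $b$-content under forming the leading parts of commutators.

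As an alternative for the polynomiality and the monomial nature of the top-bidegree parts, one may pass to the Mal'cev completion, writing $b^ma^n=\exp(mY)\exp(nX)$ and expanding $\log(b^ma^n)=\mathrm{BCH}(mY,nX)=\sum_{p,q}n^pm^qL_{p,q}$ with fixed $L_{p,q}$ in the bidegree-$(p,q)$ part of the free nilpotent Lie algebra, then converting to Hall-basis coordinates by the unitriangular polynomial change of variables; but pinning down the precise support $\{j\le r,\ k\le s\}$ and the nonnegativity of the $c_{i,j,k}$ still requires the combinatorial argument above, which is where the real work lies. In a paper this theorem would most naturally just be quoted from Struik \cite{struik} (or M.~Hall's exposition of the collecting process), the above being a recapitulation of that argument.
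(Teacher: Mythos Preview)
The paper does not give a proof of this theorem: it is stated with attribution to Hall \cite{hall_edmonton_notes_2025} and Struik \cite{struik} and then used as a black box (see the sentence immediately preceding the statement). Your final paragraph already anticipates this, and your sketch---reduce to the free nilpotent group via the Hall basis theorem, run the collecting process, and track the $a$-content and $b$-content of each produced basic commutator to obtain the binomial-polynomial form of the exponents---is a faithful outline of the argument in the cited references. So there is no divergence to report: the paper's ``proof'' is the citation, and you have recapitulated what that citation contains.
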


Note that the above expansion is necessarily a finite product since 
commutators of weight greater than the nilpotency class of $G$ vanish.
After rearranging, we get an expression for $[b^m, a^n]$, which is what we 
will use.

\begin{definition}
  Let $H$ be a subgroup of $G$, with finite generating sets $T,S$ respectively.
  The distortion function of $H$ in $G$ is 
  \[ \Delta_H^G(n) = \max \{ d_T(h) \mid h\in H, d_S(h) \le n \} \]
  where $d_T, d_S$ are the word metrics on $H,G$ respectively.
\end{definition}

\begin{definition}
  Let $g\in G$ be an element of infinite order.
  The \emph{weight} $v_G(g)$ of $g$ in $G$ is the largest $k$ such that 
  $\langle g \rangle \cap G_k \neq \{1\}$.
\end{definition}

The next theorem was given by Osin in \cite{osin} (we use a weaker 
version here: $\Omega$ can be replaced by $\Theta$).
\begin{theorem}[Subgroup distortion \cite{osin}]\label{thm:subgroupdistortion}
   Let $H$ be a subgroup of a finitely generated nilpotent group $G$, 
   and $H^0$ be the set of elements of infinite order in $H$.
   Then we have $\Delta_H^G(n) = \Omega(n^r)$, where
    \[ r = \max_{h\in H^0} \frac{v_G(h)}{v_H(h)}. \]
\end{theorem}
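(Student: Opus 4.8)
The plan is to reduce the theorem to a sharp two-sided estimate for the distortion of a \emph{cyclic} subgroup, and then to optimize the exponent. Since the statement only asserts the lower bound $\Delta_H^G(n)=\Omega(n^r)$ (the matching upper bound is Osin's sharper result, which we do not need), it suffices to exhibit, for each $N$, an element of $H$ of $S$-length at most $N$ and $T$-length $\gtrsim N^r$.

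\textbf{Step 1 (reduction to cyclic subgroups).} Assume $H$ is infinite, so $H^0\neq\emptyset$; the ratio $v_G(h)/v_H(h)$ takes only finitely many values, so the maximum is attained at some $h\in H^0$. Set $p:=v_G(h)$ and $q:=v_H(h)$, so $r=p/q$; since $H_k\le G_k$ for all $k$, $\langle h\rangle\cap H_k\le\langle h\rangle\cap G_k$, hence $q\le p$ and $r\ge 1$. The key estimate I would establish is: for any finitely generated nilpotent group $(K,U)$ and any $g\in K$ of infinite order,
\[ d_U(g^n)\;\asymp\; n^{1/v_K(g)}\qquad(n\to\infty), \]
with implied constants depending on $(K,U)$ and $g$. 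Granting this, apply its \emph{upper} half in $(G,S)$ and its \emph{lower} half in $(H,T)$: given $N$, let $n$ be maximal with $d_S(h^n)\le N$; then $n\gtrsim N^{p}$, so $d_T(h^n)\gtrsim n^{1/q}\gtrsim N^{p/q}=N^{r}$, and therefore $\Delta_H^G(N)\ge d_T(h^n)\gtrsim N^{r}$.

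\textbf{Step 2 (upper half of the key estimate).} Write $p=v_K(g)$; by definition some $g^m$ ($m\ge 1$) lies in $K_p$ (indeed in $K_p\setminus K_{p+1}$). It suffices to show $d_U(c^k)=O(k^{1/p})$ for every $c\in K_p$, since then $d_U(g^n)\le d_U((g^m)^{\lfloor n/m\rfloor})+O(1)=O(n^{1/p})$. For this I would invoke Hall's collecting process (\Cref{thm:collecting}): its substance is that the iterated bracket $K_{\mathrm{ab}}\times\cdots\times K_{\mathrm{ab}}\to K_p/K_{p+1}$ ($p$ factors) is multilinear, so a $p$-fold basic commutator $b=[g_1,\dots,g_p]$ satisfies $b^{t^p}\equiv[g_1^{\,t},\dots,g_p^{\,t}]\pmod{K_{p+1}}$, and the right side is a word of length $O(t)$ by the definition of the bracket. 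Combining this with the standard fact that $K_p$ is generated by basic commutators of weight $\ge p$, and an induction on the nilpotency class to control the $K_{p+1}$-error (a power up to $t^p$ of weight-$(p{+}1)$ material, hence of length $O(t^{p/(p+1)})=o(t)$), yields $d_U(c^{t^p})=O(t)$ and hence $d_U(c^k)=O(k^{1/p})$.

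\textbf{Step 3 (lower half of the key estimate — the crux).} Here I need $d_U(g^n)\gtrsim n^{1/p}$ with $p=v_K(g)$. Passing to $K/K_{p+1}$ is distance non-increasing and preserves both $v_K(g)=p$ (since $g^m\notin K_{p+1}$) and the infinite order of $g$, so I may assume $K$ has class $p$ and $c:=g^m$ is a \emph{central} infinite-order element of $K_p$. I see two routes. (i) \emph{Malcev completion and dilations}: pass to a finite-index torsion-free subgroup, realize it as a lattice in a simply connected nilpotent Lie group whose word metric is quasi-isometric to a Carnot–Carathéodory metric carrying dilations $\delta_t$ that scale the $K_i/K_{i+1}$-layer by $t^i$ and the metric by $t$; as $c$ lies in the degree-$p$ layer, $\delta_t$ sends a bounded neighbourhood of $c^k$ into one of $c^{kt^{p}}$, forcing $d_U(c^{kt^{p}})\asymp t\,d_U(c^k)$ and hence $d_U(c^n)\asymp n^{1/p}$. (ii) \emph{Growth functions}: use Bass–Guivarc'h, $|B_U(R)|\asymp R^{D}$ with $D$ the homogeneous dimension of $K$, together with the fact that killing the isolator of $\langle c\rangle$ lowers the homogeneous dimension by exactly $p$ (the weight of $c$); if $d_U(c^k)\le R$ for all $|k|\le K_R$, then multiplying a length-$\le R$ transversal of the isolator quotient by these powers embeds $\gtrsim R^{D-p}\,K_R$ distinct elements into $B_U(2R)$, forcing $K_R\lesssim R^{p}$. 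I expect Step 3 to be the main obstacle: exhibiting short words (Step 2) is elementary commutator calculus, but the lower bound genuinely needs either Lie-theoretic input (Malcev/Pansu) or the Bass–Guivarc'h growth theorem plus the dimension-counting bookkeeping that makes the exponent come out to exactly $p$.
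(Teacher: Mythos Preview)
The paper does not prove this theorem at all: it is quoted from Osin \cite{osin} and used as a black box (only the special case $H=\langle g\rangle$ is needed, to derive \Cref{cor:upperbound}). So there is no ``paper's own proof'' to compare against.

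Your sketch is a correct outline of one standard proof of the lower bound. The reduction in Step~1 is sound: the maximum is realised at some $h$ because both $v_G$ and $v_H$ are bounded by the nilpotency class, and the two-sided estimate $d_U(g^n)\asymp n^{1/v_K(g)}$ is exactly what is needed. Step~2 is the easy direction and your Hall-collecting argument is right in spirit; the induction on class to absorb the $K_{p+1}$-error (exponents $O(t^p)$ on weight-$(p{+}1)$ commutators, hence length $O(t^{p/(p+1)})$) is the correct bookkeeping. You are also right that Step~3 is the substantive part. Route~(i) via Malcev completion and Carnot dilations is the cleanest and is essentially how Osin (and Pansu before him) proceed; route~(ii) via Bass--Guivarc'h works too, though your pigeonhole needs to be phrased as: the ball $B_K(R)$ surjects onto $B_{K/\langle c\rangle^{\mathrm{iso}}}(R)$ of size $\asymp R^{D-p}$, so some fibre contains $\gtrsim R^{p}$ elements, and any two elements of that fibre differ by a power $c^k$ with $d_U(c^k)\le 2R$, giving $\gtrsim R^{p}$ distinct such powers. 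Either way you must first pass to a torsion-free finite-index subgroup so that isolators and ranks behave, which you implicitly do in~(i) but should also note for~(ii).
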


\begin{corollary}\label{cor:upperbound}
  Let $g\in G_k$. Then $d(g^n) = O(n^{1/k})$.
\end{corollary}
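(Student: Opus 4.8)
The plan is to deduce this from Osin's subgroup distortion theorem (Theorem~\ref{thm:subgroupdistortion}) applied to the cyclic subgroup $H = \langle g \rangle$, after dealing with the torsion case separately. First I would observe that if $g$ has finite order, then $g^n$ ranges over a finite set, so $d(g^n)$ is bounded and the conclusion is trivial; hence we may assume $g$ has infinite order. Now apply Theorem~\ref{thm:subgroupdistortion} with $H = \langle g \rangle \cong \Z$. Since $g \in G_k$, we have $\langle g \rangle \cap G_k \neq \{1\}$, so by definition $v_G(g) \ge k$; on the other hand $v_H(g) = v_{\langle g\rangle}(g) = 1$ since $\langle g\rangle_1 = \langle g\rangle$ and $\langle g \rangle_2 = \{1\}$ (a cyclic, hence abelian, group). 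Thus the exponent $r = v_G(g)/v_H(g) \ge k$.

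The subtlety is that Osin's theorem as stated gives a \emph{lower} bound $\Delta_{\langle g\rangle}^G(m) = \Omega(m^r)$, whereas we want an \emph{upper} bound on $d(g^n)$ — so a naive direct application goes the wrong way. The point is to invert the relationship: $\Delta_H^G(m) = \max\{d_T(h) : d_S(h) \le m\}$ measures how large the $H$-word-length can be for elements of bounded $G$-word-length. For $H = \langle g\rangle$ with $d_T$ the cyclic word metric, $d_T(g^n) = \min(|n|, \ldots)$ is comparable to $|n|$, so $\Delta_{\langle g\rangle}^G(m) \ge |n|$ precisely when there exists $g^n$ with $d_S(g^n) \le m$ and $|n| \approx \Delta_{\langle g\rangle}^G(m)$. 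What we actually need is the dual statement, namely an upper bound of the form $\Delta_{\langle g\rangle}^G(m) = O(m^r)$ would give, upon inversion, $d(g^n) = \Omega(n^{1/r})$ — still not quite what we want. Instead, the correct reading is: Osin's lower bound $\Delta \gtrsim m^r$ says there are $n$ with $|n| \gtrsim m^r$ and $d_S(g^n) \le m$; re-parametrising, for such $n$ we get $d_S(g^n) \le m \lesssim |n|^{1/r} \le |n|^{1/k}$. So the main obstacle — and the step requiring care — is checking that this $\Omega$-bound is witnessed by a \emph{cofinal} sequence of $n$ and then upgrading it to a bound valid for \emph{all} $n$: one uses that the map $n \mapsto d_S(g^n)$ is subadditive (since $g^{n_1+n_2} = g^{n_1}g^{n_2}$, so $d_S(g^{n_1+n_2}) \le d_S(g^{n_1}) + d_S(g^{n_2})$), and a subadditive function that is $O(n^{1/k})$ along a sequence of controlled density is $O(n^{1/k})$ everywhere.

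Concretely, the steps in order are: (1) dispatch the torsion case; (2) set $H = \langle g\rangle$, compute $v_H(g) = 1$ and $v_G(g) \ge k$, so the Osin exponent is $r \ge k$; (3) translate the $\Omega(m^r)$ distortion lower bound into the assertion that for each $m$ there is some $n = n(m)$ with $|n| \ge c\,m^r$ and $d_S(g^n) \le m$, i.e. $d_S(g^{n(m)}) \le C' |n(m)|^{1/r} \le C' |n(m)|^{1/k}$ for a uniform constant; (4) use subadditivity of $n \mapsto d_S(g^n)$ together with the fact that the witnessing values $n(m)$ are cofinal and grow polynomially (so consecutive ones have ratio bounded independently of $m$) to interpolate and conclude $d_S(g^n) = O(n^{1/k})$ for all $n \ge 1$. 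The bookkeeping in step (4) — making the density/ratio control explicit — is the only genuinely fiddly part; everything else is a direct unwinding of definitions.
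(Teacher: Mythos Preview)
Your approach matches the paper's exactly: handle torsion separately, then apply Osin's theorem to $H=\langle g\rangle$ with $v_H(g)=1$ and $v_G(g)\ge k$. The paper's own proof is the one-liner ``$\Delta_{\langle g\rangle}^G(n)=\Omega(n^k)$, which implies $d(g^n)=O(n^{1/k})$'', so you have in fact been more scrupulous than the paper in flagging that this implication needs justification.

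However, your step~(4) has a genuine gap. From the $\Omega$ bound alone you know only $n(m)\ge c\,m^r$; there is no upper bound on $n(m)$, so the assertion that the witnesses ``grow polynomially (so consecutive ones have ratio bounded)'' is unjustified --- nothing rules out $n(m)$ being, say, doubly exponential in $m$. In fact the desired implication fails for abstract subadditive functions: take $S=\{1\}\cup\{a_k:k\ge 1\}$ with $a_{k+1}>a_k^{3}$ and let $f(n)$ be the least number of elements of $S$ (with repetition) summing to $n$; then $f$ is subadditive and every $m$ admits a witness $n\ge m^2$ with $f(n)=1$, yet $f(a_k-1)/\sqrt{a_k-1}\to\infty$. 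The fix is to use the full strength of Osin's result, which (as the paper itself notes) is $\Delta=\Theta(n^r)$, not merely $\Omega$. With the two-sided bound one may take $n(m)=\Delta(m)\in[c\,m^r,C\,m^r]$; given $N$, choosing $m\approx (N/C)^{1/r}$ produces a witness $n(m)$ with $n(m)\le N$ and $n(m)\ge \rho' N$ for some fixed $\rho'>0$, so writing $N=q\,n(m)+s$ gives $q$ bounded and $s\le (1-\rho')N$, and a geometric-decrease induction then yields $d(g^N)=O(N^{1/r})\subseteq O(N^{1/k})$. Alternatively, and more directly, one may cite the pointwise estimate $d_S(g^n)\asymp n^{1/v_G(g)}$ for infinite-order $g$ in a nilpotent group, which is what Osin's argument actually establishes en route and which makes the corollary immediate.
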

\begin{proof}
  If $g$ has infinite order, apply Osin's theorem to the subgroup $\langle g \rangle \cong \Z$ of $G$, noting that 
  $v_{\langle g \rangle}(g) = 1$ because $\langle g \rangle$ is abelian, and $v_G(g) \ge k$. This gives the 
  distortion function $\Delta_{\langle g \rangle}^G(n) =  \Omega(n^k)$, which implies $d(g^n) = O(n^{1/k})$.
  If $g$ has finite order, then $g^n$ takes only finitely many values, so $d(g^n)$ is bounded and clearly $O(n^{1/k})$ for 
  all $k\ge 1$.
\end{proof}

To motivate the next proof, consider the discrete Heisenberg group, which has nilpotency class $2$ and may be defined as 
$ H = \langle x, y \mid [[x,y],x] = [[x,y],y] = 1\rangle$.
Let $t = [x,y]$; then using that $t,x$ and $t,y$ commute in $H$, we have $[x^{d_1},y^{d_2}] = t^{d_1d_2}$ for all $d_1, d_2\ge 0$.
It is helpful to think of $[\overline{x}^{d_1},\overline{y}^{d_2}]$ tracing out a rectangle in $H_{ab}\cong \langle \overline{x}, \overline{y}\rangle \cong \Z^2$, and $d_1d_2$ as the area of the rectangle. \corref{cor:upperbound} is now apparent: the displacement of $t^n$ is proportional to $\sqrt{n}$. 
In $H_{ab}$, the image of the word $[x^n,y^{n^2}]$
traces a $n\times n^2$ rectangle, covering an area of 
$n^3$. Such a rectangular path is \emph{inefficient} 
because we can cover the same area with a lower perimeter, 
by using a square path instead. This is why $[x^n,y^{n^2}]$
is an arbitrarily bad quasigeodesic in $H$ for large $n$. While the analogy to the area enclosed by a rectangle does not hold in generality, \thmref{thm:collecting} approximates this well enough for the following upper bound on the displacement of $[a^n,b^{n^2}]$ in $G$.

\begin{lemma}\label{lem:upperbound}
    We have $d([a^n,b^{n^2}]) = o(n^2)$.
\end{lemma}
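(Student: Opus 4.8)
The plan is to unfold the commutator $[a^{n},b^{n^{2}}]$ into a finite product of basic commutators via Hall's collecting process, and then estimate each factor separately using the distortion bound of \Cref{cor:upperbound}. The saving that yields $o(n^{2})$ rather than just $O(n^{2})$ will come from the fact that every basic commutator that appears involves at least one copy of $a$, which keeps all the relevant exponents strictly below $2$.

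First I would invoke \Cref{thm:collecting} with $m=n^{2}$. Since $[a^{n},b^{n^{2}}]=[b^{n^{2}},a^{n}]^{-1}$, the rearrangement noted just after \Cref{thm:collecting} produces an identity
\[
  [a^{n},b^{n^{2}}]=\prod_{i} u_i^{\,g_i(n)},
\]
a finite product (finite because basic commutators of weight larger than the nilpotency class of $G$ are trivial), where each $u_i$ is a fixed basic commutator over $\{a,b\}$ of some weight $w_i\ge 2$, involving $r_i$ letters $a$ and $s_i$ letters $b$ with $r_i+s_i=w_i$, and $g_i(n)=f_i(n,n^{2})$ with $f_i$ as in \Cref{thm:collecting}. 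Two features of this expansion do the work. First, a basic commutator of weight $\ge 2$ over the two-letter set $\{a,b\}$ must involve \emph{both} letters: the unique weight-$2$ basic commutator is $[b,a]$ (up to the chosen ordering), and the property propagates through the inductive definition, so $r_i\ge 1$ and $s_i\ge 1$ for every $i$. Second, the degree formula in \Cref{thm:collecting} says each $f_i$ is a nonnegative integer combination of terms $\binom{n}{j}\binom{m}{k}$ with $j\le r_i$, $k\le s_i$; substituting $m=n^{2}$ shows $\deg_n g_i\le r_i+2s_i$, hence $|g_i(n)|=O(n^{\,r_i+2s_i})$.

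Next, $u_i\in G_{w_i}$ by \Cref{cor:weight}, so \Cref{cor:upperbound} (together with the symmetry of $d$ under inversion, which lets us ignore the sign of $g_i$) gives
\[
  d\bigl(u_i^{\,g_i(n)}\bigr)=O\bigl(|g_i(n)|^{1/w_i}\bigr)=O\bigl(n^{(r_i+2s_i)/w_i}\bigr).
\]
The exponent equals $1+\frac{s_i}{r_i+s_i}$, which is strictly below $2$ precisely because $r_i\ge 1$. As the product has only finitely many factors, $\rho:=\max_i (r_i+2s_i)/w_i<2$, and the triangle inequality finishes the argument:
\[
  d\bigl([a^{n},b^{n^{2}}]\bigr)\le \sum_i d\bigl(u_i^{\,g_i(n)}\bigr)=O(n^{\rho})=o(n^{2}).
\]

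The step I expect to need the most care is the bookkeeping around exactly which basic commutators occur in the collecting expansion and why each must contain an $a$: the entire gain over the trivial estimate rests on the exponents $(r_i+2s_i)/w_i$ being uniformly bounded away from $2$, which would fail if a commutator built purely from $b$'s could appear. Once that point is pinned down, the remaining steps are a mechanical combination of \Cref{thm:collecting}, \Cref{cor:weight}, and \Cref{cor:upperbound}, with only routine degree arithmetic. (Note that we do not need here the hypothesis that $\overline a,\overline b$ span a $\Z^2$ in $G_{\mathrm{ab}}$; it is only the matching lower bound, proved separately, that uses this.)
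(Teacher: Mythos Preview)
Your proof is correct and follows essentially the same route as the paper: expand $[a^n,b^{n^2}]$ via \Cref{thm:collecting}, bound each factor using \Cref{cor:weight} and \Cref{cor:upperbound}, and sum with the triangle inequality. The paper records the key inequality as $r_i+2s_i\le 2W_i-1$, yielding $d(u_i^{f_i})=O(n^{2-1/W_i})$; your formulation $(r_i+2s_i)/w_i=1+s_i/w_i<2$ is the same bound, and you are more explicit than the paper about \emph{why} it holds, namely that every basic commutator appearing in the Struik expansion carries at least one $a$ (so $r_i\ge1$). That is exactly the point the argument hinges on, and your flagging of it is appropriate; one could also observe, as a shortcut, that any iterated commutator built solely from $b$'s is trivially the identity and so cannot contribute.
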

\begin{proof}
  From \thmref{thm:collecting}, we have the identity 
  \[ [b^{n^2}, a^n] = [b,a]^{n^3} [[b,a],a]^{\binom{n}{2} n^2} [[b,a],b]^{n\binom{n^2}{2}} \dots u_i^{f_i} \dots   \]
  where the $u_i$ are basic commutators in $\{a,b\}$, and the $f_i$ are polynomials in $n$. 
  Further, if $u_i$ has weight $W_i = r_i+s_i$, with $r_i$ $a$'s and $s_i$ $b$'s appearing in it,
  then $f_i$ has highest degree term of the form $C n^{r_i} (n^2)^{s_i} = C n^{r_i+2s_i}$ for some $C>0$. 
  Since $r_i + 2s_i \le 1 + 2(W_i-1) = 2W_i - 1$, we have $f_i = O(n^{2W_i-1})$. 
  Therefore, because $u_i \in G_{W_i}$ from \corref{cor:weight}, we may apply \corref{cor:upperbound} to get
  \[ d(u_i^{f_i}) = O(f_i^{1/W_i}) = O(n^{2 - 1/W_i}) . \] 
    Now by the triangle inequality, 
  \[ d([a^n,b^{n^2}]) \le \sum_{i} d(u_i^{f_i}) =O(n^{2 - 1/\max_i W_i}) = o(n^2)  \]
  since we are dealing with a finite sum.
\end{proof}

We put together three facts to complete the proof: 
the uniform quasigeodesicity of $w_n$
from \lemref{lem:uniform}, the upper bound on $d([a^n,b^{n^2}])$ from \lemref{lem:upperbound},
and the fact that the word length of 
$[a^n,b^{n^2}]$ is $\Theta(n^2)$.

\begin{theorem}\label{thm:nilpotent}
  Let $G$ be a finitely generated nilpotent group that is not virtually cyclic.
  Then $G$ is chaotic.
\end{theorem}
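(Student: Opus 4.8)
The plan is to show that $(G,S)$ is chaotic at $(\lambda_0,0)$, where $\lambda_0$ is the constant produced by \Cref{lem:uniform}; since chaoticity does not depend on the choice of finite generating set (\Cref{cor:gensets}), this is enough to conclude that $G$ is chaotic. All three ingredients are already assembled: the family $w_n$ is uniformly $(\lambda_0,0)$-quasigeodesic in $\Gamma(G,S)$ by \Cref{lem:uniform}; pumping the $w_n$ via Ogden's lemma forces any context-free language containing the $(\lambda_0,0)$-quasigeodesics to also contain a word with $[a^n,b^{n^2}]$ as a subword, for all large $n$, by \Cref{lem:commutator}; and $d([a^n,b^{n^2}]) = o(n^2)$ by \Cref{lem:upperbound}. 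The only thing left is to observe that $[a^n,b^{n^2}]$, read as the word $a^{-n}b^{-n^2}a^nb^{n^2}$, has length $2n^2+2n = \Theta(n^2)$, and to pit this against its $o(n^2)$ displacement.

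Concretely, I would argue as follows. Let $L$ be any context-free language containing all $(\lambda_0,0)$-quasigeodesics of $\Gamma(G,S)$, and let $\lambda \ge 1$ and $\epsilon \ge 0$ be arbitrary. By \Cref{lem:commutator} there is a constant $N$ (depending on the Ogden constant of $L$) so that for every $n \ge N$ the language $L$ contains some word $W_n$ having the subword $[a^n,b^{n^2}] = a^{-n}b^{-n^2}a^nb^{n^2}$, which has length $2n^2+2n$. Since $\lambda$ and $\epsilon$ are fixed and $d([a^n,b^{n^2}]) = o(n^2)$ by \Cref{lem:upperbound}, the quantity $\lambda\, d([a^n,b^{n^2}]) + \epsilon$ is also $o(n^2)$, so there is $n_0$ with $\lambda\, d([a^n,b^{n^2}]) + \epsilon < 2n^2+2n$ for all $n \ge n_0$. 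Taking $n = \max(N,n_0)$, the subword $[a^n,b^{n^2}]$ of $W_n$ violates the $(\lambda,\epsilon)$-quasigeodesic inequality, so $W_n \in L$ is not $(\lambda,\epsilon)$-quasigeodesic. As $\lambda,\epsilon$ were arbitrary, $(G,S)$ is chaotic at $(\lambda_0,0)$, and hence $G$ is chaotic.

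The substantive difficulties are all in the preparatory lemmas rather than in this final assembly: \Cref{lem:uniform} requires bounding $d_S$-distances of subwords of $w_n$ by comparison with $G_{\text{ab}}$, and \Cref{lem:upperbound} extracts the subquadratic displacement bound from Hall's collecting process combined with Osin's distortion theorem. What this proof contributes is only the elementary but essential observation that a word of length $\Theta(n^2)$ whose group displacement is $o(n^2)$ must, for $n$ large enough, fail every prescribed pair of quasigeodesic constants — precisely because $\lambda$ and $\epsilon$ are chosen before $n$. The main conceptual obstacle, already addressed upstream, is the balancing of the exponents $n$ and $n^2$ in $w_n$: a rectangle that is too elongated destroys the uniform quasigeodesicity of \Cref{lem:uniform}, while one that is too close to square destroys the $o(n^2)$ bound of \Cref{lem:upperbound}, and $(n,n^2)$ sits in the sweet spot where both hold simultaneously.
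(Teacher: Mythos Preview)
Your proof is correct and follows essentially the same route as the paper's own argument: invoke \Cref{lem:uniform} for $\lambda_0$, fix a CFL $L$ containing the $(\lambda_0,0)$-quasigeodesics, use \Cref{lem:commutator} to produce words in $L$ with subword $[a^n,b^{n^2}]$, and then play the $\Theta(n^2)$ word length against the $o(n^2)$ displacement from \Cref{lem:upperbound} to defeat any fixed $(\lambda,\epsilon)$. Your presentation is slightly more explicit about choosing $n=\max(N,n_0)$, but the logic is identical.
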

\begin{proof}
  From \lemref{lem:uniform} there exists $\lambda_0$ such that 
  $w_n$ is $(\lambda_0, 0)$-quasigeodesic for all $n\ge 2$. 
  We will show that $G$ is chaotic at $(\lambda_0,0)$ (with respect to the generating set $S$).
  Let $L$ be a CFL containing all $(\lambda_0,0)$-quasigeodesics in $G$.
  By \lemref{lem:commutator}, for all sufficiently large $n$, 
  $L$ contains a word with subword $[a^n,b^{n^2}]$.
  But $d_S([a^n,b^{n^2}]) = o(n^2)$ from \lemref{lem:upperbound} and $|[a^n,b^{n^2}]| = 2n + 2n^2 = \Theta (n^2)$.
  Therefore, as $n\to\infty$,
  \[ \frac{|[a^n,b^{n^2}]|}{d_S([a^n,b^{n^2}])} \to \infty, \]
  so for any fixed $(\lambda,\epsilon)$,
  by taking $n$ large enough, we can show that $L$ contains a word that is not
  $(\lambda,\epsilon)$-quasigeodesic.
\end{proof}

\begin{corollary}
    Let $G$ be a group of polynomial growth,
    with context-free rational quasigeodesic languages. 
    Then $G$ is virtually cyclic.
\end{corollary}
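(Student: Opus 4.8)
The plan is to derive this as a quick consequence of Gromov's polynomial growth theorem together with \Cref{thm:nilpotent} and \Cref{thm:inherit}. We argue by contraposition, so suppose $G$ has polynomial growth but is \emph{not} virtually cyclic; we will show $G$ is chaotic, and hence (as recorded after \Cref{cor:gensets}) not $\Q$\textbf{CF}.

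First I would invoke Gromov's theorem: a finitely generated group of polynomial growth is virtually nilpotent, so $G$ contains a nilpotent subgroup $N$ of finite index. Since $N$ has finite index in the finitely generated group $G$, it is itself finitely generated, and as noted in \S5 a finite-index subgroup is always QI-embedded, so $N$ is undistorted in $G$. Moreover, virtual cyclicity is inherited both by and from finite-index subgroups, so $N$ --- like $G$ --- is not virtually cyclic. Thus $N$ is a finitely generated, non-virtually-cyclic nilpotent group, and \Cref{thm:nilpotent} shows $(N,T)$ is chaotic for a suitable finite generating set $T$.

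Now \Cref{thm:inherit} applies with $H=N$: since $(N,T)$ is chaotic and $N$ is undistorted in $G$, the group $(G,S)$ is chaotic for every finite generating set $S$ of $G$. Finally, to see that chaotic implies not $\Q$\textbf{CF}: if $(G,S)$ is chaotic at $(\lambda_0,\epsilon_0)$, round $\lambda_0$ up to a rational number (harmless, since being $(\lambda,\epsilon)$-quasigeodesic is monotone in $\lambda$, so chaoticity at $(\lambda_0,\epsilon_0)$ persists at $(\lambda_0',\epsilon_0)$ whenever $\lambda_0'\ge\lambda_0$); if $G$ were $\Q$\textbf{CF}, the $(\lambda_0,\epsilon_0)$-quasigeodesic language of $\Gamma(G,S)$ would be a CFL containing all $(\lambda_0,\epsilon_0)$-quasigeodesics, so by chaoticity it would contain a word that is not $(\lambda_0,\epsilon_0)$-quasigeodesic --- an absurdity. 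Hence $G$ is not $\Q$\textbf{CF}, which is the contrapositive of the claim.

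There is no serious obstacle here: all the mathematical content is in the earlier sections, and this corollary is essentially bookkeeping. The only points requiring (entirely routine) care are the citation of Gromov's theorem, the observations that finite-index subgroups are finitely generated, undistorted, and share virtual cyclicity with the ambient group, and the minor rationality adjustment needed to match the definition of $\Q$\textbf{CF}.
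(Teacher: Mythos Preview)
Your proof is correct and follows essentially the same route as the paper: contrapose, apply Gromov's theorem to get a finite-index (hence undistorted, finitely generated, non-virtually-cyclic) nilpotent subgroup, invoke \Cref{thm:nilpotent} to see it is chaotic, and then lift via \Cref{thm:inherit}. The paper's proof is terser and does not spell out the rationality adjustment for $\lambda_0$, but your extra care there is harmless and the argument is the same.
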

\begin{proof}
    Note that \thmref{thm:nilpotent} extends to non-virtually-cyclic \emph{virtually} nilpotent groups by \thmref{thm:inherit}.
    Then the corollary follows by taking the contrapositive of \thmref{thm:nilpotent}, and 
    applying Gromov's theorem on polynomial growth \cite{Gromov1981}.
\end{proof}
The hyperbolic nilpotent groups are exactly the virtually cyclic ones, so the $\Q$\textbf{CF} property is a characterization of hyperbolicity at least within the class of polynomial growth groups.

\section{Baumslag--Solitar groups}
The goal is to show that $BS(m,n)$ is chaotic for all $m, n$. 
We will use a pumping argument based on \corref{cor:ogden} as before,
but with a more involved construction of the required quasigeodesic family of words. 

\begin{definition}
    The Baumslag--Solitar group $BS(m,n)$ is defined by the presentation
    \[ BS(m,n) = \langle a,t \mid ta^m t^{-1} = a^n \rangle . \]
    We will always consider it with the generating set $\{a,t\}$.
\end{definition}

If \(m=\pm n\), then \(BS(m,n)\) contains an undistorted subgroup
isomorphic to \(\mathbb Z^2\), namely \(\langle t^2,a^m\rangle\), so these
cases follow from \thmref{thm:z2} and
\thmref{thm:inherit}. Using the isomorphisms
\(BS(m,n)\cong BS(-m,-n)\), induced by \(a\mapsto a^{-1}\), and
\(BS(m,n)\cong BS(n,m)\), induced by \(t\mapsto t^{-1}\), we may assume
that \(m\ge 1\) and \(|n|>m\). We focus on the case when $n$ is also positive, and discuss how to 
adapt the construction for negative $n$ later.
So $n > m \ge 1$; fix such $m,n$ and let $G = BS(m,n)$.

\begin{definition}\label{def:BSwords}
    Define the sequence $(d_i)$ recursively by $d_0 = 0$, $d_1 = 1$, and
    $d_{i+1} = \lceil (n/m) d_i \rceil$ for $i\ge 1$.  
    Now let $y_i = md_i$ and $z_i = nd_i$ for $i\ge 0$, 
    and $x_i = y_{i+1} - z_i$ for $i\ge 0$.
    In general, the following is true for $i\ge 1$: $y_{i+1}$ is the smallest multiple of $m$
    that is at least $z_i$, and so $0 \le x_i < m$.
    For $q\ge 1$, let
    \begin{align*}
        u_q &= a^{x_0}t^{-1}a^{x_1}t^{-1}\dots t^{-1}a^{x_{q-1}}t^{-1} \\
        v_q &= a^{-x_0}t^{-1}a^{-x_1}t^{-1}\dots t^{-1}a^{-x_{q-1}}t^{-1} 
    \end{align*}
    and define
    \[ w_q = (a^{-1} t^{2q}v_{2q} a v_{2q}^{-1})t^{-q} (a t^{2q}u_{2q} a^{-1}u_{2q}^{-1}) . \]
\end{definition}

For reference, a special case of the sequence $(d_i)$ for $m = 2$, 
$n = 3$ appears in OEIS as A061419 \cite{OEIS-A061419}.
The choice of $u_q, v_q$ is motivated by the following corollary: 
it ensures that the words $t^{2q}u_{2q}$ and $t^{2q}v_{2q}$ trace a path
that stays in a single `sheet' of the Cayley graph: see \figref{fig:BSworddraw}.

\begin{corollary}\label{cor:upvp}
    For $q\ge 1$, we have $t^{q}u_q = a^{z_q}$ and $t^{q}v_q = a^{-z_q}$ in $G$.
\end{corollary}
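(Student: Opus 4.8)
The claim is that $t^q u_q = a^{z_q}$ and $t^q v_q = a^{-z_q}$ in $G = BS(m,n)$. The plan is to prove this by induction on $q$, unwinding the telescoping structure of the words $u_q$ and the recursion $x_i = y_{i+1} - z_i = md_{i+1} - nd_i$. The one identity we need over and over is the BS-relation in the form $t a^{md} t^{-1} = a^{nd}$ for all $d \in \Z$, equivalently $a^{md} t^{-1} = t^{-1} a^{nd}$; more generally $a^{y_i} t^{-1} = t^{-1} a^{z_i}$ since $y_i = md_i$ and $z_i = nd_i$. This is the mechanism that lets us push powers of $a$ leftward through each $t^{-1}$ while multiplying the exponent by $n/m$.

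First I would record the base case. For $q = 1$ we have $u_1 = a^{x_0} t^{-1}$, and $x_0 = y_1 - z_0 = md_1 - nd_0 = m$. So $t^1 u_1 = t a^m t^{-1} = a^n = a^{z_1}$, using $z_1 = nd_1 = n$. Similarly $t v_1 = t a^{-m} t^{-1} = a^{-n} = a^{-z_1}$.

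For the inductive step, suppose $t^q u_q = a^{z_q}$. Observe that $u_{q+1} = u_q \cdot a^{x_q} t^{-1}$ by the definition (the word $u_{q+1}$ is $u_q$ with one more block $a^{x_q}t^{-1}$ appended). Hence
\[
t^{q+1} u_{q+1} = t \cdot (t^q u_q) \cdot a^{x_q} t^{-1} = t \, a^{z_q} a^{x_q} t^{-1} = t \, a^{z_q + x_q} t^{-1}.
\]
Now $z_q + x_q = nd_q + (y_{q+1} - z_q) = y_{q+1} = m d_{q+1}$, so $t \, a^{md_{q+1}} t^{-1} = a^{nd_{q+1}} = a^{z_{q+1}}$, completing the induction. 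The computation for $v_q$ is identical with every $a$-exponent negated, since $v_{q+1} = v_q \cdot a^{-x_q} t^{-1}$ and $t\, a^{-(z_q + x_q)} t^{-1} = t\, a^{-md_{q+1}} t^{-1} = a^{-z_{q+1}}$.

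There is essentially no obstacle here — the content is entirely in having set up the sequences $(d_i), (y_i), (z_i), (x_i)$ so that the telescoping works, and the proof is a routine two-line induction once one spots that $u_{q+1}$ is obtained from $u_q$ by appending a single block. The only point requiring a moment's care is confirming that the recursion is what makes $z_q + x_q$ collapse to $y_{q+1} = md_{q+1}$, a multiple of $m$, so that the BS-relation applies cleanly; this is exactly the remark in Definition \ref{def:BSwords} that $y_{i+1}$ is the least multiple of $m$ at least $z_i$.
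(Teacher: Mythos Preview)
Your proof is correct and follows essentially the same route as the paper's: induction on $q$, using $u_{q+1}=u_q\,a^{x_q}t^{-1}$ together with $z_q+x_q=y_{q+1}=md_{q+1}$ so that the BS-relation $t\,a^{md_{q+1}}t^{-1}=a^{nd_{q+1}}=a^{z_{q+1}}$ applies.
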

\begin{proof}
    We prove the statement for $u_q$; the proof for $v_q$ is similar.
    We use induction on $q$. For the base case $q = 1$, we have $tu_1 = t a^m t^{-1} = a^n = a^{z_1}$. 
    Now suppose $t^{q}u_q = a^{z_q}$ for some $q\ge 1$.
    Then 
    \begin{align*} 
    t^{q+1}u_{q+1} &= t(t^{q}u_q)a^{x_q}t^{-1} \\
    &= t a^{z_q + x_q} t^{-1} \\
    &= t a^{y_{q+1}} t^{-1} \\
    &=  a^{ny_{q+1}/m} \\
    &= a^{z_{q+1}}.\qedhere
    \end{align*}
\end{proof}

\begin{figure}[h]
\begin{tikzpicture}[
  >=Latex,
  every node/.style={font=\small},
  aedge/.style={line width=.9pt,->},
  aedgedash/.style={line width=.9pt,->,dash pattern=on 3pt off 2pt},
  tedge/.style={line width=.9pt,->},
  thinaux/.style={line width=.6pt,draw=black!55},
]

\def\H{1.25}   
\def\L{8.0}    
\def\r{0.62}   

\coordinate (L0) at (0,0);               
\coordinate (R0) at (\L,0);              
\coordinate (M0) at ({\r*\L},0);         

\coordinate (L1) at (0,\H);              
\coordinate (R1) at (\L,\H);             
\coordinate (M1) at ({\r*\L},\H);        

\coordinate (L2) at (0,-\H);              
\coordinate (R2) at (\L,-\H);             
\coordinate (M2) at ({\r*\L},-\H);        

\draw[blue,tedge] (L0) -- (L1) node[midway,left=2pt] {$t$};
\draw[blue,tedge] (L2) -- (L0) node[midway,left=2pt] {$t$};
\draw[blue,tedge] (R0) -- (R2) node[midway,right=2pt] {$t^{-1}$};
\draw[blue,tedge] (M1) -- (M0) node[midway,right=2pt] {$t^{-1}$};

\draw[red,aedge] (L1) -- (M1)
  node[midway,above=3pt] {$a^{x_{0}}=a^{m}$};

\draw[aedgedash] (L0) -- (M0)
  node[midway,above=3pt] {$a^{z_1} = a^{n}$};
\draw[red,aedge] (M0) -- (R0)
  node[midway,above=3pt] {$a^{x_{1}}$};
\draw[aedgedash] (L2) -- (R2)
  node[midway,above=3pt] {$a^{z_2}$};

\end{tikzpicture}
\caption{The word $t^2u_2 = t^2 a^{x_0} t^{-1} a^{x_1} t^{-1}$ traced in the Cayley graph of $BS(m,n)$. 
$x_1$ is, by definition, the least integer such that $y_2 = z_1+x_1$ is a multiple of $m$.}
\label{fig:BSworddraw}
\end{figure}
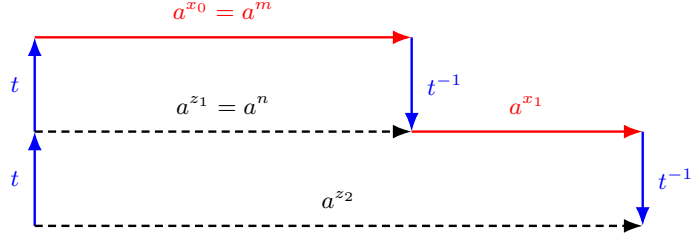

\begin{corollary}\label{cor:displacement}
    We have $t^{-q} (a t^{2q}u_{2q} a^{-1}u_{2q}^{-1}) = t^{q}$ and
    $(a^{-1} t^{2q}v_{2q} a v_{2q}^{-1})t^{-q} = t^q$ in $G$.
\end{corollary}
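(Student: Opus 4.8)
The plan is to reduce both identities to \Cref{cor:upvp} together with the elementary fact that powers of $a$ commute with one another in $G$; notably, the defining relation $t^{-1}a^mt=a^n$ is never invoked directly, as it has already been absorbed into \Cref{cor:upvp}.

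First I would apply \Cref{cor:upvp} with $2q$ in place of $q$ to obtain $t^{2q}u_{2q}=a^{z_{2q}}$ and $t^{2q}v_{2q}=a^{-z_{2q}}$, or equivalently $u_{2q}=t^{-2q}a^{z_{2q}}$ and $v_{2q}=t^{-2q}a^{-z_{2q}}$, so that $u_{2q}^{-1}=a^{-z_{2q}}t^{2q}$ and $v_{2q}^{-1}=a^{z_{2q}}t^{2q}$. For the first identity, substituting $t^{2q}u_{2q}=a^{z_{2q}}$ into the bracketed word and using $a\,a^{z_{2q}}a^{-1}=a^{z_{2q}}$ gives
\[ a\,t^{2q}u_{2q}\,a^{-1}u_{2q}^{-1} = a^{z_{2q}}u_{2q}^{-1} = a^{z_{2q}}\bigl(a^{-z_{2q}}t^{2q}\bigr) = t^{2q}, \]
and prepending $t^{-q}$ yields $t^{-q}\cdot t^{2q}=t^q$, as claimed. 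The second identity is entirely symmetric: using $a^{-1}a^{-z_{2q}}a=a^{-z_{2q}}$ and the formula for $v_{2q}^{-1}$ above, one computes $a^{-1}t^{2q}v_{2q}\,a\,v_{2q}^{-1}=a^{-z_{2q}}v_{2q}^{-1}=t^{2q}$, and appending $t^{-q}$ on the right gives $t^q$.

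I do not expect any genuine obstacle here; the only points requiring care are the bookkeeping between the indices $2q$ and $q$ and keeping track of which side $t^{-q}$ is attached to. The whole statement should follow in a few lines of manipulation inside $G$, immediately after \Cref{cor:upvp}.
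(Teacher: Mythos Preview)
Your proof is correct and essentially identical to the paper's: both substitute $t^{2q}u_{2q}=a^{z_{2q}}$ and $u_{2q}^{-1}=a^{-z_{2q}}t^{2q}$ from \Cref{cor:upvp}, cancel the powers of $a$, and multiply by $t^{-q}$. The only cosmetic difference is that the paper packages the computation as $t^{-q}\bigl(a\,t^{2q}u_{2q}\,a^{-1}\,(t^{2q}u_{2q})^{-1}\bigr)t^{2q}$ before substituting, whereas you substitute term by term.
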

\begin{proof}
    We prove the first statement; the second is symmetric.
    Using \corref{cor:upvp} to substitute for $t^{2q}u_{2q}$, we have 
    \begin{align*}
        t^{-q} (a t^{2q}u_{2q} a^{-1}u_{2q}^{-1}) &= t^{-q} (at^{2q}u_{2q}a^{-1}(t^{2q}u_{2q})^{-1})t^{2q}  \\
        &= t^{-q} (a a^{z_{2q}} a^{-1} a^{-z_{2q}})t^{2q} \\
        &= t^{q}. \qedhere
    \end{align*} 
\end{proof}

Intuitively, we would like to repeat a construction like that 
in \figref{fig:spiral}, but we run into problems due to the exponential 
collapse of $\langle a \rangle$-distances in the Cayley graph of $BS(m,n)$ 
as we climb to higher $t$-levels of the graph (so one cannot naively construct the same kind of spiral). We get around this by using 
more than one `sheet' of the Cayley graph: one sheet to travel along a subword equal 
to a large power of $a$ (the subword $t^{2q}u_{2q}$), and another subword ($a^{-1}u_{2q}^{-1}$)
to complete the analog of the spiral arm. See \figref{fig:wordtrace}.

\begin{figure}[h]
    \begin{tikzpicture}[line cap=round, line join=round, >=Latex, semithick,
                        shorten >=1pt, shorten <=1pt, every node/.style={font=\small}]
    
    \tikzset{slightwave/.style={
    decorate,
    decoration={snake, amplitude=0.07cm, segment length=12mm, pre length=0pt, post length=0pt}
    }}

    \def\xa{0}     
    \def\xb{1}     
    \def\ybot{0}   
    \def\yl{2.5}   
    \def\yu{5}     
    \def\yend{0}   
    \def\off{0.3}  

    \pgfmathsetmacro{\dx}{\yu - \yend}

    \coordinate (Lbot) at (\xa,\ybot);
    \coordinate (Ltop) at (\xa,\yl);
    \coordinate (Rbot) at (\xb,\ybot);
    \coordinate (Rtop) at (\xb,\yu);
    \coordinate (Uend) at (\xb+\dx,\yend);
    \coordinate (Lend) at (\xb+\dx-1,\yend); 

    \coordinate (BBot) at (0, -2.5);
    \coordinate (Bleft) at (-1, -2.5);
    \coordinate (Btop) at (-1, 2.5);
    \coordinate (BFleft) at (-6, -2.5);
    \coordinate (BFleftoff) at (-5, -2.5);

    \draw[<-] (Lbot) -- (Ltop) node[midway,left=1pt] {$t^{-q}$};
    \draw[->] (Rbot) -- (Rtop) node[midway,right=1pt] {$t^{2q}$};

    \draw[->] (Bleft) -- (Btop) node[midway,left=1pt] {$t^{2q}$};

    \draw[dashed,->] (\xa,\yu) -- (\xa,\yl)
        node[midway,left=3pt] {$t^{-q}$};
    
    \draw[dashed,->] (0,0) -- (BBot)
        node[midway,left=3pt] {$t^{-q}$};

    \draw[->,slightwave] (Rtop) -- (Uend)
        node[pos=0.58,above right=1pt] {$u_{2q}$};

    \draw[<-,slightwave] (\xa,\yu) -- (Lend)
        node[pos=0.55,below left=1pt] {$u_{2q}^{-1}$};

    \draw[->,slightwave] (Btop) -- (BFleft)
        node[pos=0.58,above left=1pt] {$v_{2q}$};

    \draw[->,slightwave] (BFleftoff) -- (Ltop)
        node[pos=0.55,below =10pt] {$v_{2q}^{-1}$};

    \draw[->] (Lbot) -- (Rbot) node[midway,below=3pt] {$a$};
    \draw[<-] (Lend) -- (Uend) node[midway,below=3pt] {$a^{-1}$};

    \draw[->] (BBot) -- (Bleft) node[midway,below=3pt] {$a^{-1}$};
    \draw[->] (BFleft) -- (BFleftoff) node[midway,below=3pt] {$a$};

    \end{tikzpicture}
    \caption{The word $w_q$ traced 
    on a projection of the Cayley graph of $BS(m,n)$. It appears to self-intersect only
    because of the 2D projection; the subpaths in question always lie in different sheets of the Cayley graph.}
    \label{fig:wordtrace}
\end{figure}
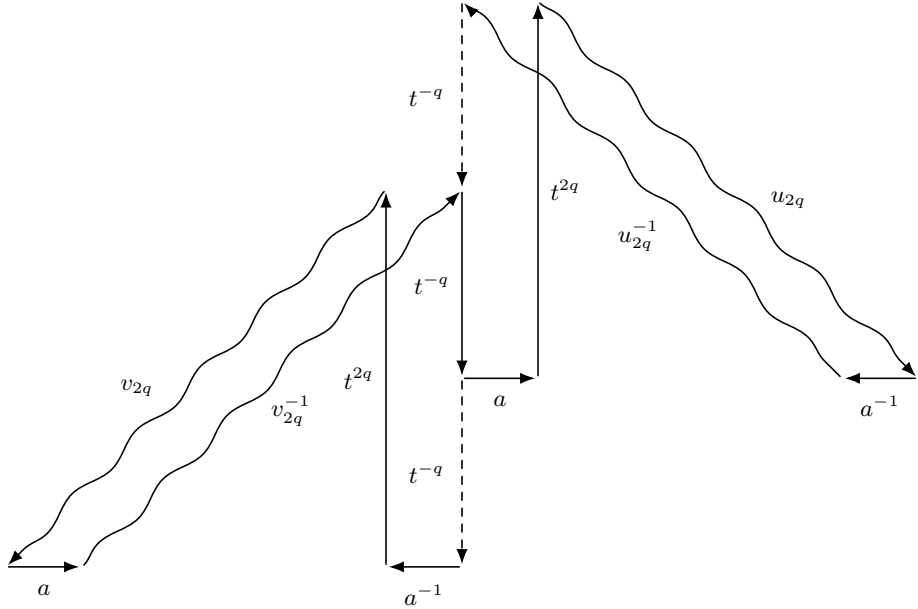

The main difficulty now is in proving that the words $w_q$ are actually uniformly quasigeodesic.

\begin{lemma}\label{lem:qg}
    There exist $(\lambda_0, \epsilon_0)$ such that for all $q\ge 1$,
    the word $w_q$ is $(\lambda_0, \epsilon_0)$-quasigeodesic in $BS(m,n)$.
\end{lemma}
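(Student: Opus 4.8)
The statement asks for constants $\lambda_0\ge1$ and $\epsilon_0\ge0$, independent of $q$, such that every subword $v$ of $w_q$ satisfies $|v|\le\lambda_0 d(v)+\epsilon_0$. The plan is to combine two lower bounds for the word length of an element $g\in G=BS(m,n)$. The first is Britton's lemma for the HNN extension $G$: the number $\ell_t(g)$ of stable-letter occurrences in a reduced form of $g$ is well defined, and $d(g)\ge\ell_t(g)$. The second is that the subgroup $\langle a\rangle$ is exponentially distorted --- since each generator alters the $\langle a\rangle$-exponent additively or multiplies it by at most $n/m$, one has $d(a^k)\ge c\log(|k|+2)$, which together with the normal-form estimates of Burillo--Elder \cite{BurilloElder2014} bounds $d(g)$ below when the reduced form of $g$ accumulates a large power of $a$. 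The key elementary observation is that every literal $a$-syllable occurring in $w_q$ has exponent of absolute value at most $m$, so any subword $v$ has $|v|=\Theta(\ell(v))$, where $\ell(v)$ counts syllables; hence it suffices to show that for every subword $v$ of $w_q$, \emph{either} the reduced form of $v$ retains $\gtrsim\ell(v)$ stable letters, \emph{or} it contains an $a$-syllable of exponent exponentially large in $\ell(v)$, since in both cases $d(v)\gtrsim\ell(v)$.

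This is verified by a case analysis on which of the blocks
\[ a^{-1}\ \mid\ t^{2q}\ \mid\ v_{2q}\ \mid\ a\ \mid\ v_{2q}^{-1}\ \mid\ t^{-q}\ \mid\ a\ \mid\ t^{2q}\ \mid\ u_{2q}\ \mid\ a^{-1}\ \mid\ u_{2q}^{-1} \]
of $w_q$ the subword $v$ meets. All pinching in $w_q$ is driven by \Cref{cor:upvp}: that corollary says exactly that the prefixes of $t^{2q}u_{2q}$ and of $t^{2q}v_{2q}$ collapse, one pinch $t\,a^{y_k}\,t^{-1}=a^{z_k}$ at a time, onto powers $a^{\pm z_k}$, with $z_{2q}=nd_{2q}$ exponentially large in $q$; a subword contained in a straddling portion of $t^{2q}u_{2q}$ (say $t^{k}$ followed by a prefix of $u_{2q}$, which equals $t^{k-j}a^{\pm c}$ with $c$ exponential in $j$) therefore lands in the second alternative when few stable letters survive, and one checks that in all regimes $|v|$ is comparable to $\max\{\ell_t(v),\log(|c|)\}\lesssim d(v)$. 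By contrast $u_{2q},v_{2q}$ themselves, and their junctions $v_{2q}\,a\,v_{2q}^{-1}=t^{-2q}at^{2q}$, $u_{2q}\,a^{-1}\,u_{2q}^{-1}=t^{-2q}a^{-1}t^{2q}$, and $t^{-q}\,a\,t^{2q}$, admit no pinch (the separating $a^{\pm1}$ blocks it, or all stable letters have the same sign), so subwords meeting only these land in the first alternative with $\ell_t\asymp\ell$. The remaining genuinely mixed case is a subword straddling the central $a$ of $a^{-1}t^{2q}v_{2q}\,a\,v_{2q}^{-1}$: using \Cref{cor:upvp} to write $v_{2q}=t^{-2q}a^{-z_{2q}}$ and $v_{2q}^{-1}=a^{z_{2q}}t^{2q}$, such a subword equals $a^{c_1}t^{-j_1}a\,t^{j_2}a^{-c_2}$ with $j_1,j_2\le\ell(v)$, the middle part $t^{-j_1}a\,t^{j_2}$ is reduced, and the flanking $a^{\pm c_i}$ create no pinch, so $\ell_t(v)=j_1+j_2=\Theta(\ell(v))$ and $d(v)\ge j_1+j_2\gtrsim|v|$. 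Subwords straddling the central $t^{-q}$, or the junctions of $t^{2q}$ with $u_{2q}$ and of $v_{2q}^{-1}$ with $t^{-q}$, are treated the same way, using \Cref{cor:displacement} to read off the reduced form after the cancellations built into the construction.

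The main obstacle is to make this bookkeeping uniform in $q$: one must control, for \emph{every} subword of $w_q$, precisely how far its HNN reduction collapses the stable letters, and this is exactly what the choice $0\le x_i<m$ in \Cref{def:BSwords} secures --- it is what makes each pinch of the form $t\,a^{y_{k+1}}\,t^{-1}$ with $y_{k+1}$ a multiple of $m$ propagate cleanly, and what keeps the two halves of $w_q$ confined to single sheets so that the stable letters of a straddling subword genuinely survive. I expect this to be where the real work lies; once the finitely many boundary cases are checked and the implied constants assembled into $\lambda_0,\epsilon_0$, the case of negative $n$ follows from the mirror-image construction indicated after the statement, interchanging the roles of $u_q$ and $v_q$ and of ascending and descending sheets.
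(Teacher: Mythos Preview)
Your plan is correct and is essentially the paper's own approach: the paper reduces \Cref{lem:qg} to \Cref{lem:constants}, which establishes $d(u)\ge C|u|-D$ for every subword by a case analysis on which of the three blocks $L=a^{-1}t^{2q}v_{2q}av_{2q}^{-1}$, $M=t^{-q}$, $R=at^{2q}u_{2q}a^{-1}u_{2q}^{-1}$ the subword meets, rewriting each case into Burillo--Elder normal form and invoking \Cref{lem:distbound} and \Cref{cor:distbound}. Your eleven-block decomposition and explicit ``either $\gtrsim\ell(v)$ stable letters survive, or an exponentially large $a$-power survives'' dichotomy is the same mechanism repackaged; one small caution is that your claim that $t^{-j_1}\,a\,t^{j_2}$ is reduced fails when $m=1$, but there the pinches simply push you into the large-$a$-power alternative, so the argument still closes.
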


We will prove this lemma by calculating a linear lower bound on the displacement of 
all subwords of $w_q$, of the form $d(u) \ge C|u| - D$, for uniform constants $C,D > 0$. 
To this end we rely on a normal form for words in $BS(m,n)$ introduced by 
Burillo and Elder in \cite{BurilloElder2014}, and a corresponding lower bound on displacement for words in 
the normal form. This is similar to the general normal form theorem for HNN extensions 
\cite{LyndonSchupp1977}.

\begin{lemma}[\cite{BurilloElder2014}]\label{lem:normalform}
    Any element $g$ of $BS(m,n)$ can be written uniquely as 
    $g = w(a,t)a^N$ where 
    \[ w(a,t) = \{ t, at, a^2t, \dots, a^{n-1}t, t^{-1}, at^{-1}, a^2t^{-1}, \dots a^{m-1}t^{-1} \}* \] 
    and $g$ is freely reduced.
\end{lemma}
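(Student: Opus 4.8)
The plan is to deduce this from the normal form theorem for HNN extensions (Britton's lemma), since $BS(m,n)=\langle a,t\mid t^{-1}a^m t=a^n\rangle$ is exactly the HNN extension of the infinite cyclic group $\langle a\rangle\cong\Z$, with stable letter $t$, along the isomorphism $\langle a^m\rangle\to\langle a^n\rangle$ sending $a^m\mapsto a^n$. The associated subgroups $\langle a^m\rangle$ and $\langle a^n\rangle$ have the coset transversals $\{1,a,\dots,a^{m-1}\}$ and $\{1,a,\dots,a^{n-1}\}$ in $\langle a\rangle$, and the syllables of $w(a,t)$ are precisely the words obtained by following a transversal element by $t$ or $t^{-1}$; thus the stated normal form is a variant of the Lyndon--Schupp normal form \cite{LyndonSchupp1977} for this HNN extension, with the coset representative placed immediately before each stable letter rather than immediately after. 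The proof splits into \emph{existence} and \emph{uniqueness}, which I would treat as follows.

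For existence I would argue by rewriting. Starting from any word $W$ over $\{a^{\pm1},t^{\pm1}\}$ representing $g$, first apply Britton reductions --- replacing $t^{-1}a^{km}t$ by $a^{kn}$ and $ta^{kn}t^{-1}$ by $a^{km}$; each step strictly lowers the number of occurrences of $t^{\pm1}$, so after finitely many steps one reaches a Britton-reduced word $a^{c_0}t^{\varepsilon_1}a^{c_1}\cdots t^{\varepsilon_k}a^{c_k}$ for the same element. Then, using the identities $a^m t=t a^n$ and $a^n t^{-1}=t^{-1}a^m$ (direct consequences of the defining relation) together with their inverses, rewrite each power of $a$ sitting immediately before a $t^{\pm1}$ as the appropriate coset representative, transporting the surplus --- which lies in the relevant associated subgroup --- past that stable letter and into the next block. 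Sweeping left to right pushes all the surplus to the right-hand end, where it collects into one trailing power $a^N$; what remains is a product of admissible syllables followed by $a^N$. One then checks that this transport never produces a cancelling pair of stable letters, since the surplus carried across each stable letter always lies in the associated subgroup that was removed by the preceding Britton reduction.

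For uniqueness I would run the van der Waerden argument. Let $\mathcal N$ be the set of admissible strings (a product of syllables followed by a power $a^N$). For each generator $x\in\{a^{\pm1},t^{\pm1}\}$ define a map $\rho_x\colon\mathcal N\to\mathcal N$ recording the effect of right-multiplying a string by $x$ and then renormalizing it as in the existence step; check that $\rho_a$ and $\rho_{a^{-1}}$, and $\rho_t$ and $\rho_{t^{-1}}$, are mutually inverse, and that the composite corresponding to the relator $t^{-1}a^m t a^{-n}$ is the identity on $\mathcal N$. Then the $\rho_x$ extend to an action $\rho$ of $BS(m,n)$ on $\mathcal N$; the string $\rho_g(\varepsilon_{\mathrm{empty}})$ obtained by acting on the empty string is an admissible representative of $g$, and reading $\rho$ along any admissible string returns that same string, so no element has two admissible representatives.

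The main obstacle is the relator check in the uniqueness step --- this is the real content of Britton's lemma --- which comes down to a short but fiddly case analysis on the sign of the last stable letter of the current string and the residue class of its trailing power. Everything else (termination of the rewriting, preservation of Britton-reducedness, and the fact that $\rho_x$ and $\rho_{x^{-1}}$ cancel) is routine once the rewriting rules above are fixed.
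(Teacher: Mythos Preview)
Your proposal is correct and is essentially the standard HNN normal-form argument that the paper itself points to. The paper does not actually prove this lemma: it is quoted from \cite{BurilloElder2014}, and the text immediately following the statement merely lists the rewriting moves (free reduction, and pushing excess powers of $a$ past each $t^{\pm1}$ leaving the appropriate residue), which is exactly your existence step. Uniqueness is not argued in the paper at all and is left to the citation, whereas you supply the van der Waerden action proof; the paper explicitly flags the connection you use, remarking that the result ``is similar to the general normal form theorem for HNN extensions'' \cite{LyndonSchupp1977}.

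One minor remark: your justification for why the left-to-right sweep cannot introduce a new pinch is correct but terse. The clean way to say it is that the surplus carried across $t^{\varepsilon_i}$ always lands in the associated subgroup determined by $\varepsilon_i$ (pushing across $t$ deposits an element of $\langle a^n\rangle$, pushing across $t^{-1}$ deposits an element of $\langle a^m\rangle$), and this is precisely the subgroup that $a^{c_i}$ is required to avoid, by Britton-reducedness, for there to be a pinch between $t^{\varepsilon_i}$ and $t^{\varepsilon_{i+1}}=t^{-\varepsilon_i}$. Hence $c_i+(\text{surplus})$ still avoids it, and no pinch is created.
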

An arbitrary word in $BS(m,n)$ can be reduced to this normal form 
by performing the following rewritings:
\begin{itemize}
    \item freely reduce the word by cancelling $tt^{-1}, aa^{-1}$;
    \item replace $a^{r\pm dn}t$ by $a^rta^{\pm dm}$ where $0 \le r < n$;
    \item replace $a^{s\pm dm}t^{-1}$ by $a^st^{-1}a^{\pm dn}$ where $0 \le s < m$.
\end{itemize}

\begin{lemma}[\cite{BurilloElder2014}]\label{lem:distbound}
    There exist constants $C,D > 0$ such that for every $g\in BS(m,n)$ written in 
    the normal form $w(a,t)a^N$, we have 
    \[ d(g)\ge C(|w| + \log_{n/m} (|N|+1)) - D . \]
    In particular, we can take $C = 1/(n+1)$ and $D = \log_{n/m}(\tfrac{mn}{n-m})$.
\end{lemma}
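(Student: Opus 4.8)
Since this is a theorem of Burillo and Elder, I would follow a streamlined version of their argument; here is the plan. Fix $g\in BS(m,n)$ with normal form $g=w(a,t)\,a^{N}$, let $\omega$ be any word in $\{a^{\pm1},t^{\pm1}\}$ representing $g$, and write $\omega=a^{k_0}t^{\varepsilon_1}a^{k_1}\cdots t^{\varepsilon_L}a^{k_L}$, so that $|\omega|=L+\sum_j|k_j|$, where $L$ is the number of occurrences of $t^{\pm1}$ in $\omega$. The aim is to lower-bound $|\omega|$; taking $\omega$ geodesic then gives the bound on $d(g)$.

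First I would show that the stable letters already account for $|w|$. The Burillo--Elder normal form is a version of the Britton (HNN) normal form for $BS(m,n)$ viewed as an HNN extension of $\Z=\langle a\rangle$ with associated subgroups $\langle a^m\rangle$ and $\langle a^n\rangle$: each syllable of $w$ carries exactly one stable letter, and the decomposition is Britton-reduced. By Britton's lemma the syllable length $|w|$ equals the $t$-length of $g$, which is the minimum of the stable-letter count over all words representing $g$; hence $L\ge|w|$, so $|\omega|\ge|w|$.

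Next I would extract the logarithmic term in $N$. Let $h(j)=\sum_{i\le j}\varepsilon_i$ be the running stable-letter exponent sum of $\omega$, and $H=\max_j|h(j)|\le L$. The defining relation $t^{-1}a^mt=a^n$ with $n>m$ means that descending to height $-k$ inflates powers of $a$ by a factor $(n/m)^{k}$ (precisely $t^{-k}a^{m^k}t^{k}=a^{n^k}$), while ascending only deflates them; propagating this through $\omega$ and tracking the contribution of each $a$-block to the base-sheet $\langle a\rangle$-coordinate yields an inequality of the shape
\[ |N|\;\le\;(n/m)^{H}\sum_j|k_j|\;\le\;(n/m)^{L}\,|\omega|. \]
An elementary estimate (comparing $\sum_j|k_j|$ with $\log_{n/m}|N|$) then gives $|\omega|\ge\tfrac13\log_{n/m}(|N|+1)-O(1)$. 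Combining with $|\omega|\ge|w|$ and averaging produces constants $C',D'>0$ with $d(g)\ge C'\bigl(|w|+\log_{n/m}(|N|+1)\bigr)-D'$. To obtain the precise values $C=1/(n+1)$ and $D=\log_{n/m}(mn/(n-m))$, one replaces the averaging with exact bookkeeping: run the three rewriting rules (free reduction, $a^{r\pm dn}t\mapsto a^{r}t\,a^{dm}$, $a^{s\pm dm}t^{-1}\mapsto a^{s}t^{-1}a^{dn}$) starting from a geodesic representative and check that no step can drive $(n+1)\ell-|w_{\mathrm{cur}}|-\log_{n/m}(|N_{\mathrm{cur}}|+1)$ below $-(n+1)D$, where $\ell$ is the current length and $w_{\mathrm{cur}},N_{\mathrm{cur}}$ the current partial normal-form data.

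The main obstacle is making the compression inequality $|N|\le(n/m)^{H}\sum_j|k_j|$ rigorous. An $a$-block executed at a nonzero height need not be a multiple of the relevant power of $m$ (resp.\ $n$), so descending it to the base sheet creates ``carries'' that must be tracked; controlling how these propagate is essentially a pseudo-radix expansion of $N$ in the non-integer base $n/m$. This is exactly what the coset-representative structure of the Burillo--Elder normal form encodes, and it is the technical heart of their length estimate — which is why one works through the explicit rewriting system rather than a single clean inequality.
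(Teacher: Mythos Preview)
The paper does not prove this lemma at all: it is quoted verbatim from Burillo--Elder \cite{BurilloElder2014} and used as a black box. So there is nothing in the paper to compare your sketch against, and for the purposes of this paper a citation suffices.

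That said, your outline is broadly the right shape for the Burillo--Elder argument, but one step is misstated. You write that ``the syllable length $|w|$ equals the $t$-length of $g$'' and conclude $L\ge|w|$, hence $|\omega|\ge|w|$. This conflates the \emph{number of syllables} in $w$ with the \emph{word length} $|w|$. A syllable $a^r t$ or $a^s t^{-1}$ carries one stable letter but up to $n-1$ copies of $a$, so the syllable count is only $\ge |w|/n$; Britton's lemma gives $L\ge|w|/n$, not $L\ge|w|$. This is not fatal --- it is precisely the source of the factor $1/(n+1)$ in the constant $C$ --- but as written your first paragraph overclaims. The second part (bounding $|N|$ by $(n/m)^H\sum_j|k_j|$ and extracting a logarithm) is the correct mechanism, and you are right that the delicate point is the carry-propagation when pushing $a$-powers through $t^{\pm1}$'s, which is exactly what the rewriting system handles.
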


\begin{corollary}\label{cor:distbound}
    Let $g = a^{N'}w(a,t)$ where $w(a,t)$ has the form described in \lemref{lem:normalform} and 
    the letters $t, t^{-1}$ appear $T$ times in $w(a,t)$. Then
    \[ d(g) \ge C_1|w| + C_2 \log \left(1 + \left( \frac{m}{n}\right)^T |N'| \right) - D_1 \]
    for some constants $C_1, C_2, D_1 > 0$. If $N' \ge 0$, these are given by
    \begin{align*}
        C_1 &:= \frac{1}{n(n+1)},  \qquad
        C_2 := \frac{1}{(n+1)\log(n/m)}, \\ 
        D_1 &:= \log_{n/m}(\tfrac{nm}{n-m}) + \frac{1}{n+1}\log_{n/m}(1 + \tfrac{n^2}{n-m}).
    \end{align*}
\end{corollary}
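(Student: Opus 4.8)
The plan is to reduce $g = a^{N'}w(a,t)$ to the Burillo--Elder normal form of \Cref{lem:normalform} and then apply the displacement estimate of \Cref{lem:distbound}, carefully tracking how the data $(|w|, T, N')$ transforms. I will write $w(a,t) = s_1 s_2 \cdots s_T$ as a product of $T$ letters $s_j = a^{i_j}t^{\epsilon_j}$ ($\epsilon_j\in\{\pm1\}$, $0\le i_j < n$) from the normal-form alphabet and push the prefix $a^{N'}$ rightward through the $s_j$ one at a time, using the rewriting rules following \Cref{lem:normalform}. After the $j$-th step one has an expression $s_1'\cdots s_j'\,a^{P_j}\,s_{j+1}\cdots s_T$ with $P_0 = N'$, where each $s_\ell'$ is again a normal-form letter carrying the same $t^{\pm1}$ as $s_\ell$ and the carry obeys $P_j = n\lfloor(P_{j-1}+i_j)/m\rfloor$ if $\epsilon_j = +1$ and $P_j = m\lfloor(P_{j-1}+i_j)/n\rfloor$ if $\epsilon_j = -1$; in particular a $t$-step multiplies the carry by roughly $n/m$ (expansion) and a $t^{-1}$-step by roughly $m/n$ (contraction). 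Setting $\tilde w := s_1'\cdots s_T'$ and $N := P_T$ gives $g = \tilde w\,a^N$, which by uniqueness in \Cref{lem:normalform} is the normal form of $g$ --- provided no $t^{\pm1}$ letters cancel during the process. This holds because at a syllable ending in $t^{+1}$ the carry $P_j$ becomes a multiple of $n$ and at one ending in $t^{-1}$ a multiple of $m$, so the residue (mod $n$, resp.\ mod $m$) of the $a$-exponent inserted between two consecutive $t$-letters is unchanged from the already-reduced word $w(a,t)$, and whether HNN-cancellation occurs there depends only on that residue.

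With the normal form in hand I will estimate its two parameters. Each $s_j'$ has length at least $1$, so $|\tilde w| \ge T$; each $s_j$ has length $i_j + 1 \le n$, so $|w| \le nT$ and hence $|\tilde w| \ge T \ge |w|/n$. For $N$, suppose first $N' \ge 0$, so every $P_j \ge 0$ and $N \ge 0$: a $t^{-1}$-step gives $P_j \ge \tfrac mn P_{j-1} - m$ and a $t$-step trivially gives $P_j \ge \tfrac mn P_{j-1} - C_0$ for a fixed $C_0 = C_0(m,n)$, so shifting by the fixed point of $x\mapsto \tfrac mn x - C_0$ turns these into $Q_j \ge \tfrac mn Q_{j-1}$, whence $N = P_T \ge (\tfrac mn)^T N' - c$ for an explicit $c$. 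A short case split --- either $(\tfrac mn)^T N'$ is at most a constant, in which case the logarithmic term is absorbed into $D_1$, or $|N|+1 \ge \tfrac14\bigl(1+(\tfrac mn)^T N'\bigr)$ --- then yields $\log_{n/m}(|N|+1) \ge \log_{n/m}\bigl(1+(\tfrac mn)^T N'\bigr) - O(1)$; for general $N'$ the same argument with $|P_j|$ in place of $P_j$ gives the analogous bound with other constants. Finally \Cref{lem:distbound}, applied to $g = \tilde w\,a^N$ with $C = \tfrac1{n+1}$ and $D = \log_{n/m}(\tfrac{mn}{n-m})$, gives $d(g) \ge C|\tilde w| + C\log_{n/m}(|N|+1) - D$; substituting $|\tilde w|\ge |w|/n$ and the logarithmic bound and rewriting $\log_{n/m} = \tfrac1{\log(n/m)}\log$ produces the claimed inequality with $C_1 = \tfrac1{n(n+1)}$, $C_2 = \tfrac1{(n+1)\log(n/m)}$, and $D_1$ the accumulated constant, and keeping the constants explicit for $N'\ge 0$ recovers the stated value of $D_1$.

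The main obstacle is the first step: one must check carefully that pushing $a^{N'}$ through $w(a,t)$ really produces a word in the normal-form alphabet with the $t^{\pm1}$-count unchanged, since it is precisely this that makes the useful bound $|\tilde w| \ge |w|/n$ available (rather than a vacuous one), and then propagate the affine-contraction estimate on the carry through all $T$ syllables with explicit constants, including the distinction between nonnegative and negative $N'$. Once that bookkeeping is done, \Cref{lem:distbound} supplies the genuine lower bound on displacement and only routine arithmetic remains.
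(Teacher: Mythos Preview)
Your approach is essentially the same as the paper's: push $a^{N'}$ rightward through $w(a,t)$ via the Burillo--Elder rewrite rules, track the carry with an affine recurrence giving $N\ge (m/n)^T N' - c$, observe that the $t$-count is unchanged so that $|\tilde w|\ge T\ge |w|/n$, and then feed both bounds into \Cref{lem:distbound}. The paper glosses over the no-cancellation check you flag as the ``main obstacle'' (it simply applies the rewrite rules and implicitly uses that they preserve the $t^{\pm1}$-pattern), and it handles the logarithmic term via concavity rather than your case split, but these are cosmetic differences; the skeleton and the constants come out the same.
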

\begin{proof}
    We give a proof for the case $N' \ge 0$. 
    The proof for $N'<0$ is identical, save for some extra bookkeeping 
    of constants. We can convert $g = a^{N'}w(a,t)$ to the normal form from \lemref{lem:normalform}
    by moving the large power $a^{N'}$ to the right via a sequence of rewrites. 
    Consider the generic rewrites $a^Xt \to a^rta^{X'}$ and $a^Xt^{-1} \to a^sta^{X'}$. 
    In the first case we have $X = r+dn$ and $X' = dm$ for some $r,d$,
    which gives $X' \ge (X-n)(m/n) = \tfrac{Xm}{n}-m$, and similarly
    $X'\ge (X-m)(n/m) = \tfrac{Xn}{m}-n$ for the second case. This gives 
    \[ X'\ge \min(\tfrac{Xn}{m}-n, \tfrac{Xm}{n}-m) \ge \tfrac{Xm}{n}-n . \] 

    Note that $a^{N'}$ may interact with intermediate powers of $a$ in the word 
    $w(a,t)$ as it is pushed to the right; however, since these intermediate powers 
    are always positive by the definition of the normal form, the lower bound 
    for $X'$ still holds (here we use that $N'\ge 0$).

    Thus, once $a^{N'}$ has been pushed through $T$ $t$'s, we have
    $a^{N'}w(a,t) \to w'(a,t)a^{N''}$. After summing the geometric series resulting 
    from iterating the lower bound for $X'$, we get
    \begin{align*}
        N'' &\ge  N'\left(\frac{m}{n}\right)^T- \frac{n^2}{n-m}\left(1-\left(\frac{m}{n}\right)\right)^T \\ 
        &\ge N'\left(\frac{m}{n}\right)^T- \frac{n^2}{n-m}.
    \end{align*}
    Also, trivially $N''\ge 0$.
    Now applying \lemref{lem:distbound} to $ w'(a,t)a^{N''}$, we have 
    \[ d(g) = d(w'(a,t)a^{N''}) \ge \frac{1}{n+1}\left( |w'|+ \log_{n/m}(1+N'')\right) - \log_{n/m}(\tfrac{mn}{n-m}) . \]
    Since at most $n$ $a$'s follow a $t$ or $t^{-1}$ in $w'(a,t)$, 
    $|w'|\ge T \ge |w|/n$. We also have $N'' \ge \max(0, N'\left(\frac{m}{n}\right)^T- \frac{n^2}{n-m})$, 
    and using the concavity of the logarithm we get 
    \begin{align*}
        d(g) \ge& \, \frac{1}{n(n+1)}|w| + \frac{1}{n+1}\log_{n/m}\left( 1 + \left(\frac{m}{n}\right)^T N'\right) \\
        &- \frac{1}{n+1}\log_{n/m}(1 + \tfrac{n^2}{n-m}) - \log_{n/m}(\tfrac{nm}{n-m}).\qedhere
    \end{align*} 
\end{proof}

\begin{lemma}\label{lem:constants}
    There are constants $C,D > 0$ such that for all subwords $u$ of $w_q$, 
    we have $d(u) \ge C|u| - D$.
\end{lemma}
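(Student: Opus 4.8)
The plan is to establish the bound $d(u)\ge C|u|-D$ by a case analysis on $u$, organized according to which of the eleven atomic blocks of $w_q$,
\[
a^{-1}\ \big|\ t^{2q}\ \big|\ v_{2q}\ \big|\ a\ \big|\ v_{2q}^{-1}\ \big|\ t^{-q}\ \big|\ a\ \big|\ t^{2q}\ \big|\ u_{2q}\ \big|\ a^{-1}\ \big|\ u_{2q}^{-1},
\]
contain the first and the last letter of $u$, with further sub-cases recording the ``pair position'' at which $u$ enters or leaves one of the chain blocks $v_{2q},v_{2q}^{-1},u_{2q},u_{2q}^{-1}$. Since $0\le x_i<m$ for $i\ge1$ (and $x_0=m$), each chain block has length $\Theta(q)$, so $|w_q|=\Theta(q)$ and every subword satisfies $|u|=O(q)$; hence it suffices to exhibit, in each case, a lower bound on $d(u)$ with uniform constants $(C,D)$.

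First I would assemble a dictionary for the group elements of the sub-blocks. From \Cref{cor:upvp} one has $u_\ell=t^{-\ell}a^{z_\ell}$ and $v_\ell=t^{-\ell}a^{-z_\ell}$ in $G$, so $u_{2q}^{-1}=a^{-z_{2q}}t^{2q}$ and $v_{2q}^{-1}=a^{z_{2q}}t^{2q}$; a prefix of $u_{2q}$ made of its first $j$ pairs equals $u_j=t^{-j}a^{z_j}$, a suffix made of its last $j$ pairs equals $u_{2q-j}^{-1}u_{2q}=a^{-z_{2q-j}}t^{-j}a^{z_{2q}}$, and likewise for $v$; prefixes or suffixes cutting inside some $a^{\pm x_i}$ differ from these by a bounded power of $a$. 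Together with \Cref{cor:displacement} this records that the three super-blocks are $a^{-1}t^{2q}v_{2q}av_{2q}^{-1}=t^{2q}$, then $t^{-q}$, then $at^{2q}u_{2q}a^{-1}u_{2q}^{-1}=t^{2q}$, so $w_q=t^{3q}$.

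Next, for each block-pair I would compute $\bar u$ explicitly using this dictionary and the defining relation of $G$ (\Cref{def:BSwords}); in every case $\bar u$ collapses to a product of boundedly many factors of the form $a^{N}$ or $t^{k}$ with $k=O(q)$ and $|N|$ at most exponential in $q$. The displacement is then bounded below by one of two mechanisms: either (i) the homomorphism $G\to\Z$ sending $t\mapsto1$, $a\mapsto0$ gives $d(\bar u)\ge\lvert(\text{$t$-exponent of }\bar u)\rvert$, which already settles every subword whose net $t$-exponent is a fixed fraction of $|u|$ --- for instance any subword lying inside a single chain block, where the number of $t^{\pm1}$ letters is at least $|u|/(m+1)-O(1)$; or (ii) when $\bar u$ is essentially of the shape $t^{k}a^{N}$ or $a^{N}t^{k}$ with $|N|\asymp(n/m)^{\ell}$, I would reduce $\bar u$ to the Burillo--Elder normal form of \Cref{lem:normalform} (or apply \Cref{cor:distbound} directly when the large power of $a$ sits at the front) and use that the syllable count $|w|$ plus $\log_{n/m}(|N|+1)$ is at least $c|u|-c'$: when $k$ is small the term $\log_{n/m}(|N|+1)\approx\ell$ supplies the bound, and when $k$ is large it is $|w|$ that does. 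Taking $C=\min_{i,j}C_{ij}>0$ and $D=\max_{i,j}D_{ij}$ over the finitely many cases (plus a constant absorbing the bounded-power truncations) then completes the argument.

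The step I expect to be the main obstacle is the bookkeeping in the cases where $u$ begins or ends inside one of the closing arms $v_{2q}^{-1}$ or $u_{2q}^{-1}$: there $\bar u$ genuinely carries an $a$-power of order $(n/m)^{\ell}$ that is only partially conjugated away by the surrounding $t$-powers, and one must check it does not collapse the displacement --- concretely, that the quantity $(m/n)^{T}\lvert N'\rvert$ appearing in \Cref{cor:distbound} stays bounded, or else that the $t$-part already forces displacement $\Omega(q)$. This is precisely why the construction routes the large-$a$-power arm $t^{2q}u_{2q}$ and its closing arm $a^{-1}u_{2q}^{-1}$ through different sheets of the Cayley graph, and confirming this ``no long, low-displacement subword'' dichotomy uniformly across all cases is the heart of the argument.
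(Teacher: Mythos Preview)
Your plan is correct and follows essentially the same strategy as the paper: a finite case analysis on the position of $u$, rewriting via \Cref{cor:upvp} into the Burillo--Elder normal form, and then invoking \Cref{lem:distbound} or \Cref{cor:distbound}, with the dichotomy between ``$t$-exponent large'' and ``residual $a$-power large'' supplying the linear lower bound in each case. The only cosmetic differences are that the paper organises the cases by the three super-blocks $L\mid M\mid R$ rather than your eleven atomic blocks, and does not separate out the $t$-exponent homomorphism as an explicit mechanism (it is implicit in the normal-form syllable count $|w|$); the paper also only spells out case (1) in detail, declaring the remaining cases similar, which matches the level of detail in your sketch.
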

\begin{proof}
    Recall that
    \[ w_q = (a^{-1} t^{2q}v_{2q} a v_{2q}^{-1})t^{-q} (a t^{2q}u_{2q} a^{-1}u_{2q}^{-1}) . \]
    Fix $q$, and write $L:= a^{-1} t^{2q}v_{2q} a v_{2q}^{-1}$, $M:= t^{-q}$, $R:= a t^{2q}u_{2q} a^{-1}u_{2q}^{-1}$. 
    Let $u$ be a subword of $w_q$; the proof is by cases on the position of $u$: 
    \begin{enumerate}
        \item $u$ lies entirely within one of $L,M,R$.
        \item $u$ crosses one boundary, either $L|M$ or $M|R$.
        \item $u$ crosses both boundaries $L|M$ and $M|R$.
    \end{enumerate}
    The proof is similar in all cases, essentially 
    by rewriting $u$ in normal form (as in \lemref{lem:normalform}) and applying \lemref{lem:distbound} or \corref{cor:distbound}
    to get the desired linear lower bound on $d(u)$. This might involve 
    considering several subcases. We give the details for case (1) only.

    \emph{Case (1):} if $u$ lies in $M$, then $u$ is a power of $t$ and actually geodesic, so any 
    $0 < C\le 1$ and $D>0$ will work. Suppose $u$ lies in $L$ (the case $u$ in $R$ is symmetric). Observe that the subword
    $t^{2q}v_{2q}$ is simply a large power of $a$ when written in normal form (see \corref{cor:upvp}). 
    We analyze \textit{how much} of this subword is contained in $u$, and rewrite $u$ in normal 
    form accordingly. Define $S[i]$, for $0\le i \le 2q$, to be the subword of $t^{2q}v_{2q}$ that is equal to $t^iv_i$.
    Let $j$ be the largest integer such that $S[j]$ is a subword of $u$. 

    Then either $u = a^{-1} t^{2q} v_{j}a^k$ for some $0 \le k < m$ or $u = t^{j} v_{j}X$ for some $X$. 
    \begin{itemize}
        \item In the first case, we have $u = a^{-1} t^{2q-j} (t^jv_{j})a^k = a^{-1} t^{2q-j} a^{-z_j+k}$ in $G$ by \corref{cor:upvp}.
        In normal form, this is $u = a^{n-1}ta^mt^{2q-j}a^{-z_j+k}$. Then applying \lemref{lem:distbound} gives 
        \[ d(u)\ge C((n+m+ 2q-j) + \log_{n/m} (|z_j-k| + 1)) - D. \]
        It's clear from the definition of $z_j$ that $z_j \ge (n/m)^{j}$ for $j\ge 1$, so 
        $\log_{n/m} (|z_j-k| + 1) \ge j - m$. Therefore, $d(u)\ge C(n+2q) - D$. 
        Since $|u| \le 1 + 2q + jm + k \le 1 + 2q+2qm + m = (2q+1)(m+1)$, there are suitable 
        constants $C', D'$ such that $d(u) \ge C'|u| - D'$.

        \item In the second case, $u = t^{j} v_{j}X$, and in fact this $X$ can be written 
        as some $w(a,t)$ satisfying the conditions of \lemref{lem:normalform}, because
        \[ v_{2q} a v_{2q}^{-1} \in \{ t, at, a^2t, \dots, a^{n-1}t, t^{-1}, at^{-1}, a^2t^{-1}, \dots a^{m-1}t^{-1} \}^*  \]
        and $X$ is a subword of this. Then in $G$, we have the equality $u = a^{-z_j} w(a,t)$ by \corref{cor:upvp}.
        Now by \corref{cor:distbound} we have 
        \[ d(u) \ge C_1|w| + C_2 \log \left(1 + \left( \frac{m}{n}\right)^T z_j \right) - D_1 \]
        for some constants $C_1, C_2, D_1 > 0$,  and where $T$ is the number of $t, t^{-1}$ letters in $w(a,t)$. 
        From $z_j \ge (n/m)^{j}$ and $|w| \ge T$, we have 
        \[ d(u)\ge C_1|w| + C_2(\max(0,j-T)) - D_1 . \]
        If $j \le 2|w|$, then $d(u) \ge C_1|w| - D_1 \ge C'|u| - D'$
        for some $C', D' > 0$ since $|u| \le |w| + jm \le |w| + 2|w|m = (2m+1)|w|$.
        Otherwise, if $j > 2|w|$, then in particular $j > 2T$ so 
        $d(u) \ge  C_2(\max(0,j-T)) - D_1 \ge (C_2/2)j - D_1$. 
        But also $|u| \le jm + |w| \le jm + j/2 = j(m+1/2)$, so again we can find constants $C', D'$ such that
        $d(u) \ge (C_2/2)j - D_1 \ge C'|u| - D'$.
    \end{itemize}
    
    By taking the minimum $C'$ and maximum $D'$ over all (finitely many) subcases, we
    get uniform $C,D > 0$ that works for case $(1)$.
\end{proof}

\begin{proof}[Proof of \lemref{lem:qg}]
    Using \lemref{lem:constants}, find $C,D$ such that 
    for all subwords $u$ of $w_q$, we have $d(u) \ge C|u| - D$.
    Then $|u| \le \tfrac{1}{C}d(u) + \tfrac{D}{C}$, so we 
    can take $(\lambda_0, \epsilon_0) = (\tfrac{1}{C}, \tfrac{D}{C})$. \qedhere
\end{proof}

\begin{theorem}
  $BS(m,n)$ for $n > m\ge 1$ is chaotic.
\end{theorem}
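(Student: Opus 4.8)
The plan is to reuse the pumping strategy of \Cref{thm:z2} and \Cref{thm:nilpotent}, with the spiral word $w_q$ of \Cref{def:BSwords} playing the role of the planar spiral. By \Cref{lem:qg} there are constants $(\lambda_0,\epsilon_0)$ such that every $w_q$ is $(\lambda_0,\epsilon_0)$-quasigeodesic in $G = BS(m,n)$, and I claim $(G,\{a,t\})$ is chaotic at $(\lambda_0,\epsilon_0)$. So let $L \subseteq \{a,t\}^*$ be any context-free language containing all $(\lambda_0,\epsilon_0)$-quasigeodesics of $\Gamma(G,\{a,t\})$, with Ogden constant $p$, and fix arbitrary $\lambda \ge 1$ and $\epsilon \ge 0$; the task is to produce a word of $L$ that is not $(\lambda,\epsilon)$-quasigeodesic.

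First I would fix $q$ large enough that $q \ge p$; a further explicit lower bound on $q$ in terms of $\epsilon$ will emerge below. Since $w_q \in L$, apply \Cref{cor:ogden} to the factorisation $w_q = \alpha\beta\gamma$ with $\alpha = a^{-1}t^{2q}v_{2q}av_{2q}^{-1}$ the left arm, $\beta = t^{-q}$ the middle block, and $\gamma = at^{2q}u_{2q}a^{-1}u_{2q}^{-1}$ the right arm; this is permitted because $|\beta| = q \ge p$. This yields a decomposition $w_q = uxzyv$ with $ux^nzy^nv \in L$ for all $n \ge 0$, and with one of $x,y$ — the one containing a marked position, and hence nonempty — lying inside $\beta = t^{-q}$. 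Assume it is $x$; the case of $y$ is handled symmetrically, interchanging ``prefix'' and ``suffix'' throughout. Exactly as in the proof of \Cref{cor:ogden}, $\alpha$ then lies entirely inside $u$ and $ux$ lies inside $\alpha\beta$, so $u = \alpha t^{-a_1}$ and $x = t^{-k}$ for some $a_1 \ge 0$ and $k \ge 1$.

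The key point is that pumping $x$ merely lengthens the middle $t$-block, and by \Cref{cor:displacement} the left arm followed by $t^{-2q}$ evaluates to the identity. Concretely, choose $n$ with $a_1 + kn \ge 2q$ (e.g. $n = 2q$); then $w_q' := ux^nzy^nv \in L$ has $\alpha t^{-2q}$ as a prefix, since $ux^n = \alpha t^{-(a_1+kn)}$. By \Cref{cor:displacement}, $\alpha t^{-q} = t^q$ in $G$, hence $\alpha t^{-2q} = t^q t^{-q} = e$, so this prefix has displacement $0$. On the other hand $|\alpha t^{-2q}| = |\alpha| + 2q \ge 8q + 2$, because $v_{2q}$ contains $2q$ letters $t^{-1}$, so $|\alpha| = 2 + 2q + 2|v_{2q}| \ge 6q + 2$. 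Thus $w_q'$ has a subword of length at least $8q+2$ and displacement $0$; choosing $q$ at the outset so that $8q + 2 > \epsilon$ makes $w_q'$ violate the $(\lambda,\epsilon)$-quasigeodesic inequality. In the symmetric case one instead uses $t^{-2q}\gamma = t^{-q}t^q = e$ from \Cref{cor:displacement}, together with $|\gamma| = 2 + 2q + 2|u_{2q}| \ge 6q+2$, to obtain a suffix $t^{-2q}\gamma$ with the same two properties. Either way $L$ contains a word that is not $(\lambda,\epsilon)$-quasigeodesic; since $\lambda,\epsilon$ were arbitrary, $(G,\{a,t\})$ is chaotic at $(\lambda_0,\epsilon_0)$, and by \Cref{cor:gensets} the group $BS(m,n)$ is chaotic.

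I expect the only real friction to be bookkeeping: reading off from \Cref{cor:ogden} that the pumpable factor sits inside the middle $t$-block, that the prefix (or suffix) it generates is precisely the arm followed by $t^{-2q}$ — which evaluates to the identity by \Cref{cor:displacement} — and that $|\alpha|,|\gamma|$ grow at least linearly in $q$, so that the resulting defect beats the fixed additive error $\epsilon$. All the genuine difficulty, namely the uniform quasigeodesicity of the $w_q$, has already been discharged in \Cref{lem:qg}.
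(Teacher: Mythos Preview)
Your argument is correct and follows essentially the same pumping strategy as the paper: apply \Cref{cor:ogden} to the middle $t^{-q}$ block of $w_q$, pump the resulting $t^{-k}$ factor so that an arm followed by $t^{-2q}$ appears as a prefix (or suffix), and use \Cref{cor:displacement} to see this subword has displacement $0$ while its length grows linearly in $q$. Your write-up is in fact slightly more careful than the paper's in a couple of places (you use $(\lambda_0,\epsilon_0)$ from \Cref{lem:qg} rather than $(\lambda_0,0)$, and write $v_{2q},u_{2q}$ consistently with \Cref{def:BSwords}), and you explicitly justify why the pumpable factor inside $\beta$ is nonempty and why $\alpha$ survives untouched; otherwise the two proofs coincide.
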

\begin{proof}
We use a very similar pumping argument to \thmref{thm:z2}.
Let $L$ be a CFL containing all $(\lambda_0,0)$-quasigeodesics in $BS(m,n)$,
with Ogden constant $p$, and let $\lambda \ge 1$, $\epsilon \ge 0$ be arbitrary.
Let $q = \lceil\max(p, \epsilon)\rceil$ and consider the word $w_q$ as defined above. Apply \corref{cor:ogden}, 
with $\alpha = (a^{-1} t^{2q}v_{2q} a v_{2q}^{-1})$, $\beta = t^{-q}$, and $\gamma = (a t^{2q}u_{2q} a^{-1}u_{2q}^{-1})$. 
This gives a decomposition $w_q = uxzyv$, where either $x$ or $y$ is fully contained in
in $\beta = t^{-q}$. Without loss of generality suppose $x$ is fully contained in $t^{-q}$, so that $x = t^{-k}$ for some $1 \le k \le p$.
After pumping,
\[ w_{q,N} = ux^Nzy^Nv = (a^{-1} t^{2q}v_{2q} a v_{2q}^{-1})t^{-q-kN}X_N \] 
for some words $X_N$; the prefix $(a^{-1} t^{2q}v_{2q} a v_{2q}^{-1})$ is not affected by the pumping. Choose 
$N \ge q/k$, so that $w_{q,N}$ has a prefix $w' = (a^{-1} t^{2q}v_{2q} a v_q^{-1})t^{-2q}$. But this subword 
is not $(\lambda, \epsilon)$-quasigeodesic, because $|w'| \ge 4mq + 7q + 2$ and $d(w') = 0$, so in particular 
$|w'| \ge 7\epsilon > \lambda \cdot 0 + \epsilon$. Since $w_{q,N} \in L$, we have shown that 
$L$ contains a word that is not $(\lambda, \epsilon)$-quasigeodesic.
\end{proof}

The case when $n$ is negative presents minor complications.
For example, the word $at^{-1}a$ is geodesic in $BS(1,2)$ but not in $BS(1,-2)$.
In analogy with the Klein bottle group $BS(1,-1)$, in some sense directions are reversed 
at adjacent $t$-levels. 
The same proofs and ideas work in this case, after slightly modifying the definition of $w_q$.
For example, replacing all $t$'s with $t^2$ in \defref{def:BSwords} is enough. Alternately, 
some signs in \defref{def:BSwords} need to be swapped.

\begin{theorem}\label{thm:BS}
    All Baumslag--Solitar groups are chaotic.
    In particular, they do not have the $\Q \emph{\bf{CF}}$ property.
\end{theorem}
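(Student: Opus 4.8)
The plan is to assemble \Cref{thm:BS} from the cases already handled, reducing the general Baumslag--Solitar group to the situation $n > m \ge 1$ covered by the previous theorem. First I would dispose of the degenerate cases: if $m = \pm n$, then $\langle t^2, a^m\rangle$ is an undistorted $\Z^2$ inside $BS(m,n)$ (a standard computation, since the relation becomes $t^{-1}a^m t = a^{\pm m}$ and conjugation by $t^2$ is trivial on $\langle a^m\rangle$), so chaoticity follows from \Cref{thm:z2} together with \Cref{thm:inherit}. Next, by replacing $a$ with $a^{-1}$ if needed and rewriting the defining relation, I would reduce to the normalised situation $m \ge 1$ and $|n| > m$, exactly as set up in the excerpt.

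The remaining work splits on the sign of $n$. The case $n > m \ge 1$ is precisely the previous theorem, so nothing more is needed there. For $n < 0$, the plan is to observe that the entire argument of \S6 goes through after a cosmetic modification of the construction in \Cref{def:BSwords}. The only place the sign of $n$ entered was in the ``sheet'' bookkeeping for the words $u_q, v_q$ and in the normal form and displacement estimates of Burillo--Elder, which are stated for general $BS(m,n)$; the geometric picture is that passing between adjacent $t$-levels now also flips a direction (as with the Klein bottle group $BS(1,-1)$), so one must either conjugate by $t$ twice at a time or swap a sign in the definition of the $x_i$. I would carry this out by redefining $w_q$ with every occurrence of $t$ replaced by $t^2$ (equivalently working in the index-two-ish ``orientation-preserving'' part), verify that \Cref{cor:upvp}, \Cref{cor:displacement}, and the linear displacement lower bound \Cref{lem:constants} still hold with the same structure of proof — the recursion $d_{i+1} = \lceil (|n|/m) d_i\rceil$ is unchanged since only $|n|/m$ appears — and then rerun the pumping argument verbatim.

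Concretely, the steps in order are: (1) handle $m = \pm n$ via an undistorted $\Z^2$ and \Cref{thm:inherit}; (2) normalise to $m \ge 1$, $|n| > m$; (3) invoke the theorem above for $n > m \ge 1$; (4) for $n < m$ [i.e.\ $n$ negative with $|n| > m$], restate \Cref{def:BSwords} with $t \mapsto t^2$, re-derive the analogues of \Cref{cor:upvp}--\Cref{cor:displacement} and \Cref{lem:qg}, and conclude chaoticity by the same Ogden-pumping argument; (5) note that chaoticity implies not-$\Q$\textbf{CF} by the discussion following \Cref{cor:gensets}.

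The main obstacle I expect is step (4): checking that the Burillo--Elder normal form and the displacement bound of \Cref{lem:distbound}/\Cref{cor:distbound} retain their shape when $n$ is negative, since the rewriting rules $a^{r \pm dn}t \to a^r t a^{dm}$ involve the actual sign of $n$ and one must make sure the intermediate powers of $a$ that the large block $a^{N'}$ is pushed through remain controlled (the proof of \Cref{cor:distbound} explicitly used $N' \ge 0$ and positivity of intermediate powers). Working with $t^2$ rather than $t$ sidesteps the orientation flip but doubles the number of $t$-letters in every subword, so the constants in \Cref{lem:constants} change; I would absorb this into the uniform $C, D$ exactly as in the positive case, the point being only that \emph{some} linear lower bound $d(u) \ge C|u| - D$ survives, which is all \Cref{lem:qg} and the final pumping argument require.
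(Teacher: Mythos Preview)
Your proposal is correct and follows essentially the same route as the paper: the paper disposes of $m=\pm n$ via the undistorted $\Z^2$ subgroup $\langle t^2,a^m\rangle$, normalises to $m\ge 1$ and $|n|>m$, proves the case $n>m\ge 1$ in detail, and then handles negative $n$ by remarking that replacing $t$ by $t^2$ in \Cref{def:BSwords} (or swapping appropriate signs) makes the same proofs go through. Your identification of the main obstacle---checking that the Burillo--Elder displacement estimates survive the sign change---is exactly the point the paper flags as a ``minor complication'' but does not spell out, so your proposal is in fact more explicit than the paper's own treatment of this step.
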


\section{Conclusion}

We can combine the main feature of the chaoticity property (\thmref{thm:inherit})
with \thmref{thm:nilpotent} and \thmref{thm:BS} to lift to overgroups 
in which nilpotent or BS groups are QI-embedded. 
\begin{theorem}\label{thm:main}
    Let $G$ be a finitely generated group, 
    containing an undistorted subgroup isomorphic to a
    non-virtually-cyclic nilpotent group, or a Baumslag--Solitar group.
    Then $G$ is chaotic, and in particular, is not $\Q \emph{\bf{CF}}$.
\end{theorem}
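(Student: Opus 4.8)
The plan is to assemble the machinery already developed; the statement is a formal consequence of three earlier results, namely \Cref{thm:nilpotent} (non-virtually-cyclic nilpotent groups are chaotic), \Cref{thm:BS} (all Baumslag--Solitar groups are chaotic), and \Cref{thm:inherit} (chaoticity passes from an undistorted subgroup to any overgroup, for every finite generating set).

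First I would let $H\le G$ be the given undistorted subgroup and fix a finite generating set $T$ for $H$. By hypothesis $H$ is either non-virtually-cyclic nilpotent or a Baumslag--Solitar group; in the former case \Cref{thm:nilpotent} shows $H$ is chaotic, and in the latter \Cref{thm:BS} does so. If one worries that \Cref{thm:nilpotent} and \Cref{thm:BS} produce chaoticity only for a particular generating set, \Cref{cor:gensets} removes this concern, since it shows chaoticity is independent of the chosen finite generating set; in particular $(H,T)$ is chaotic.

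Next, since $H$ is undistorted in $G$, the inclusion $H\hookrightarrow G$ is a quasiisometric embedding, so \Cref{thm:inherit} applies directly and yields that $(G,S)$ is chaotic for every finite generating set $S$ of $G$; in particular $G$ is chaotic. Finally, as remarked after \Cref{cor:gensets}, a chaotic group cannot be $\Q$\textbf{CF}: if every rational quasigeodesic language of $\Gamma(G,S)$ were context-free, one could take $L$ to be the $(\lambda_0,\epsilon_0)$-quasigeodesic language itself (for the chaoticity constants $(\lambda_0,\epsilon_0)$), and applying the defining property with $(\lambda,\epsilon)=(\lambda_0,\epsilon_0)$ produces a word in $L$ that is not $(\lambda_0,\epsilon_0)$-quasigeodesic, a contradiction. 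Hence $G$ is not $\Q$\textbf{CF}.

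I expect no genuine obstacle here: all the difficulty was front-loaded into proving chaoticity of the base cases in \S4--\S6 and the inheritance theorem in \S3. The only point requiring a moment's care is the generating-set bookkeeping, i.e.\ ensuring that ``$H$ is chaotic'' holds in the form needed by \Cref{thm:inherit} (which requires some finite generating set $T$ for $H$), and this is precisely what \Cref{cor:gensets} guarantees.
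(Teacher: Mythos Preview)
Your proposal is correct and matches the paper's own approach exactly: the paper does not even write out a formal proof of \Cref{thm:main}, merely stating that it follows by combining \Cref{thm:inherit} with \Cref{thm:nilpotent} and \Cref{thm:BS}, which is precisely what you do. Your extra care with \Cref{cor:gensets} and the explicit verification that chaotic implies not $\Q$\textbf{CF} are more detail than the paper provides, but entirely in the same spirit.
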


The techniques in this paper do not extend well to the general case of 
\conjref{conj:main}, 
as we have made explicit use of particular structural properties of nilpotent and Baumslag--Solitar groups.
The crux of the problem lies in the fact that there seems to be little to link the algebraic and geometric structure of an arbitrary finitely generated group to the combinatorial structure of its quasigeodesic words. 

Lastly, arguments in this paper say nothing about the minimal
$(\lambda_0, \epsilon_0)$ for which groups fail to have context-free $(\lambda_0, \epsilon_0)$-quasigeodesic 
languages, because the $\Q\bf{CF}$ property 
is too coarse for this question.
We therefore end with the following more ambitious conjecture.

\begin{conjecture}
    There exists a universal constant $\lambda_0$ such that if $G$ is a non-hyperbolic finitely generated group with generating set $S$,
    then for all
    $\lambda > \lambda_0$, the 
    $(\lambda,0)$-quasigeodesics in $\Gamma(G,S)$ do not 
    form a context-free language.
\end{conjecture}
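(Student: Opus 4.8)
The plan is to promote the \emph{chaoticity} machinery of \Cref{thm:inherit} from the specific families treated in \Cref{thm:nilpotent} and \Cref{thm:BS} to \emph{every} non-hyperbolic group, using \Cref{thm:HNS} as the point of departure. By \Cref{thm:HNS}, $G$ fails to be hyperbolic precisely when it fails to be $\Q$\textbf{REG}, so for each non-hyperbolic $(G,S)$ there already exist constants witnessing non-regularity of some quasigeodesic language. The goal would be to upgrade this qualitative non-regularity into the quantitative, uniform statement that $(G,S)$ is chaotic at $(\lambda_0,0)$ for a single $\lambda_0$ independent of $G$ and $S$. Concretely, I would reduce the conjecture to the following geometric assertion: there is a universal $\lambda_0$ such that every non-hyperbolic $(G,S)$ admits a family of words $\alpha_q\beta_q\gamma_q$, each $(\lambda_0,0)$-quasigeodesic in $\Gamma(G,S)$, with $|\beta_q|\to\infty$, and with the property that lengthening $\beta_q$ forces a subword of length $\Omega(q)$ but bounded displacement. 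Granting such a family, the pumping argument of \Cref{thm:z2} transfers verbatim: \Cref{cor:ogden} forces a pumped block inside $\beta_q$, the resulting word stays in any candidate CFL $L$, and the over-lengthened block produces an arbitrarily inefficient subword, contradicting membership in any $(\lambda,\epsilon)$-quasigeodesic language. Note that \Cref{thm:main} alone cannot reach the full conjecture, since there are non-hyperbolic groups with no undistorted nilpotent or Baumslag--Solitar subgroup; a genuinely geometric input is unavoidable.

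The substance of the proposal is therefore the construction of these uniform ``spiral'' families directly from the geometry of non-hyperbolicity, without the algebraic crutches of nilpotent or Baumslag--Solitar structure. My starting point would be the failure of the thin-triangles (equivalently, Morse stability) characterization of hyperbolicity: in a non-hyperbolic $(G,S)$ one finds, at arbitrarily large scales, quasigeodesics that backtrack, i.e.\ stray far from the geodesic joining their endpoints. Such an inefficient quasigeodesic segment is exactly the analogue of a single arm of the $\Z^2$ spiral of \Cref{fig:spiral}. The construction would splice these inefficient segments together in a self-similar, periodic fashion so that a distinguished central block $\beta_q$ ``points back'' along the path; extending $\beta_q$ then overshoots the earlier part of the word into a near-closed loop of bounded displacement, reproducing the mechanism $b^{-q}a^{2q}b^qa^{-2q}$ that drives \Cref{thm:z2}. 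The periodicity is essential: it is what would guarantee that the family is uniformly quasigeodesic across all $q$ and that $\beta_q$ can be isolated as a pumpable block, as in \Cref{def:BSwords}.

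The main obstacle, and the reason the statement is a conjecture rather than a theorem, is precisely this uniform construction in the absence of structure. Unlike the nilpotent case, where Hall's collecting process and Osin's distortion theorem give exact control of displacements after pumping, a general non-hyperbolic group offers no comparable bookkeeping: one must certify both that the spliced word is $(\lambda_0,0)$-quasigeodesic and that pumping ruins it, using only coarse geometric input. Two difficulties stand out. First, the witnesses to non-hyperbolicity supplied via \Cref{thm:HNS} are \emph{a priori} neither periodic nor endowed with an isolable middle block, so fabricating a self-similar family from them is nontrivial; controlling the quasigeodesic constant uniformly while doing so is what forces a \emph{universal} $\lambda_0$ and is the most delicate point. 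Second, and more fundamentally, defeating \Cref{thm:ogden} is genuinely stronger than defeating the regular pumping lemma: Ogden supplies \emph{two} synchronized pumping sites $x^n,y^n$, and a pushdown stack can in principle match them, so the one-sided overshoot exploited here must be arranged to survive even when an adversary is allowed to pump a second, compensating location. I expect that handling this two-sided pumping uniformly---rather than the geometric splicing alone---is the true barrier, and that progress would likely come first for intermediate classes (for instance groups containing a quasi-isometrically embedded flat, or hierarchically hyperbolic groups, where failures of Morse stability are well understood) before the fully general statement is within reach.
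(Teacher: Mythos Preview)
The statement you were asked to prove is a \emph{conjecture} in the paper, not a theorem: the paper explicitly introduces it as ``the following more ambitious conjecture'' and offers no proof. There is therefore nothing in the paper to compare your proposal against. Your submission is not a proof either---and you clearly know this, since you write ``the reason the statement is a conjecture rather than a theorem'' and devote the final paragraph to identifying the obstructions. What you have produced is a thoughtful research plan, not a proof attempt.

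As a plan it is reasonable but speculative at exactly the points that matter. Your reduction to the existence of uniform ``spiral'' families $\alpha_q\beta_q\gamma_q$ is correct in spirit: if such families existed with a universal $\lambda_0$, the Ogden argument would indeed finish the job. But the two obstacles you name are genuine and neither is addressed. First, extracting \emph{periodic} witnesses with an isolable middle block from raw non-hyperbolicity is not known; the Morse-failure witnesses one gets from thin-triangle violations are aperiodic in general, and there is no mechanism in the paper (or elsewhere, to my knowledge) for manufacturing the required self-similarity with controlled constants. Second, your observation about two-sided pumping is on target: in \Cref{thm:z2} and \Cref{thm:BS} the geometry of the specific group is what guarantees that pumping the \emph{other} site $y$ cannot compensate for the overshoot caused by pumping $x$, and you have offered no substitute for this in a general group. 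Without concrete progress on at least one of these, the plan remains a restatement of why the conjecture is hard rather than a route to proving it.
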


\bibliographystyle{alpha}  
\bibliography{references}

@misc{Gromov1987,
	author = {Gromov, M.},
	howpublished = {Essays in group theory, {Publ}., {Math}. {Sci}. {Res}. {Inst}. 8, 75-263 (1987).},
	keywords = {20F65,20F06,57M05,20F05,20F34,20F10,53A35,53C22},
	language = {English},
	title = {Hyperbolic groups},
	year = {1987},
	zbl = {0634.20015},
	zbmath = {4031953}}

@article{Gromov1981,
	author = {Gromov, Mikhael},
	date = {1981/12/01},
	date-added = {2025-11-03 17:48:24 +0000},
	date-modified = {2025-11-03 17:48:24 +0000},
	doi = {10.1007/BF02698687},
	id = {Gromov1981},
	isbn = {1618-1913},
	journal = {Publications Math{\'e}matiques de l'Institut des Hautes {\'E}tudes Scientifiques},
	number = {1},
	pages = {53--78},
	title = {Groups of polynomial growth and expanding maps},
	url = {https://doi.org/10.1007/BF02698687},
	volume = {53},
	year = {1981},
	bdsk-url-1 = {https://doi.org/10.1007/BF02698687}}

@article {HoltRees2003,
    AUTHOR = {Holt, Derek F. and Rees, Sarah},
     TITLE = {Regularity of quasigeodesics in a hyperbolic group},
   JOURNAL = {Internat. J. Algebra Comput.},
  FJOURNAL = {International Journal of Algebra and Computation},
    VOLUME = {13},
      YEAR = {2003},
    NUMBER = {5},
     PAGES = {585--596},
      ISSN = {0218-1967},
   MRCLASS = {20F67 (20F10)},
  MRNUMBER = {2027224},
MRREVIEWER = {Gilbert Levitt},
       DOI = {10.1142/S0218196703001560},
       URL = {https://doi.org/10.1142/S0218196703001560},
}

@article{Hughes_2025,
   title={Regularity of quasigeodesics characterizes hyperbolicity},
   ISSN={1473-7124},
   url={http://dx.doi.org/10.1017/prm.2024.132},
   DOI={10.1017/prm.2024.132},
   journal={Proceedings of the Royal Society of Edinburgh: Section A Mathematics},
   publisher={Cambridge University Press (CUP)},
   author={Hughes, Sam and Nairne, Patrick S. and Spriano, Davide},
   year={2025},
   month=apr, pages={1--14} }

@article{Olshanskii1991Subquadratic,
  author  = {A. Yu. Ol'shanskii},
  title   = {Hyperbolicity of Groups with Subquadratic Isoperimetric Inequality},
  journal = {International Journal of Algebra and Computation},
  year    = {1991},
  volume  = {1},
  number  = {3},
  pages   = {281--289},
  doi     = {10.1142/S0218196791000183}
}

@book{SipserTOC3,
  author    = {Michael Sipser},
  title     = {Introduction to the Theory of Computation},
  edition   = {3},
  publisher = {Cengage},
  year      = {2012}
}

@article{ogden68,
	author = {Ogden, William},
	journal = {Mathematical systems theory},
	number = {3},
	pages = {191--194},
	title = {A helpful result for proving inherent ambiguity},
	volume = {2},
	year = {1968}}

@misc{hall_edmonton_notes_2025,
  author        = {Hall, Phillip},
  editor        = {Pengitore, Mark},
  title         = {The {Edmonton} Notes on Nilpotent Groups},
  year          = {2025},
  note          = {Typed and compiled from Hall's original notes},
  eprint        = {2507.09745},
  archivePrefix = {arXiv},
  primaryClass  = {math.GR},
  doi           = {10.48550/arXiv.2507.09745},
  url           = {https://arxiv.org/abs/2507.09745}
}

@article{struik,
  author  = {Struik, R. R.},
  title   = {On Nilpotent Products of Cyclic Groups. II},
  journal = {Canadian Journal of Mathematics},
  volume  = {13},
  year    = {1961},
  pages   = {557--568},
  doi     = {10.4153/CJM-1961-045-2},
  url     = {https://www.cambridge.org/core/journals/canadian-journal-of-mathematics/article/on-nilpotent-products-of-cyclic-groups-ii/BACC77B02FEB66EE36260EFDFFCD5B09}
}

@article{osin,
  author  = {Osin, Denis V.},
  title   = {Subgroup Distortions in Nilpotent Groups},
  journal = {Communications in Algebra},
  year    = {2001},
  volume  = {29},
  number  = {12},
  pages   = {5439--5463},
  doi     = {10.1081/AGB-100107938},
  url     = {https://www.tandfonline.com/doi/10.1081/AGB-100107938}
}

@misc{Cornulier:MO:Abelianization:2020,
  author       = {Yves Cornulier},
  title        = {Answer to: Finite index subgroup with free abelianization},
  howpublished = {MathOverflow},
  year         = {2020},
  month        = may,
  day          = {20},
  url          = {https://mathoverflow.net/a/360855},
  note         = {MathOverflow answer},
}

@misc{auffinger2021asymptoticshapesstationarypassage,
      title={Asymptotic shapes for stationary first passage percolation on virtually nilpotent groups}, 
      author={Antonio Auffinger and Christian Gorski},
      year={2021},
      eprint={2110.00055},
      archivePrefix={arXiv},
      primaryClass={math.PR},
      url={https://arxiv.org/abs/2110.00055}, 
}

@misc{OEIS-A061419,
  author       = {{OEIS Foundation Inc.}},
  title        = {{The On-Line Encyclopedia of Integer Sequences}},
  howpublished = {\url{https://oeis.org/A061419}},
  year         = {2025},
  note         = {Sequence A061419; accessed 2025-11-08}
}

@article{Gersten1998,
title = {Cohomological lower bounds for isoperimetric functions on groups},
journal = {Topology},
volume = {37},
number = {5},
pages = {1031-1072},
year = {1998},
issn = {0040-9383},
doi = {https://doi.org/10.1016/S0040-9383(97)00070-0},
url = {https://www.sciencedirect.com/science/article/pii/S0040938397000700},
author = {S.M. Gersten},
abstract = {If the finitely presented group G splits over the finitely presented sub-group C, then classes are constructed in H2(∞)(G) which reflect the splitting and which serve as lower bounds for isoperimetric functions for G. It is proved that H2(∞)(G)=0 for all word hyperbolic groups G. A converse is obtained for the combination theorem for hyperbolic groups of Bestvina–Feighn. The Mayer–Vietoris exact sequence for l∞-cohomology associated to a splitting of a group is established. Metabolic groups are introduced as finitely presented groups G such that H2(∞)(G, A)=0 for all normed abelian coefficient groups A and such groups G are shown to be characterized by possessing “thin” combings.}
}

@article{Papasoglu1995,
	author = {Papasoglu, P. },
	date = {1995/12/01},
	date-added = {2025-11-02 16:50:33 +0000},
	date-modified = {2025-11-02 16:50:33 +0000},
	doi = {10.1007/BF01884301},
	id = {Papasoglu1995},
	isbn = {1432-1297},
	journal = {Inventiones mathematicae},
	number = {1},
	pages = {323--334},
	title = {Strongly geodesically automatic groups are hyperbolic},
	url = {https://doi.org/10.1007/BF01884301},
	volume = {121},
	year = {1995},
	bdsk-url-1 = {https://doi.org/10.1007/BF01884301}}

@article{ClearyElderTaback,
	abstract = {We study languages of geodesics in lamplighter groups and Thompson's group F. We show that the lamplighter groups Ln have infinitely many cone types, have no regular geodesic languages, and have 1-counter, context-free and counter geodesic languages with respect to certain generating sets. We show that the full language of geodesics with respect to one generating set for the lamplighter group is not counter but is context-free, while with respect to another generating set the full language of geodesics is counter and context-free. In Thompson's group F with respect to the standard finite generating set, we show there are infinitely many cone types and that there is no regular language of geodesics. We show that the existence of families of ``seesaw'' elements with respect to a given generating set in a finitely generated infinite group precludes a regular language of geodesics and guarantees infinitely many cone types with respect to that generating set.},
	author = {Sean Cleary and Murray Elder and Jennifer Taback},
	doi = {https://doi.org/10.1016/j.jalgebra.2005.11.016},
	issn = {0021-8693},
	journal = {Journal of Algebra},
	keywords = {Regular language, Rational growth, Cone type, Context-free grammar, Counter automata, Lamplighter groups, Thompson's group},
	note = {Computational Algebra},
	number = {2},
	pages = {476-500},
	title = {Cone types and geodesic languages for lamplighter groups and Thompson's group F},
	url = {https://www.sciencedirect.com/science/article/pii/S0021869305006502},
	volume = {303},
	year = {2006},
	bdsk-url-1 = {https://www.sciencedirect.com/science/article/pii/S0021869305006502},
	bdsk-url-2 = {https://doi.org/10.1016/j.jalgebra.2005.11.016}}

@article{groves,
	abstract = {It is shown that it is not possible to obtain a regular set of length-minimal normal forms for a class of infinite soluble groups equipped with natural generating sets. The class of groups includes the soluble Baumslag-Solitar groups with their natural generating sets.},
	author = {J.R.J. Groves},
	doi = {https://doi.org/10.1016/0022-4049(95)00165-4},
	issn = {0022-4049},
	journal = {Journal of Pure and Applied Algebra},
	number = {1},
	pages = {51-58},
	title = {Minimal length normal forms for some soluble groups},
	url = {https://www.sciencedirect.com/science/article/pii/0022404995001654},
	volume = {114},
	year = {1996},
	bdsk-url-1 = {https://www.sciencedirect.com/science/article/pii/0022404995001654},
	bdsk-url-2 = {https://doi.org/10.1016/0022-4049(95)00165-4}}

@article{ElderCounterBS,
  author    = {Murray Elder},
  title     = {A context-free and a 1-counter geodesic language for a Baumslag--Solitar group},
  journal   = {Theoretical Computer Science},
  year      = {2005},
  volume    = {339},
  number    = {2-3},
  pages     = {344--371},
  doi       = {10.1016/j.tcs.2005.03.026},
  issn      = {0304-3975},
  publisher = {Elsevier},
  url       = {https://doi.org/10.1016/j.tcs.2005.03.026},
  eprint    = {math/0411166},
  archivePrefix = {arXiv}
}

@book{LyndonSchupp1977,
  author    = {Roger C. Lyndon and Paul E. Schupp},
  title     = {Combinatorial Group Theory},
  series    = {Ergebnisse der Mathematik und ihrer Grenzgebiete},
  volume    = {89},
  publisher = {Springer-Verlag},
  address   = {Berlin},
  year      = {1977}
}

@misc{BurilloElder2014,
      title={Metric properties of Baumslag-Solitar groups}, 
      author={José Burillo and Murray Elder},
      year={2014},
      eprint={1402.3859},
      archivePrefix={arXiv},
      primaryClass={math.GR},
      url={https://arxiv.org/abs/1402.3859}, 
}

\end{document}